\crefname{equation}{}{}
\crefname{figure}{Figure}{Figures}
\crefname{assumption}{Assumption}{Assumptions}
\crefname{condition}{Condition}{Conditions}
\renewcommand\th{\textsuperscript{th}\xspace}
\newlist{myenum}{enumerate}{3}
\setlist[myenum,1]{label=\textbf{\arabic*.},
	ref  =\textbf{\arabic*.}}
\setlist[myenum,2]{label=\textbf{(\alph*)},
	ref  =\themyenumi\textbf{(\alph*)}}
\setlist[myenum,3]{label=\bfseries(\roman*),
	ref  =\themyenumii\textbf{.(\roman*)}}
\crefname{myenumi}{item}{items}
\crefname{myenumii}{item}{items}
\crefname{myenumiii}{item}{items}
\newcommand\tsup[2][2]{%
	\def\useanchorwidth{T}%
	\ifnum#1>1%
	\stackon[-.5pt]{\tsup[\numexpr#1-1\relax]{#2}}{\scriptscriptstyle\sim}%
	\else%
	\stackon[.5pt]{#2}{\scriptscriptstyle\sim}%
	\fi%
}
\definecolor{forestgreen}{rgb}{0.13, 0.55, 0.13}
\definecolor{amber}{rgb}{1.0, 0.75, 0.0}
\definecolor{bananayellow}{rgb}{.8, 0.6, 0}
\definecolor{cuhkpl}{RGB}{152,24,147}
\newcounter{comment}\setcounter{comment}{0}
\newmdtheoremenv[%
linewidth = 1pt,%
roundcorner = 10pt,%
leftmargin = 0,%
rightmargin = 0,%
backgroundcolor = green!3,%
outerlinecolor = blue!70!black,%
splittopskip = \topskip,%
ntheorem = true,%
]{theorem}{Theorem}
\newmdtheoremenv[%
linewidth = 1pt,%
roundcorner = 10pt,%
leftmargin = 0,%
rightmargin = 0,%
backgroundcolor = green!3,%
outerlinecolor = blue!70!black,%
splittopskip = \topskip,%
ntheorem = true,%
]{corollary}{Corollary}
\newmdtheoremenv[%
linewidth = 1pt,%
roundcorner = 10pt,%
leftmargin = 0,%
rightmargin = 0,%
backgroundcolor = green!3,%
outerlinecolor = blue!70!black,%
splittopskip = \topskip,%
ntheorem = true,%
]{lemma}{Lemma}
\newmdtheoremenv[%
linewidth = 1pt,%
roundcorner = 10pt,%
leftmargin = 0,%
rightmargin = 0,%
backgroundcolor = green!3,%
outerlinecolor = blue!70!black,%
splittopskip = \topskip,%
ntheorem = true,%
]{proposition}{Proposition}
\newmdtheoremenv[%
linewidth = 1pt,%
roundcorner = 10pt,%
leftmargin = 0,%
rightmargin = 0,%
backgroundcolor = blue!3,%
outerlinecolor = blue!70!black,%
splittopskip = \topskip,%
ntheorem = true,%
]{definition}{Definition}
\newmdtheoremenv[%
linewidth = 1pt,%
roundcorner = 10pt,%
leftmargin = 0,%
rightmargin = 0,%
backgroundcolor = yellow!3,%
outerlinecolor = blue!70!black,%
splittopskip = \topskip,%
ntheorem = true,%
]{condition}{Condition}
\newmdtheoremenv[%
linewidth = 1pt,%
roundcorner = 10pt,%
leftmargin = 0,%
rightmargin = 0,%
backgroundcolor = yellow!3,%
outerlinecolor = blue!70!black,%
splittopskip = \topskip,%
ntheorem = true,%
]{assumption}{Assumption}
\theoremstyle{definition}
\newmdtheoremenv[%
linewidth = 1pt,%
roundcorner = 10pt,%
leftmargin = 0,%
rightmargin = 0,%
backgroundcolor = cyan!3,%
outerlinecolor = blue!70!black,%
splittopskip = \topskip,%
ntheorem = true,%
]{example}{Example}
\theoremstyle{definition}
\newmdtheoremenv[%
linewidth = 1pt,%
roundcorner = 10pt,%
leftmargin = 0,%
rightmargin = 0,%
backgroundcolor = red!3,%
outerlinecolor = blue!70!black,%
splittopskip = \topskip,%
ntheorem = true,%
]{remark}{Remark}
	\NewDocumentCommand\DownArrow{O{2.0ex} O{black}}{%
		\mathrel{\tikz[baseline] \draw [<-, line width=0.5pt, #2] (0,0) -- ++(0,#1);}
	}
	\definecolor{mygreen}{rgb}{0,0.6,0}
	\definecolor{mygray}{rgb}{0.5,0.5,0.5}
	\definecolor{mymauve}{rgb}{0.58,0,0.82}
\tiny\color{mygray}, 
	\newcommand{\real}{\mathbb{R}}
	\newcommand{\T}{\intercal}
	\DeclareMathOperator*{\argmin}{arg\,min}
	\newcommand {\dotprod}[1]{\left\langle #1\right\rangle}
	\newcommand {\vnorm}[1]{\left\| #1\right\|}
	\newcommand {\abs}[1]  {\left| #1 \right|}
	\newcommand {\rank}  {\textnormal{Rank}}
	\newcommand {\range}  {\textnormal{Range}}
	\newcommand {\Span}  {\textnormal{Span}}
	\newcommand {\zero}   {\mathbf{0}}
	\newcommand {\one}   {\mathbf{1}}
	\renewcommand {\AA}  {\mathbf{A}}
	\renewcommand {\aa}   {\mathbf{a}}
	\newcommand {\BB}  {\mathbf{B}}
	\newcommand {\bb}   {\mathbf{b}}
	\newcommand {\CC}   {\mathbf{C}}
	\newcommand {\DD}  {\mathbf{D}}
	\newcommand {\dd}   {\mathbf{d}}
	\newcommand {\ee}   {\mathbf{e}}
	\newcommand {\eye}  {\mathbf{I}}
	\newcommand {\pp}   {\mathbf{p}}
	\newcommand {\QQ}  {\mathbf{Q}}
	\newcommand {\RR}  {\mathbf{R}}
	\newcommand {\wRR}  {\widetilde{\RR}}
	\newcommand {\rr}   {\mathbf{r}}
	\newcommand {\rrk}  {{\rr}_{k}}
	\newcommand {\TT}  {\mathbf{T}}
	\newcommand {\TTt}  {\mathbf{T}_{t}}
	\newcommand {\wTT}  {\widetilde{\TT}}
	\newcommand {\ttt}  {\mathbf{t}}
	\newcommand {\VV}  {\mathbf{V}}
	\newcommand {\vv}   {\mathbf{v}}
	\newcommand {\ww}   {\mathbf{w}}
	\newcommand {\xx}   {\mathbf{x}}
	\newcommand {\xxk}  {{\xx}_{k}}
	\newcommand {\yy}   {\mathbf{y}}
	\newcommand {\zz}  {\mathbf{z}}
	\newcommand{\hf}{\frac12}
	\newcommand{\defeq}{\triangleq}
\begin{document}
	
	\title{MINRES: From Negative Curvature Detection to Monotonicity Properties}
	
	\author{Yang Liu\thanks{School of Mathematics and Physics, University of Queensland, Australia. Email: \tt{yang.liu2@uq.edu.au}}
		\and
		Fred Roosta\thanks{School of Mathematics and Physics, University of Queensland, Australia, and International Computer Science Institute, Berkeley, USA. Email: \tt{fred.roosta@uq.edu.au}}
	}
	
	\date{\today}
	\maketitle
	
	\abstract{The conjugate gradient method (CG) has long been the workhorse for inner-iterations of second-order algorithms for large-scale nonconvex optimization. Prominent examples include line-search based algorithms, e.g., Newton-CG, and those based on a trust-region framework, e.g., CG-Steihaug. This is mainly thanks to CG's several favorable properties, including certain monotonicity properties and its inherent ability to detect negative curvature directions, which can arise in nonconvex optimization. 
		This is despite the fact that the iterative method-of-choice when it comes to real symmetric but potentially indefinite matrices is arguably the celebrated minimal residual (MINRES) method. However, limited understanding of similar properties implied by MINRES in such settings has restricted its applicability within nonconvex optimization algorithms. We establish several such nontrivial properties of MINRES, including certain useful monotonicity as well as an inherent ability to detect negative curvature directions. These properties allow MINRES to be considered as a potentially superior alternative to CG for all Newton-type nonconvex optimization algorithms that employ CG as their subproblem solver. 
	}

	\section{Introduction}
\label{sec:intro}
Consider the linear least-squares problem 
\begin{align}
	\label{eq:ls}
	\min_{\xx \in \real^{d}} \vnorm{\AA \xx - \bb}^{2},
\end{align}
where $ \AA \in \real^{d \times d} $ is a symmetric but potentially indefinite and/or singular matrix and $ \bb \in \real^{d} $. Clearly, \cref{eq:ls} includes, as a special case, the symmetric linear system $\AA \xx = \bb$. Among many iterative algorithms designed for such a setting, e.g., \cite{golub2012matrix,paige1975solution,freund1994new,choi2011minres}, the minimum residual method (MINRES), introduced in the seminal work of Paige and Saunders \cite{paige1975solution}, is arguably the preferred Krylov subspace method. Even when $ \AA $ is positive definite, it has been shown that MINRES enjoys several advantageous properties that allow it to be considered a superior alternative to the primary workhorse, the conjugate gradient method (CG). For example, on positive definite system, it has been shown in \cite{fong2012cg} that, just like CG, not only does MINRES have monotonicity properties in terms of the iterate norms, but also both the Euclidean and the energy norms of the error are monotonic. However, in sharp contrast to CG, MINRES also provides monotonicity in the residual (and more general backward errors). In this light, when the stopping rule is based on the residual (or backward errors), \cite{fong2012cg} demonstrate that MINRES can often converge orders of magnitude faster than CG.

In the context of optimization of a twice continuously differentiable function $ f:\real^{d}\to \real $, subproblems of the form \cref{eq:ls} arise often where $ \AA $ and $ \bb $ are related to the Hessian and gradient of $ f $, and $ \xx $ is a direction for updating the optimization iterate, e.g., Newton's direction is given with $ \AA = \nabla^{2} f(\ww) $ and $ \bb = -\nabla f(\ww) $.
In such settings, the superior performance of MINRES over CG has further been verified in \cite{roosta2018newton,liu2021convergence}, where a variant of Newton's method employing MINRES as inner-solver, called Newton-MR, has been shown to greatly outperform the classical Newton-CG \cite{nocedal2006numerical} for optimization of strongly convex problems.

Despite such theoretical and empirical observations, the application of MINRES as subproblem solver for nonconvex Newton-type optimization methods has been very limited; an exception is \cite{roosta2018newton}, where nonconvexity is limited to invex optimization problems. This is in sharp contrast to CG, which has historically been used to calculate the update direction within many line-search and trust-region variants of second-order nonconvex optimization algorithms \cite{nocedal2006numerical,conn2000trust}. 
Beyond certain implied monotonicity properties, this is greatly thanks to CG's ability to naturally detect available nonpositive curvature (NPC) directions, i.e., some $ \vv \in \real^{d}, \vv \neq \zero  $, for which $ \dotprod{\vv,\AA\vv} \leq 0 $. Such directions provide a valuable tool within the optimization iterations to avoid entrapment in undesirable saddle points. 
In fact, beyond CG, all conjugate direction methods such as conjugate residual (CR) \cite{stiefel1955relaxationsmethoden,saad2003iterative} involve iterations that allow for ready access to NPC directions, when they arise. Such inherent ability has prompted researchers to design nonconvex Newton-type optimization algorithms, which leverage conjugate-direction methods as subproblem solvers and, by nontrivial use of NPC directions, come equipped with favorable convergence properties, e.g., \cite{dahito2019conjugate,royer2020newton,curtis2021trust,o2021log,xie2021complexity01,xie2021complexity02}. 

However, due to complicated dynamics of its iterations, similar properties for MINRES have not been available. In this paper, we set out to shed more light on MINRES and its underlying properties when applied to symmetric but potentially indefinite and/or singular systems. In doing so, not only do we establish several desirable monotonicity properties of MINRES, which mimic and at times improve those of CG, but also we show that MINRES indeed comes equipped with natural ability to detect NPC directions. It is hoped that such properties pave the way for further adoption of MINRES, as a potentially superior alternative to CG, within the class of nonconvex Newton-type optimization algorithms.

The rest of the paper is organized as follows. We end this section by introducing our notation and definitions as well as highlighting our main contributions. In \cref{sec:recall}, we briefly review MINRES and all its ingredients. We then provide our theoretical analysis of the properties of MINRES in \cref{sec:main}. This includes its inherent ability to detect NPC directions (\cref{sec:nc}) and several other useful properties such as monotonicity (\cref{sec:monotonicity}). Conclusion and further thoughts are gathered in \cref{sec:conclusion}.

\subsubsection*{Notation and definitions}
\label{sec:notation}
Throughout the paper, vectors and matrices are denoted by bold lower-case and bold upper-case letters, respectively, e.g., $ \bb $ and $ \AA $. 
We use regular lower-case letters to denote scalar constants, e.g., $ d $. 
For two real vectors $ \vv,\ww $, their inner-product is denoted by $ \dotprod{\vv, \ww}  = \vv^{\T} \ww $. For a vector $\vv$, its Euclidean norm is denoted by $ \|\vv\| $. 
The iteration counter for the main algorithm appears as a subscript. 
The zero vector is denoted by $\bm{0}$, while $ \ee_{j} $ denotes the $ j\th $ column of the identity matrix. 
The zero matrix is denoted by $ \zero $, while the identity matrix of dimension $ k \times k $ is denoted by $ \eye_{k} $. 
$\AA \succeq \zero$ indicate that the matrix $ \AA $ is positive semi-definite, while $ \AA \succ \zero $ denotes positive definiteness. 
Also, for a symmetric matrix $ \AA $, we denote $ \vnorm{\xx}_{\AA} = \xx^{\T} \AA \xx $ (this is an abuse of notation since unless $ \AA \succ \zero $, this does not imply a norm).
For any $ k\geq 1 $, $\mathcal{K}_{k}(\AA, \bb) = \Span\left\{\bb,\AA\bb,\ldots,\AA^{k-1}\bb\right\}$ denotes the Krylov subspace of degree $ k $ generated using $ \bb $ and $ \AA $.
The residual vector at iteration $ k $ is denoted by $ \rr_{k} = \bb - \AA \xx_{k} $. 

\begin{remark}[Initialization]
	\label{rem:initialize}
	For simplicity, we assume that $ \xx_{0} = \zero $ and hence $ \mathcal{K}_{k}(\AA, \rr_{0}) = \mathcal{K}_{k}(\AA, \bb) $. The arguments and derivations can be accordingly modified to account for a nonzero initialization.
\end{remark} 

Motivated by optimization applications where $ \AA $ is typically the Hessian encoding the geometry of the optimization landscape, \cref{def:nc} describes what we refer to as a nonpositive curvature direction. This simply amounts to any direction that lies in the eigenspace corresponding to the nonpositive eigenvalues of $ \AA $.
\begin{definition}[Nonpositive Curvature Direction]
	\label{def:nc}
	Any $ \vv \in \real^{d}, \; \vv \neq \zero  $ for which $ \dotprod{\vv,\AA\vv} \leq 0 $ is called a nonpositive curvature direction.
\end{definition} 

Clearly, the interplay between $ \AA $ and $ \bb $ is a critical factor in the convergence of MINRES and many other iterative solvers. This interplay is entirely characterized by the notion of the grade of $ \bb $ with respect to $ \AA $ as given in \cref{def:grade_b}; see \cite{saad2003iterative} for more details.
\begin{definition}
	\label{def:grade_b}
	The grade of $ \bb $ with respect to $ \AA $ is the positive integer $ g \in \mathbb{N} $ such that 
	\begin{align*}
		\text{dim}\left(\mathcal{K}_{k}(\AA, \bb)\right) = \begin{cases}
			k, &\quad k \leq g, \\
			g, &\quad k > g.
		\end{cases}
	\end{align*}
\end{definition}

\subsubsection*{Contributions}
Our main results can be summarized informally as follows.

\begin{informal*}
	\begin{enumerate}[label = \textbf{Property (\Roman*)}, wide]
		\item \label{item:nc} 
		As part of the MINRES iterations for solving \cref{eq:ls}, one can readily track a certain quantity without any additional cost (NPC Condition \cref{eq:c_gama_nc}) to determine if a nonpositive curvature is available at iteration $ k $. In this case, $ \rr_{k-1} $ (the residual vector of the \emph{previous} iteration) can be declared as such a direction (\cref{thm:determinant_T}). 
		\item \label{item:cert}
		Under a certain mild condition on the grade of $ \bb $ with respect to $ \AA $, when no nonpositive curvature direction is detected for all iterations, MINRES provides a certificate for positive semi-definiteness of $ \AA $ (\cref{thm:A_PSD}).
		
		\vspace{2mm}
		Moreover, as long as nonpositive curvature has not been detected, the MINRES iterates $ \{\xx_{k}\}_{k=1} $ have the following properties (\cref{thm:A_norm,thm:xTr_monotonicity}), in addition to several others to be detailed later:
		\item \label{item:descent} $\dotprod{\bb,\xx_{k}} > \dotprod{\xx_{k}, \AA \xx_{k}}$.
		\item \label{item:quadratic} $m(\xx_{k}) < m(\xx_{k-1})$, where $m(\xx) \triangleq \dotprod{\xx, \AA \xx}/2 - \dotprod{\bb,\xx}$.
		\item \label{item:norm} $\vnorm{\xx_{k}} > \vnorm{\xx_{k-1}}$.
	\end{enumerate}
\end{informal*}

\begin{remark}
	Consider an unconstrained optimization problem $ \min_{\ww \in \real^{d}} f(\xx) $ with a twice continuously differentiable objective $ f:\real^{d}\to \real $, and let $ \AA = \nabla^{2} f(\ww) $ and $ \bb = - \nabla f(\ww)$.
	Several observations can be immediately made to support our position in advocating the use of MINRES as a good subproblem solver within many Newton-type optimization algorithms. 
	\begin{itemize}
		\item At iteration $ k $ of MINRES, the residual vector $ \rr_{k-1} $ and quadratic value $ \dotprod{\rr_{k-1},\AA \rr_{k-1}} $ are both readily available without additional computational cost (\cref{lemma:rAr}). \labelcref{item:nc} guarantees that if a NPC direction is available at iteration $ k $, we must have $ \dotprod{\rr_{k-1},\AA \rr_{k-1}} \leq 0 $. That is, $ \rr_{k-1} $ is one such direction that can be readily used within the optimization algorithm. 
		\item Let $ f $ be a strongly convex function for which we have $\AA \succ \zero$. The iterates of CG in this case can be defined as 
		\begin{align*}
			\xx_{k} = \argmin_{\xx \in \mathcal{K}_{k}(\AA, \bb)} \; \hf \dotprod{\xx, \AA \xx} - \dotprod{\bb,\xx}.
		\end{align*}
		Since $\zero \in \mathcal{K}_{k}(\AA, \bb)$, the optimal value of the above quadratic must be nonpositive. This implies that $\dotprod{\bb,\xx_{k}} \geq \dotprod{\xx_{k}, \AA \xx_{k}}/2 > 0$.
		In other words, in Newton-CG applied to a strongly convex problem, all CG iterations amount to descent directions, i.e., $ \dotprod{\xx_{k},\nabla f(\ww)} < 0, \; \forall k $. From \labelcref{item:descent}, we see that not only does every iteration of MINRES amount to a descent direction, but also MINRES can produce a better direction in that the amount of descent can be greater than the equivalent direction obtained from CG. 
		\item \labelcref{item:quadratic,item:norm} of CG are the main driving force for its widespread use within trust-region algorithms, e.g., CG-Steihaug \cite{steihaug1983conjugate,conn2000trust}. Since MINRES also satisfies these properties, it can now be considered a viable alternative to CG for approximately solving the subproblems of the trust-region framework. 
	\end{itemize}
\end{remark}

\section{MINRES: Review}
\label{sec:recall}
For the sake of completeness, we recall the MINRES algorithm in some detail and introduce further notation that is used as part of our results. More details can be found in the original work \cite{paige1975solution} and many textbooks on the topic, e.g., \cite{bjorck2015numerical,demmel1997applied,golub2012matrix}. 

Recall that MINRES iterations, at a high level, can be written as 
\begin{align*}
	\xx_{k} = \argmin_{\xx \in \mathcal{K}_{k}(\AA, \bb)} \vnorm{\bb - \AA \xx}.
\end{align*}
At its core, MINRES involves three major ingredients, as depicted in \cref{alg:MINRES}: Lanczos process, QR decomposition, and update of the iterate.

\subsubsection*{Lanczos process}
With $ \vv_{1} = \bb/\vnorm{\bb} $, recall that after $k$ iterations of the Lanczos process, and in the absence of round-off errors\footnote{For our theoretical derivations here, we assume the absence of round-off errors. Of course, in practice, such errors will result in a loss of orthogonality in the vectors generated by the Lanczos process, which may be safely ignored in many applications. However, if strict orthogonality is required, it can be enforced by a reorthogonalization strategy.}, the Lanczos vectors form an orthogonal matrix $ \VV_{k+1} = \begin{bmatrix} \vv_1  & \vv_{2} & \dots &  \vv_{k+1} \end{bmatrix} \in \real^{d \times (k+1)} $, whose columns span $\mathcal{K}_{k+1}(\AA, \bb)$, and satisfy 
\begin{align}
	\label{eq:AV}
	\AA \VV_{k} = \VV_{k+1} \wTT_{k},
\end{align}
where $ \wTT_{k} \in \real^{(k+1) \times k}$ is an upper Hessenberg matrix of the form 
\begin{align}
	\label{eq:tridiagonal_T}
	\wTT_{k} &= 
	\begin{bmatrix}
		\alpha_1 & \beta_2 & & & \\
		\beta_2 & \alpha_2 & \beta_3 & & \\
		& \beta_3 & \alpha_3 & \ddots & \\
		& & \ddots & \ddots& \beta_{k} \\
		& & & \beta_{k} & \alpha_{k} \\
		\hdashline
		& & & & \beta_{k+1} \\
	\end{bmatrix}
	\triangleq
	\begin{bmatrix}
		\TT_{k} \\
		\beta_{k+1}\ee^{\T}_{k}
	\end{bmatrix}. 
\end{align}
Subsequently, we get the three-term recursion
\begin{align*}
	\AA \vv_{k} = \beta_{k} \vv_{k-1} + \alpha_{k} \vv_{k} + \beta_{k+1} \vv_{k+1}, \qquad k \geq 2.
\end{align*}
In this light, every iteration of MINRES requires exactly one matrix-vector product.
If $\xx_{k} = \VV_{k}\yy_{k}$ for some $ \yy_{k} \in \real^{k} $, the residual $ \rr_{k} $ can be written as 
\begin{align*}
	\rr_{k} = \bb - \AA \xx_{k} = \bb - \AA \VV_{k} \yy_{k} = \bb - \VV_{k+1} \wTT_{k} \yy_{k} = \VV_{k+1}(\vnorm{\bb} \ee_1 - \wTT_{k} \yy_{k}).
\end{align*}
This gives the well-known subproblems of MINRES as
\begin{align}
	\label{eq:MINRES_sub}
	\min_{\yy_{k} \in \real^{k}} \vnorm{\beta_{1} \ee_1 - \wTT_{k} \yy_{k}}, \qquad \beta_{1} = \vnorm{\bb}.
\end{align}

Recall that the grade of $ \bb $ with respect to $ \AA $, i.e., $ g $ in \cref{def:grade_b}, determines the maximum rank that the triangular matrix $ \TT_k $ can achieve, namely $ \max \left\{\rank(\TT_{k}); \; k \geq 0\right\} \leq g, \; 1 \leq k \leq d $. It is also well-known that, in exact arithmetic, the Lanczos process encounters the ``lucky breakdown'' after exactly $ g $ iterations, i.e., $ \beta_{g+1} = 0 $, in which case MINRES returns a solution to \cref{eq:ls}.

\subsubsection*{QR decomposition}
\label{sec:QR}
Recall that \cref{eq:MINRES_sub} is solved using the QR factorization of $ \wTT_{k} $. 
Let $ \QQ_{k} \wTT_{k} = \wRR_{k} $ be the full QR decomposition\footnote{For the sake of notational simplicity and to avoid overusing the ``transpose'' operator in the discussion that follows, we have given the full QR factorization of $ \wTT_{k} $ in a more unconventional way where the Q factor is on the left.} of $ \wTT_{k} $, where $ \QQ_{k} \in \real^{(k+1) \times (k+1)} $ and $ \wRR_{k} \in \real^{(k+1) \times k} $. Typically, $\QQ_{k}$ is formed, implicitly, by the application of a series of $ 2 \times 2 $ reflections to transform $ \wTT_{k} $ to the upper-triangular matrix $ \wRR_{k} $. Each reflection affects only two rows of the matrix being triangularized. More specifically, two successive reflections can be compactly written by considering the elements of the matrix that are being affected: 
\begin{align*}
	\begin{bmatrix}
		1 & 0 & 0 \\
		0 & c_{i-1} & s_{i-1} \\
		0 & s_{i-1} & -c_{i-1}
	\end{bmatrix} \begin{bmatrix}
		c_{i-2} & s_{i-2} & 0\\
		s_{i-2} & -c_{i-2} & 0 \\
		0 & 0 & 1 
	\end{bmatrix} 
	\begin{bmatrix}
		\gamma_{i-2}^{(1)} & \delta_{i-1}^{(1)} & 0 & 0 \\
		\beta_{i-1} & \alpha_{i-1} & \beta_{i} & 0 \\
		0 & \beta_{i} & \alpha_{i} & \beta_{i+1}
	\end{bmatrix}&
	\nonumber \\ 
	= \begin{bmatrix}
		1 & 0 & 0 \\
		0 & c_{i-1} & s_{i-1} \\
		0 & s_{i-1} & -c_{i-1}
	\end{bmatrix} \begin{bmatrix}
		\gamma_{i-2}^{(2)} & \delta_{i-1}^{(2)} & \epsilon_{i} & 0 \\
		0 & \gamma_{i-1}^{(1)} & \delta_{i}^{(1)} & 0 \\
		0 & \beta_{i} & \alpha_{i} & \beta_{i+1} \\
	\end{bmatrix}& \nonumber \\ 
	= \begin{bmatrix}
		\gamma_{i-2}^{(2)} & \delta_{i-1}^{(2)} & \epsilon_{i} & 0 \\
		0 & \gamma_{i-1}^{(2)} & \delta_{i}^{(2)} & \epsilon_{i+1} \\
		0 & 0 & \gamma_{i}^{(1)} & \delta_{i+1}^{(1)} \\
	\end{bmatrix}&,
\end{align*}
where $ 3 \leq i \leq k-1 $ and 
\begin{align}
	\label{eq:c_s_r2}
	c_{j}   = \frac{\gamma_{j}^{(1)}}{\gamma_{j}^{(2)}}, \quad s_{j} = \frac{\beta_{j+1}}{\gamma_{j}^{(2)}}, \quad \gamma_{j}^{(2)} = \sqrt{(\gamma_{j}^{(1)})^2 + \beta_{j+1}^2} = c_{j} \gamma_{j}^{(1)} + s_{j} \beta_{j+1}, \quad \quad 1 \leq j \leq k.
\end{align}
Here, the $ 2 \times 2 $ submatrix made of $c_{j}$ and $s_{j}$ is the special case of a Householder reflector in dimension two \cite[p.\ 76]{trefethen1997numerical}.

Consequently, we can rewrite $\QQ_{k}$ and $ \wRR_{k} $ in block form as 
\begin{subequations}
	\label{eq:T=QR}
	\begin{align}
		\QQ_{k} \wTT_{k} = \wRR_{k} \triangleq 
		\begin{bmatrix}
			\RR_{k}\\
			\zero^{\T}
		\end{bmatrix}, & \quad 
		\RR_{k} \triangleq 
		\begin{bmatrix}
			\gamma_{1}^{(2)} & \delta_2^{(2)} & \epsilon_3 & & & \\
			& \gamma_2^{(2)} & \delta_3^{(2)} & \epsilon_4 & &\\
			& & \gamma_3^{(2)} & \ddots & \ddots & \\
			& & & \ddots & \ddots & \epsilon_{k} \\
			& & & & \gamma_{k-1}^{(2)}  & \delta_{k}^{(2)} \\
			& & & & & \gamma_{k}^{(2)}
		\end{bmatrix}, \label{eq:block_R_tilde}\\
		\QQ_{k} \triangleq \QQ_{k,k+1} \begin{bmatrix}
			\QQ_{k-1} & \\
			& 1
		\end{bmatrix}, & \quad \QQ_{k,k+1} \triangleq 
		\begin{bmatrix}
			\eye_{k-1} & & \\
			& c_{k} & s_{k}\\
			& s_{k} & -c_{k}
		\end{bmatrix}&. \label{eq:block_Q} 
	\end{align}
\end{subequations}
In fact, the same series of transformations are simultaneously applied to $\beta_{1} \ee_1$ as 
\begin{align*}
	\QQ_{k}\beta_1\ee_1 = \beta_1
	\begin{bmatrix}
		c_1\\
		s_1 c_2\\
		\vdots\\
		s_1s_2\dots s_{k-1}c_{k}\\
		s_1s_2\dots s_{k-1}s_{k}
	\end{bmatrix} \triangleq
	\begin{bmatrix}
		\tau_1\\
		\tau_2\\
		\vdots\\
		\tau_{k}\\
		\phi_{k}
	\end{bmatrix} \triangleq \begin{bmatrix}
		\ttt_{k} \\
		\phi_{k}
	\end{bmatrix}.
\end{align*}

With these quantities available, we can solve \cref{eq:MINRES_sub} by noting that
\begin{align*}
	\min_{\yy_{k} \in \real^{k}}  \vnorm{\rr_{k}} &= \min_{\yy_{k} \in \real^{k}} \vnorm{\beta_1\ee_1 - \wTT_{k}\yy_{k}}   = \min_{\yy_{k} \in \real^{k}} \vnorm{\QQ_{k}\beta_1\ee_1 - \QQ_{k} \wTT_{k} \yy_{k}} = \min_{\yy_{k} \in \real^{k}} \vnorm{\begin{bmatrix}
			\ttt_{k}\\
			\phi_{k}
		\end{bmatrix} - 
		\begin{bmatrix}
			\RR_{k} \\
			\zero^{\T}
		\end{bmatrix}\yy_{k}},
\end{align*}
so that $\RR_{k} \yy_{k} = \ttt_{k}$ and $ \| \rr_{k} \| = \phi_{k} $ (note that, by construction, $ \phi_{k} $ remains non-negative throughout the iterations.) We also trivially have $ \phi_{0} = \beta_{1} = \vnorm{\bb} $.

\subsubsection*{Updates}
Suppose $ k < g $ and define $ \DD_{k} $ from the lower triangular system $ \RR^{\T}_{k} \DD^{\T}_{k} = \VV^{\T}_{k} $. Now, letting $ \VV_{k} = [ \VV_{k-1} \mid \vv_{k}] $ and using the fact that $ \RR_{k} $ is upper-triangular, we get the recursion $ \DD_{k} = [ \DD_{k-1} \mid \dd_{k}] $ for some vector $ \dd_{k} $. As a result, using $ \RR_{k} \yy_{k} = \ttt_{k} $, one can update the iterate as 
\begin{align*}
	\xx_{k} = \VV_{k} \yy_{k} = \DD_{k} \RR_{k} \yy_{k} = \DD_{k} \ttt_{k} = \begin{bmatrix}
		\DD_{k-1} & \dd_{k}
	\end{bmatrix} \begin{bmatrix}
		\ttt_{k-1} \\
		\tau_{k}
	\end{bmatrix} = \xx_{k-1} + \tau_{k} \dd_{k}.
\end{align*}
Furthermore, from $ \VV_{k} = \DD_{k} \RR_{k} $, i.e., 
\begin{align*}
	\begin{bmatrix}
		\vv_1 & \vv_2 & \ldots & \vv_{k}
	\end{bmatrix} = \begin{bmatrix}
		\dd_1 & \dd_2 & \ldots & \dd_{k}
	\end{bmatrix} \begin{bmatrix}
		\gamma_1^{(2)} & \delta_2^{(2)} & \epsilon_3 & & \\
		& \gamma_2^{(2)} & \ddots & \ddots & \\
		& & \ddots & \ddots & \epsilon_{k} \\
		& & & \gamma_{k-1}^{(2)}  & \delta_{k}^{(2)} \\
		& & & & \gamma_{k}^{(2)}
	\end{bmatrix},
\end{align*}
we get the following relationship for computing $ \vv_{k} $ as
\begin{align*}
	\vv_{k} = \epsilon_{k} \dd_{k-2} + \delta^{(2)}_{k} \dd_{k-1} + \gamma^{(2)}_{k} \dd_{k}.
\end{align*}
All of the above steps constitute MINRES algorithm, which is given in \cref{alg:MINRES}. For more details, see \cite{bjorck2015numerical,demmel1997applied,golub2012matrix,choi2011minres,choi2014algorithm,fong2012cg,paige1975solution}. 

\begin{algorithm}
	\caption{\textbf{MINRES with Built-in NPC Detection}}
	\begin{algorithmic}
		\label{alg:MINRES}
		\STATE \textbf{Input:} The matrix $ \AA $, the right hand side vector $\bb$,
		\vspace{1mm}
		\STATE $ \beta_1 = \vnorm{\bb} $, $ \rr_{0} = \bb $, $ \vv_1 = \bb /\beta_1 $, $ \vv_0 = \xx_0 = \dd_0 = \dd_{-1} = \zero $, $ c_0 = -1 $, $ s_0 = 0 $, $ \phi_0 = \tau_0 = \beta_1 $, $ \delta^{(1)}_1 = 0 $, $ k = 1 $,
		\vspace{1mm}
		\WHILE {True}
		\vspace{1mm}
		\STATE $ \pp_{k} = \AA \vv_{k}$, $\alpha_{k} = \vv^{\T}_{k} \pp_{k} $, $ \pp_{k} = \pp_{k} - \beta_{k} \vv_{k-1} $, $ \pp_{k} = \pp_{k} - \alpha_{k} \vv_{k} $, $ \beta_{k+1} = \vnorm{\pp_{k}} $,
		\vspace{1mm}
		\STATE $ \delta^{(2)}_{k} = c_{k-1} \delta^{(1)}_{k} + s_{k-1} \alpha_{k} $, $ \gamma^{(1)}_{k} = s_{k-1} \delta^{(1)}_{k} - c_{k-1} \alpha_{k} $, $ \epsilon_{k+1} = s_{k-1} \beta_{k+1} $,  $ \delta^{(1)}_{k+1} = -c_{k-1} \beta_{k+1} $,
		\vspace{1mm}
		\IF{$ c_{k-1} \gamma^{(1)}_{k} \geq 0$} 
		\vspace{1mm}
		\RETURN $ \rr_{k-1} $ \text{as a NPC direction} (\textbf{NB:} this is an optional return; see \cref{rem:nc_return}),
		\vspace{1mm}
		\ENDIF
		\vspace{1mm}
		\STATE $ \gamma_{k}^{(2)} = \sqrt{(\gamma_{k}^{(1)})^2 + \beta_{k+1}^2} $,
		\vspace{1mm}
		\IF{ $ \gamma^{(2)}_{k} \neq 0 $}
		\vspace{1mm}
		\STATE $c_{k} = \gamma_{k}^{(1)} / \gamma_{k}^{(2)} $, $ s_{k} = \beta_{k+1} / \gamma_{k}^{(2)} $, $ \tau_{k} = c_{k} \phi_{k-1} $, $ \phi_{k} = s_{k} \phi_{k-1} $,
		\vspace{1mm}
		\STATE $ \dd_{k} = \left(\vv_{k} - \delta^{(2)}_{k} \dd_{k-1} - \epsilon_{k} \dd_{k-2} \right) / \gamma^{(2)}_{k} $, $ \xx_{k} = \xx_{k-1} + \tau_{k} \dd_{k} $,
		\vspace{1mm}
		\IF{$ \beta_{k+1} \neq 0 $} 
		\vspace{1mm}
		\STATE $ \vv_{k+1} = \pp_{k} / \beta_{k+1} $, $ \rr_{k} = s_{k}^2 \rr_{k-1} - \phi_{k} c_{k} \vv_{k+1} $,
		\vspace{1mm}
		\ELSE
		\vspace{1mm}
		\STATE $ \rr_{k} = \zero $,
		\RETURN $ \xx_{k} $ as a solution to \cref{eq:ls}.
		\vspace{1mm}
		\ENDIF
		\ELSE
		\vspace{1mm} 
		\STATE $ c_{k} = 0 $, $ s_{k} = 1 $, $ \tau_{k} = 0 $, $ \phi_{k} = \phi_{k-1} $, $ \rr_{k} = \rr_{k-1} $, $ \xx_{k} = \xx_{k-1} $,
		\vspace{1mm}
		\RETURN $ \xx_{k} $ as a solution to \cref{eq:ls}.
		\vspace{1mm}
		\ENDIF
		\vspace{1mm}
		\STATE $ k \leftarrow k+1 $,
		\vspace{1mm}
		\ENDWHILE
		\vspace{1mm}
		\STATE \textbf{Output:} A solution to \cref{eq:ls} or a NPC direction.
	\end{algorithmic}
\end{algorithm}

\begin{remark}
	\label{rem:nc_return}
	\cref{alg:MINRES} is depicted such that the iterations are terminated when an NPC direction is detected. However, depending on the application, one may choose to continue the MINRES iterations to additionally find a solution, or a suitable approximation, to \cref{eq:ls}. In other words, while the detection of NPC direction is a feature that can be leveraged in many applications, it does not necessarily imply a termination to the MINRES iterations.
\end{remark}

\section{MINRES: Main properties}
\label{sec:main}
In this section, we provide several useful properties of MINRES when applied to symmetric but potentially indefinite and/or singular systems. These include its natural ability to detect NPC directions (\cref{sec:nc}) as well as many monotonicity properties (\cref{sec:monotonicity}).

\subsection{Nonpositive curvature detection}
\label{sec:nc}
First, we show that MINRES comes naturally equipped with the ability to detect nonpositive curvature. More specifically, we show that whenever a certain readily verifiable condition within MINRES iteration holds, the residual of the previous iteration is one such direction.

\subsubsection*{Certificate of positive-definiteness of $ \TT_{k} $}
Since $ \rr_{k-1} \in \mathcal{K}_{k} (\AA, \bb) $, we can write $ \rr_{k-1} = \VV_{k} \zz $ for some $ \zz \in \real^{k} $ and $ \VV_{k} \in \real^{d \times k} $ in \cref{eq:AV}. For $ \rr_{k-1} $ to be a direction of nonpositive curvature, we must have $ \rr_{k-1}^{\T} \AA \rr_{k-1} = \zz^{\T} \TT_{k} \zz \leq 0 $, i.e., we must have $ \TT_{k} \not \succ \zero $. We show that the converse also holds (\cref{thm:determinant_T}). In other words, as soon as $ \TT_{k} \not \succ \zero $ within MINRES iterations, we must have $ \rr_{k-1}^{\T} \AA \rr_{k-1} \leq 0 $. This amounts to showing that the sequence $ \left\{\rr_{i-1}^{\T} \AA \rr_{i-1}; \; i = 1, \ldots, k \right\}$ provides a built-in certificate of positive-definiteness for $ \TT_{k} $. 

We first show that $ \rr_{i-1}^{\T} \AA \rr_{i-1} $ can be computed without additional matrix-vector products. Indeed, among other useful relations, \cref{lemma:rAr} gives an equivalent expression for $ \rr_{i-1}^{\T} \AA \rr_{i-1} $ that can be readily computed using scalar operations. 

\begin{lemma}
	\label{lemma:rAr}
	Let $ g $ be the grade of $ \bb $ with respect to $ \AA $ as in \cref{def:grade_b}. We have
	\begin{subequations}        
		\begin{align}
			\xx_{i}^{\T} \AA \rr_{k} &= 0, \quad 1 \leq i \leq k \leq g, \label{eq:xAr_perp} \\
			\rr_{i}^{\T} \AA \rr_{k} &= 0, \quad 1 \leq i \leq g, ~1 \leq k \leq g, ~i \neq k, \label{eq:rAr_perp} \\
			\rr_{k-1}^{\T} \AA \rr_{k-1} &= -\phi_{k-1}^2 c_{k-1} \gamma^{(1)}_{k}, \quad 1 \leq k \leq g, \label{eq:rAr} \\        
			\rr_{k}^{\T} \bb &= \vnorm{\rr_{k}}^2, \quad 1 \leq k \leq g. \label{eq:bTrk}
		\end{align}
	\end{subequations}
\end{lemma}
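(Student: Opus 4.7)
The plan is to exploit the defining variational property of MINRES throughout: since $\xx_{k}$ minimizes $\vnorm{\bb-\AA\xx}$ over $\mathcal{K}_{k}(\AA,\bb)$, first-order optimality forces $\dotprod{\AA\uu,\rr_{k}}=0$ for every $\uu\in\mathcal{K}_{k}(\AA,\bb)$. Three of the four identities follow almost immediately from this. For \cref{eq:xAr_perp}, whenever $i\leq k$ we have $\xx_{i}\in\mathcal{K}_{i}(\AA,\bb)\subseteq\mathcal{K}_{k}(\AA,\bb)$, so the optimality condition together with symmetry of $\AA$ gives $\xx_{i}^{\T}\AA\rr_{k}=0$. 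For \cref{eq:rAr_perp}, I would take $i<k$ without loss of generality (the other case is symmetric in $i$ and $k$), note that $\rr_{i}=\bb-\AA\xx_{i}\in\mathcal{K}_{i+1}(\AA,\bb)\subseteq\mathcal{K}_{k}(\AA,\bb)$, and conclude $\rr_{i}^{\T}\AA\rr_{k}=0$. For \cref{eq:bTrk}, write $\bb=\rr_{k}+\AA\xx_{k}$, take the inner product with $\rr_{k}$, and kill the cross term via \cref{eq:xAr_perp}.

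The main work is \cref{eq:rAr}, which I would attack in two stages. First, use the QR machinery of \cref{sec:recall} to obtain a clean Lanczos-coordinate expression for $\rr_{k-1}$. From \cref{eq:MINRES_sub} with $\RR_{k-1}\yy_{k-1}=\ttt_{k-1}$ and the block form \cref{eq:T=QR} of the QR factor,
\begin{align*}
\rr_{k-1}=\VV_{k}\bigl(\beta_{1}\ee_{1}-\wTT_{k-1}\yy_{k-1}\bigr)=\VV_{k}\QQ_{k-1}^{\T}\begin{bmatrix}\zero\\ \phi_{k-1}\end{bmatrix}=\phi_{k-1}\VV_{k}\QQ_{k-1}^{\T}\ee_{k}.
\end{align*}
Combining this with $\AA\VV_{k}=\VV_{k+1}\wTT_{k}$ from \cref{eq:AV} and the orthonormality identity $\VV_{k}^{\T}\VV_{k+1}=[\eye_{k}\mid\zero]$ collapses the quadratic form to
\begin{align*}
\rr_{k-1}^{\T}\AA\rr_{k-1}=\phi_{k-1}^{2}\,\ee_{k}^{\T}\QQ_{k-1}\TT_{k}\QQ_{k-1}^{\T}\ee_{k}.
\end{align*}

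The second stage is to simplify this scalar. From the block structure \cref{eq:block_Q}, an elementary induction (with base $c_{0}=-1$) yields $(\QQ_{k-1})_{k,k}=-c_{k-1}$ and $(\QQ_{k-1})_{k,k-1}=-s_{k-1}c_{k-2}$. Decomposing $\TT_{k}=[\wTT_{k-1}\mid\beta_{k}\ee_{k-1}+\alpha_{k}\ee_{k}]$ and using $\QQ_{k-1}\wTT_{k-1}=\wRR_{k-1}$, whose last row is zero by \cref{eq:block_R_tilde}, the row vector $\ee_{k}^{\T}\QQ_{k-1}\TT_{k}$ vanishes in its first $k-1$ entries and has last entry $\alpha_{k}(\QQ_{k-1})_{k,k}+\beta_{k}(\QQ_{k-1})_{k,k-1}=-c_{k-1}\alpha_{k}-s_{k-1}c_{k-2}\beta_{k}$. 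Using the algorithmic identities $\delta_{k}^{(1)}=-c_{k-2}\beta_{k}$ and $\gamma_{k}^{(1)}=s_{k-1}\delta_{k}^{(1)}-c_{k-1}\alpha_{k}$ from \cref{alg:MINRES}, this last entry is exactly $\gamma_{k}^{(1)}$. Multiplying by the last entry $-c_{k-1}$ of $\QQ_{k-1}^{\T}\ee_{k}$ and by $\phi_{k-1}^{2}$ delivers \cref{eq:rAr}.

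The main obstacle is the bookkeeping in this second stage: identifying the relevant entries of $\QQ_{k-1}$ from its nested reflection structure and matching them to the recursively tracked scalars $\delta_{k}^{(1)}$ and $\gamma_{k}^{(1)}$. Once the telescoping caused by the zero last row of $\wRR_{k-1}$ is exploited, the closed form drops out of the reflection formulas \cref{eq:c_s_r2} with minimal further algebra.
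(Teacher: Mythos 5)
Your treatment of \cref{eq:xAr_perp,eq:rAr_perp,eq:bTrk} matches the paper's proof exactly: all three rest on the Petrov--Galerkin orthogonality $\rr_k \perp \AA\,\mathcal{K}_k(\AA,\bb)$ together with symmetry of $\AA$, and the computation for \cref{eq:bTrk} is identical. Where you genuinely diverge is on \cref{eq:rAr}. The paper simply cites the recursions $\rr_{k} = s_{k}^2 \rr_{k-1} - \phi_{k} c_{k} \vv_{k+1}$ and $\AA\rr_{k} = \phi_{k}(\gamma^{(1)}_{k+1}\vv_{k+1} + \delta^{(1)}_{k+2}\vv_{k+2})$ from Choi et al.\ \cite{choi2011minres,choi2014algorithm}, substitutes them into $\rr_{k-1}^\T\AA\rr_{k-1}$, and collects the surviving term using orthogonality of the Lanczos vectors and residuals. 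You instead avoid invoking those external recursions: you derive $\rr_{k-1} = \phi_{k-1}\VV_k\QQ_{k-1}^\T\ee_k$ directly from $\QQ_{k-1}\beta_1\ee_1 - \wRR_{k-1}\yy_{k-1} = \phi_{k-1}\ee_k$, reduce the quadratic form to $\phi_{k-1}^2\,\ee_k^\T\QQ_{k-1}\TT_k\QQ_{k-1}^\T\ee_k$ via $\VV_k^\T\AA\VV_k = \TT_k$, and then identify the relevant scalar with $\gamma_k^{(1)}$ by tracking the last row of the nested Givens product (using that $\ee_k^\T\QQ_{k-1}\wTT_{k-1}$ is the zero last row of $\wRR_{k-1}$, and that $(\QQ_{k-1})_{k,k} = -c_{k-1}$, $(\QQ_{k-1})_{k,k-1} = -s_{k-1}c_{k-2}$). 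I verified the bookkeeping: with $\delta_k^{(1)} = -c_{k-2}\beta_k$ and $\gamma_k^{(1)} = s_{k-1}\delta_k^{(1)} - c_{k-1}\alpha_k$, the last entry of $\ee_k^\T\QQ_{k-1}\TT_k$ is exactly $\gamma_k^{(1)}$, and multiplying by $-c_{k-1}$ gives the claimed formula, including the $k=1$ base case once one takes $c_0 = -1$, $s_0 = \delta_1^{(1)} = 0$. Your route is more self-contained (it does not presuppose the residual recurrences from the literature, only the QR machinery already reviewed in \cref{sec:recall}) at the cost of heavier index-chasing; the paper's route is shorter but leans on the cited identities. Both are correct.
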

\begin{proof}
	First, recall that $ \xx_{0} = \zero $ implies $ \xx_{i} \in \mathcal{K}_{k}(\AA, \bb), \; i \leq k $. From Petrov--Galerkin conditions, we also have $\rr_{k} \perp \AA \mathcal{K}_{k}(\AA, \bb)$, which with $ \AA $ real symmetric implies \cref{eq:xAr_perp}.
	Now let $ i < k $, for which we have $ \rr_{i} \in \mathcal{K}_{k}(\AA, \bb) $. From $ \AA \rr_{k} \perp \mathcal{K}_{k}(\AA, \bb) $, we get $ \rr_{i}^{\T} \AA \rr_{k} = 0 $ for $ i < k $. From the symmetry of $ \AA $, we get $ \rr_{i}^{\T} \AA \rr_{k} = 0 $ for all $ i \neq j $, which gives \cref{eq:rAr_perp}. Now, from \cite{choi2011minres,choi2014algorithm} we know that in MINRES, $ \rr_{k} $ and $ \AA \rr_{k} $ satisfy
	\begin{align}
		\label{eq:rk_rec}
		\rr_{k} = s_{k}^2 \rr_{k-1} - \phi_{k} c_{k} \vv_{k+1}, \quad \AA \rr_{k} = \phi_{k} (\gamma^{(1)}_{k+1} \vv_{k+1} + \delta^{(1)}_{k+2} \vv_{k+2}).
	\end{align}
	Noting that $ \rr_{k-2} \perp \Span\left\{\vv_{k},\vv_{k+1}\right\} $, and $ \vv_{k} \perp \vv_{k+1} $, we have
	\begin{align*}
		\rr_{k-1}^{\T} \AA \rr_{k-1} = \left( s_{k-1}^2 \rr_{k-2} - \phi_{k-1} c_{k-1} \vv_{k} \right)^{\T} \left( \phi_{k-1} (\gamma^{(1)}_{k} \vv_{k} + \delta^{(1)}_{k+1} \vv_{k+1}) \right) = - \phi_{k-1}^2 c_{k-1} \gamma^{(1)}_{k},
	\end{align*}
	which gives \cref{eq:rAr}. Finally, from \cref{eq:xAr_perp} we get 
	\begin{align*}
		\rr_{k}^{\T} \bb = \rr_{k}^{\T} \left( \rr_{k} + \AA \xx_{k} \right) = \vnorm{\rr_{k}}^2,
	\end{align*}
	which yields \cref{eq:bTrk}.
\end{proof}

\begin{remark}[NPC Condition]
	\label{rem:MINRES_NC}
	From \cref{eq:rAr}, we see that if 
	\begin{align}
		\label{eq:c_gama_nc}
		c_{k-1} \gamma^{(1)}_{k} \geq 0,
	\end{align}
	then $ \rr_{k-1} $ is a NPC direction for $ \AA $, i.e.,
	\begin{align*}
		\frac{\dotprod{\rr_{k-1},\AA \rr_{k-1}}}{\vnorm{\rr_{k-1}}^2} = -c_{k-1} \gamma^{(1)}_{k} \leq 0.
	\end{align*}
\end{remark}

Consider any $ \ell \leq g $. Since $\Span \{ \rr_{0}, \ldots, \rr_{k-1}\}  \subseteq \mathcal{K}_{k} (\AA, \bb) = \range(\VV_{k}) $, we have $\rr_{k-1} = \VV_{k} \yy$ for some $ \yy \in \real^{k} $. From $ \TT_{k} = \VV_{k}^{\T} \AA \VV_{k} $, it follows that $ \rr_{k-1}^{\T} \AA \rr_{k-1} = \yy^{\T} \TT_{k} \yy $. As a result, if $ \TT_{\ell} \succ \zero $,  then \cref{eq:rAr} implies that the NPC condition \cref{eq:c_gama_nc} does not hold for any $ k \leq \ell $. In other words, if $\TT_{\ell} \succ \zero$, then $c_{k-1} \gamma^{(1)}_{k} < 0, \; \forall 1\leq k \leq \ell$.
\cref{thm:determinant_T} below provides a converse to this. In particular, it states that as soon as $ \TT_{k} \not \succ \zero $, the NPC condition \cref{eq:c_gama_nc} holds and $ \rr_{k-1} $ is identified as a NPC direction. That is, \cref{thm:determinant_T} states that if $c_{k-1} \gamma^{(1)}_{k} < 0, \; \forall 1\leq k \leq \ell$, then we must have $\TT_{\ell} \succ \zero$.

\begin{theorem}[Certificate for $ \TT_{k} \succ \zero $]
	\label{thm:determinant_T}
	Suppose $ \ell \leq g $ where $ g $ is the grade of $ \bb $ with respect to $ \AA $ as in \cref{def:grade_b}. If $ \TT_{\ell} \not \succ \zero $, then the NPC condition \cref{eq:c_gama_nc} holds for some $ k \leq \ell $. In particular, if $ k \leq g $ is the first iteration where $ \TT_{k} \not \succ \zero $, then the NPC condition \cref{eq:c_gama_nc} holds.
\end{theorem}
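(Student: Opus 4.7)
The plan is to reduce the statement to the smallest $k \leq \ell$ at which $\TT_k \not\succ \zero$; by minimality, the preceding tridiagonal $\TT_{k-1}$ is positive definite (with the convention that $\TT_0$ is vacuously so, which handles the base case $k=1$), and it suffices to verify the NPC condition $c_{k-1}\gamma_k^{(1)} \geq 0$ at this particular $k$. Throughout, since $k \leq \ell \leq g$, the Lanczos process has not broken down, so $\beta_{i+1} > 0$ and consequently $\gamma_i^{(2)} > 0$ for every $i < k$.

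The heart of the argument is an algebraic identity relating $c_{k-1}\gamma_k^{(1)}$ to the $k$-th pivot $\pi_k$ of the $LDL^{\T}$ factorization of $\TT_k$. To establish it, I would observe that the $k \times k$ orthogonal factor $\QQ_{k-1}$, when applied to $\TT_k$ itself, produces an upper triangular matrix with diagonal $(\gamma_1^{(2)}, \ldots, \gamma_{k-1}^{(2)}, \gamma_k^{(1)})$. Indeed, the first $k-1$ columns of $\TT_k$ coincide with the Hessenberg matrix $\wTT_{k-1}$, which $\QQ_{k-1}$ triangularizes into $[\RR_{k-1}^{\T}, \zero]^{\T}$; the last column, namely $(\zero^{\T}, \beta_k, \alpha_k)^{\T}$, is transformed by the successive reflections $\QQ_{i,i+1}$ for $i \leq k-1$ into entries $(\epsilon_k, \delta_k^{(2)}, \gamma_k^{(1)})$ at rows $k-2, k-1, k$. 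Crucially, the final MINRES rotation $\QQ_{k,k+1}$ (which would mix $\gamma_k^{(1)}$ with $\beta_{k+1}$ to produce $\gamma_k^{(2)}$) is not applied here, because $\TT_k$ lacks the bottom row $\beta_{k+1}\ee_k^{\T}$ of $\wTT_k$. Since each $\QQ_{i,i+1}$ is a reflection of determinant $-1$, taking determinants yields
\begin{align*}
\det(\TT_k) = (-1)^{k-1}\gamma_k^{(1)}\prod_{i=1}^{k-1}\gamma_i^{(2)}.
\end{align*}

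Writing the analogous formula for $\det(\TT_{k-1})$ and combining it with the standard $LDL^{\T}$ identity $\det(\TT_k) = \pi_k \det(\TT_{k-1})$, a short ratio computation gives
\begin{align*}
c_{k-1}\gamma_k^{(1)} = -c_{k-1}^{2}\,\pi_k.
\end{align*}
From here the conclusion is immediate. Since $\TT_{k-1} \succ \zero$, the pivots $\pi_1, \ldots, \pi_{k-1}$ are positive, so $\det(\TT_{k-1}) > 0$ and the determinant formula forces $\gamma_{k-1}^{(1)} \neq 0$, i.e.\ $c_{k-1} \neq 0$. Since $\TT_k \not\succ \zero$ while $\TT_{k-1} \succ \zero$, Sylvester's criterion (equivalently, $\pi_k = \det(\TT_k)/\det(\TT_{k-1})$) gives $\pi_k \leq 0$. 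Therefore $c_{k-1}\gamma_k^{(1)} = -c_{k-1}^{2}\pi_k \geq 0$, which is exactly \cref{eq:c_gama_nc}.

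The main obstacle I anticipate is the clean justification that $\QQ_{k-1}\TT_k$ is upper triangular with the claimed diagonal. This requires matching the MINRES QR bookkeeping, whose rotations were designed to triangularize the $(k+1) \times k$ Hessenberg matrix $\wTT_k$ rather than the $k \times k$ symmetric matrix $\TT_k$, and confirming that removing the bottom row $\beta_{k+1}\ee_k^{\T}$ exactly obviates the final rotation $\QQ_{k,k+1}$, so the $(k,k)$ entry is $\gamma_k^{(1)}$ rather than $\gamma_k^{(2)}$. Once this structural identity is in place, the remainder of the argument is elementary linear algebra.
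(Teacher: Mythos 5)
Your proof is correct, and it follows a genuinely different route from the paper's. The paper argues through the residual vectors: it uses the $\AA$-orthogonality of residuals \cref{eq:rAr_perp} and the identity $\rr_{k-1}^\T \AA \rr_{k-1} = -\phi_{k-1}^2 c_{k-1}\gamma_k^{(1)}$ from \cref{eq:rAr} to run a contradiction argument inside $\mathcal{K}_\ell(\AA,\bb)$, handling separately the degenerate case where the residuals fail to span the Krylov subspace. You instead work entirely with the QR scalars: noting that $\QQ_{k-1}\TT_k$ is upper triangular with trailing diagonal entry $\gamma_k^{(1)}$ (because deleting the row $\beta_{k+1}\ee_k^\T$ from $\wTT_k$ removes exactly the final reflection $\QQ_{k,k+1}$), and that each reflection $\QQ_{i,i+1}$ has determinant $-1$, you obtain $\det(\TT_k)=(-1)^{k-1}\gamma_k^{(1)}\prod_{i=1}^{k-1}\gamma_i^{(2)}$ and from it $c_{k-1}\gamma_k^{(1)}=-c_{k-1}^2\pi_k$, with $\pi_k$ the $k$\th pivot of $\TT_k$. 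This makes the equivalence between positive definiteness of $\TT_k$ and the sign of $c_{k-1}\gamma_k^{(1)}$ fully explicit, bypasses both \cref{lemma:rAr} and the residual-span case analysis, and the determinant identity is a clean standalone fact relating the MINRES rotations to the $LDL^\T$ pivots of the Lanczos tridiagonal. What the paper's route buys is that it proceeds directly through $\rr_{k-1}^\T\AA\rr_{k-1}$, so it already exhibits $\rr_{k-1}$ as the explicit NPC direction along the way; your argument certifies the sign of $c_{k-1}\gamma_k^{(1)}$ without constructing a witness vector, so \cref{lemma:rAr} is still needed to recover the residual interpretation used downstream. The structural observation you flag as the potential obstacle — that the absence of the $\beta_{k+1}\ee_k^\T$ row means the accumulated rotations $\QQ_{k-1}$ already triangularize $\TT_k$ with $\gamma_k^{(1)}$, not $\gamma_k^{(2)}$, as the $(k,k)$ entry — is indeed the crux and is handled correctly, including the base case $k=1$ via the initialization $c_0=-1$.
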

\begin{proof}
	Suppose $ \TT_{\ell} \not \succ \zero $. We first note that $ k \leq g $ implies that $\rr_{k-1} \neq \zero$, which in turn gives $ \phi_{k-1} \neq 0 $. From the properties of the Krylov subspace, we have $ \Span \{ \rr_{0}, \ldots, \rr_{\ell-1}\} \subseteq \mathcal{K}_{\ell} (\AA, \bb) $. We now consider two cases.
	
	\begin{enumerate}[label = {\bfseries (\roman*)}]
		\item \label{case:2a} Let's first consider the case where $ \Span \{ \rr_{0}, \ldots, \rr_{\ell-1}\} = \mathcal{K}_{\ell} (\AA, \bb) $. 
		Suppose for any nonzero $\zz \in \mathcal{K}_{\ell} (\AA, \bb)$, we have $\dotprod{\zz, \AA \zz} > 0$. Consequently, for any nonzero $ \ww \in \real^{\ell} $, we can let $ \zz = \VV_{\ell} \ww \in \mathcal{K}_{\ell} (\AA, \bb) $ and have
		\begin{align*}
			\ww^{\T} \TT_{\ell} \ww = \ww^{\T} \VV_{\ell}^{\T} \AA \VV_{\ell} \ww = \zz^{\T} \AA \zz > 0. 
		\end{align*}
		However, this implies that $ \TT_{\ell} \succ \zero $, which contradicts the assumption on $ \TT_{\ell} $. Hence, there must exists nonzero $ \zz \in \mathcal{K}_{\ell} (\AA, \bb) $ for which $ \zz^{\T} \AA \zz \leq 0 $. Let $ \xi_{1}, \ldots, \xi_{\ell} $ be scalars such that 
		\begin{align*}
			\zz = \sum_{k=1}^{\ell} \xi_{k} \rr_{k-1}.
		\end{align*}
		Suppose $ \rr_{k-1}^{\T} \AA \rr_{k-1} > 0 $ for all $ 1 \leq k \leq \ell $. Then by \cref{eq:rAr_perp}, we have
		\begin{align*}
			0 \geq \zz^{\T} \AA \zz = \sum_{i=1}^{\ell} \sum_{j=1}^{\ell} \xi_{i} \xi_{j} \rr_{i-1}^{\T} \AA \rr_{j-1} = \sum_{k=1}^{\ell} \xi_{k}^2 \rr_{k-1}^{\T} \AA \rr_{k-1} > 0,
		\end{align*}
		which is a contradiction. Hence, we must have $ \rr_{k-1}^{\T} \AA \rr_{k-1} \leq 0 $ for some $ 1\leq k \leq \ell $, i.e., the NPC condition \cref{eq:c_gama_nc} holds for some $ k \leq \ell $. In particular, if $ k $ is the first iteration where $ \TT_{k} \not \succ \zero $, we must in fact have $ \rr_{k-1}^{\T} \AA \rr_{k-1} \leq 0 $. In other words, as soon as $ \TT_{k} $ contains a nonpositive eigenvalue, the NPC condition \cref{eq:c_gama_nc} holds and hence the vector $ \rr_{k-1} $ is a nonpositive curvature direction. 
		
		\item Now suppose $ \Span \{ \rr_{0}, \ldots, \rr_{\ell-1}\} \subsetneq \mathcal{K}_{\ell} (\AA, \bb) $. Suppose $ k \leq \ell $ is the first iteration where the additional residual vector has not increased the Krylov space's dimension, i.e., $\Span \{ \rr_{0}, \ldots, \rr_{k-1}\} = \mathcal{K}_{k-1} (\AA, \bb)$. 
		In this case, since $ \Span\{\rr_{k-2},\rr_{k-1}\} \perp \vv_{k} $, by \cref{eq:rk_rec} we must have $\phi_{k-1} c_{k-1} =0 $, i.e., $ \rr_{k-1} = s_{k-1}^2 \rr_{k-2} $. Since $ \phi_{k-1} \neq 0 $, we can only have $ c_{k-1} = 0 $ and $ s_{k-1} = 1 $, i.e., $ \rr_{k-1} = \rr_{k-2} $. 
		Hence, by \cref{eq:rAr}, we get
		\begin{align*}
			\rr_{k-1}^{\T} \AA \rr_{k-1} = \rr_{k-2}^{\T} \AA \rr_{k-2} = 0,
		\end{align*}
		which implies that both $ \rr_{k-1}$ and $\rr_{k-2} $ are nonpositive curvature directions (this also implies that $ \TT_{k-1} \not \succ \zero $). In fact, since $ \Span \{ \rr_{0}, \ldots, \rr_{k-2}\} = \mathcal{K}_{k-1} (\AA, \bb) $, based on the discussion of the previous case, having $ \rr_{k-2}^{\T} \AA \rr_{k-2} = 0 $ implies that we should have already detected nonpositive curvature in the previous iteration.
	\end{enumerate}
\end{proof}

It is possible to show that if $ \AA \succeq \zero$, then for any $ 1 \leq k \leq g - 1 $, we have $ \rr_{k-1}^{\T} \AA \rr_{k-1} > 0 $ and $ \TT_{k} \succ \zero$; see \cref{lemma:Lanczos} below. More generally, however, for any real symmetric but not necessarily PSD matrix $ \AA $, \cref{thm:determinant_T} implies that by keeping track of the NPC condition \cref{eq:c_gama_nc}, if $ \rr_{i-1}^{\T} \AA \rr_{i-1} > 0$ for all $1 \leq i \leq k \leq g $, then $ \TT_{k} \succ \zero $. In other words, MINRES has a built-in mechanism to provide a certificate of positive definiteness (or lack thereof) for $ \TT_{k} $. 

\subsubsection*{Certificate for $ \AA \succeq \zero $}
A natural question to ask is whether it is possible to provide a similar certificate for $ \AA $ from within MINRES iterations? 
The question arises because, as part of the Lanczos process, one expects to see tight connections between the spectral properties of $ \AA $ and $ \TT_{k} $. 
For examples, when $ \AA \succ \zero $, we trivially have $ \TT_{k} \succ \zero, \; 1 \leq k \leq g$. In fact, having $ \AA \succeq \zero$ implies positive definiteness of $ \TT_{k} $ for $ 1 \leq k \leq g-1 $. 
%

\begin{lemma}
	\label{lemma:Lanczos}
	Suppose $ \AA \succeq \zero$ and let $ g $ be the grade of $ \bb $ with respect to $ \AA $ as in \cref{def:grade_b}. The following statements hold.
	\begin{enumerate}[label = \textbf{(\roman*)}]
		\item $ \TT_{k} \succ \zero, \; 1 \leq k \leq g-1$.
		\item If $ \bb \in \range(\AA) $, then $ \TT_{g} \succ \zero$.
		\item If $ \bb \notin \range(\AA) $, then $ \TT_{g} \succeq \zero $ is singular, $ \gamma_{g}^{(2)} = 0 $, and $ \rr_{g-1} $ is a zero curvature direction.
	\end{enumerate}
\end{lemma}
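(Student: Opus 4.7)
The unifying idea behind all three parts is to exploit the identity $\TT_{k} = \VV_{k}^{\T} \AA \VV_{k}$ implied by the Lanczos recurrence \cref{eq:AV}, which immediately yields $\TT_{k} \succeq \zero$ whenever $\AA \succeq \zero$. Determining when $\TT_{k}$ is strictly positive definite, and extracting the consequences when it fails to be, will be carried out using properties of the Krylov subspace $\mathcal{K}_{k}(\AA, \bb)$ together with the ``lucky breakdown'' identity $\AA \VV_{g} = \VV_{g} \TT_{g}$ (equivalent to $\beta_{g+1} = 0$) that holds at the grade.

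For \textbf{(i)}, suppose $\yy \neq \zero$ satisfies $\yy^{\T} \TT_{k} \yy = 0$ for some $k \leq g-1$. Since $\TT_{k} \succeq \zero$, this forces $\AA \VV_{k} \yy = \zero$, i.e., $\VV_{k} \yy \in \ker \AA$; moreover $\VV_{k} \yy \neq \zero$ by the orthonormality of $\VV_{k}$. Writing $\VV_{k} \yy = \sum_{i=0}^{k-1} c_{i} \AA^{i} \bb$ with not all $c_{i}$ zero and applying $\AA$ produces a nontrivial linear dependence among $\{\AA \bb, \AA^{2} \bb, \ldots, \AA^{k} \bb\}$. But $k+1 \leq g$ forces the larger set $\{\bb, \AA \bb, \ldots, \AA^{k} \bb\}$, and a fortiori its subset above, to be linearly independent---a contradiction.

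For \textbf{(ii)}, the assumption $\bb = \AA \ww$ gives $\mathcal{K}_{g}(\AA, \bb) = \Span\{\AA \ww, \AA^{2} \ww, \ldots, \AA^{g} \ww\} \subseteq \range(\AA)$. Since $\AA$ is symmetric, $\ker \AA \perp \range(\AA)$, hence $\mathcal{K}_{g}(\AA, \bb) \cap \ker \AA = \{\zero\}$. Re-running the argument of \textbf{(i)} with $k = g$ then forces $\VV_{g} \yy = \zero$, hence $\yy = \zero$, so $\TT_{g} \succ \zero$.

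For \textbf{(iii)}, the breakdown identity $\AA \VV_{g} = \VV_{g} \TT_{g}$ shows that $\mathcal{K}_{g}(\AA, \bb)$ is $\AA$-invariant. Since $\bb$ lies in $\mathcal{K}_{g}(\AA, \bb)$ but not in $\range(\AA)$, the restriction $\AA|_{\mathcal{K}_{g}(\AA, \bb)}$ is not surjective and, by finite-dimensionality, not injective; this produces some $\zero \neq \VV_{g} \yy \in \mathcal{K}_{g}(\AA, \bb) \cap \ker \AA$, yielding $\TT_{g} \yy = \zero$, so $\TT_{g} \succeq \zero$ is singular. To convert this singularity into $\gamma_{g}^{(2)} = 0$, I will note that part \textbf{(i)} together with $\beta_{k+1} \neq 0$ for $k \leq g-1$ makes $\wTT_{k}$ of full column rank, and the block structure of \cref{eq:T=QR} then forces $\gamma_{i}^{(2)} \neq 0$ for all $i \leq g-1$; since $\beta_{g+1} = 0$ makes $\wTT_{g}$ have the same rank as $\TT_{g}$, which is less than $g$, the only diagonal entry of $\RR_{g}$ that can vanish is $\gamma_{g}^{(2)}$. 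Combined with $\beta_{g+1} = 0$, the relation $\gamma_{g}^{(2)} = \sqrt{(\gamma_{g}^{(1)})^{2} + \beta_{g+1}^{2}}$ from \cref{eq:c_s_r2} then yields $\gamma_{g}^{(1)} = 0$, so \cref{eq:rAr} gives $\rr_{g-1}^{\T} \AA \rr_{g-1} = -\phi_{g-1}^{2} c_{g-1} \gamma_{g}^{(1)} = 0$; since $\bb \notin \range(\AA)$ rules out $\rr_{g-1} = \zero$, we conclude that $\rr_{g-1}$ is a nonzero zero-curvature direction. The main subtlety I anticipate is this last bookkeeping step, namely localizing the singularity of $\TT_{g}$ at the final diagonal entry $\gamma_{g}^{(2)}$ rather than at some earlier $\gamma_{i}^{(2)}$, which is exactly what part \textbf{(i)} and the full-column-rank analysis of $\wTT_{k}$ are tailored to ensure.
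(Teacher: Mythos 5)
Your proof is correct and is genuinely self-contained where the paper's is not: for part (i) you replace the paper's appeal to the Sturm sequence property and strict eigenvalue interlacing (Theorem 8.4.1 of Golub--Van Loan) with a direct Krylov-independence argument; for part (ii), instead of citing Property J2 of \cite{hnvetynkova2007lanczos}, you derive $\mathcal{K}_g(\AA,\bb)\subseteq\range(\AA)$ and use $\ker\AA\perp\range(\AA)$ to kill the candidate null vector; and for part (iii), rather than quoting Theorem 3.2 of \cite{choi2011minres} to obtain both singularity of $\TT_g$ and $\gamma_g^{(1)}=0$ directly, you first prove singularity via $\AA$-invariance and rank--nullity, then deduce $\gamma_g^{(2)}=0$ from a rank count on the QR factorization, and only then recover $\gamma_g^{(1)}=0$ from \cref{eq:c_s_r2} (inverting the paper's logical order between $\gamma_g^{(1)}$ and $\gamma_g^{(2)}$). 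The key bookkeeping you flagged --- that the singular diagonal entry of $\RR_g$ must be the last one --- is handled correctly: the leading $\gamma_1^{(2)},\ldots,\gamma_{g-1}^{(2)}$ are already fixed and nonzero from the full-rank $\wTT_k$ for $k\leq g-1$, and since $\det\RR_g=\prod_i\gamma_i^{(2)}$ for an upper-triangular matrix, vanishing rank forces $\gamma_g^{(2)}=0$. The paper's route is shorter because it leans on classical Lanczos/MINRES results; yours is longer but requires no outside references and makes the mechanism behind each claim explicit, which is arguably preferable in a self-contained appendix.
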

\begin{proof}
	From $ \TT_{k} = \VV_{k}^{\T} \AA \VV_{k} $ and $ \AA \succeq \zero $, it follows that $ \TT_{k} \succeq \zero, 1 \leq k \leq g $. By the Sturm Sequence Property and the strict interlacing result \cite[Theorem 8.4.1]{golub2012matrix}, the smallest eigenvalue of $ \TTt $ decreases to zero monotonically with $ k $. As a result, only $ \TT_{g} $ can potentially be singular in which case it will have exactly one zero eigenvalue. In other words, we must always have $ \TT_{k} \succ \zero, \; 1 \leq k \leq g-1 $. 
	
	Now, if $ \bb \in \range(\AA) $, then by \cite[Property J2, p.~247]{hnvetynkova2007lanczos}, we know that $ \TT_{g} \succ \zero$. Otherwise, from \cite[Theorem 3.2]{choi2011minres}, it follows that $ \TT_{g} $ is singular and $ \gamma_{g}^{(1)} = 0 $. Hence, by \cref{eq:rAr}, we have $ \rr_{g-1}^{\T} \AA \rr_{g-1} =  0 $. In other words, $ \rr_{g-1} $ is a zero curvature direction. Furthermore, since $ \beta_{g+1} = 0 $,  we also get $ \gamma_{g}^{(2)} = 0 $ by \cref{eq:c_s_r2}. 
\end{proof}

From \cref{lemma:Lanczos}, it follows that when $ \AA \succeq \zero$, we have $ \TT_{k} \succ \zero, \;  1 \leq k \leq g - 1$, and $ \TT_{g} \succeq \zero$ (if $ \bb \in \range(\AA) $, we actually have $ \TT_{g} \succ \zero$). \cref{thm:A_PSD} below provides somewhat of a converse to this. More specifically, \cref{thm:A_PSD} shows that, under a certain condition on $ g $, having $ \TT_{k} \succ \zero,\; 1\leq k \leq g $, provides a certificate for $ \AA \succeq \zero$.
As a result, \cref{thm:A_PSD} shows that MINRES comes equipped with an inherent ability to provide a certificate of positive semi-definiteness for $ \AA $. This, just like the case for $ \TT_{k} $, can be done by tracking the NPC condition \cref{eq:c_gama_nc}. In doing so, however, one has to also take into account the interplay of $ \AA $ and $ \bb $ as it relates to $ g $, i.e., the grade of $ \bb $ with respect to $ \AA $ (\cref{def:grade_b}). Indeed, consider the case where $ \AA $ is indefinite, but $ \bb $ is an eigenvector corresponding to one of its positive eigenvalues. In this case, since $ g =1 $, MINRES terminates after only one iteration and the negative spectrum of $ \AA $ is never explored. To avoid such situations, we need $ \bb $ to have a non-zero projection on the eigenspace corresponding to each eigenvalue of $ \AA $, which is equivalent to $ g $ being equal to the number of distinct eigenvalues of $ \AA $.

\begin{theorem}[Certificate for $ \AA \succeq \zero $]
	\label{thm:A_PSD}
	Assume $ g $ equals the number of distinct eigenvalues of  $ \AA $, where $ g $ is the grade of $ \bb $ with respect to $ \AA $  as in \cref{def:grade_b}. If $ \AA \not \succeq \zero $, then $\TT_{k} \not \succ \zero$ for some $k \leq g$, and hence the NPC condition \cref{eq:c_gama_nc} holds for some $ k \leq g $. 
\end{theorem}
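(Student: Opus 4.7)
The plan is to exhibit an explicit nonpositive curvature vector inside $\mathcal{K}_{g}(\AA,\bb)$ under the stated grade condition, and then hand the work off to \cref{thm:determinant_T}. The hypothesis that $g$ equals the number of distinct eigenvalues of $\AA$ is there precisely to ensure that $\bb$ ``sees'' every eigenspace of $\AA$, so that the Krylov subspace is rich enough to witness any negative eigenvalue.

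First, I would write the spectral decomposition $\bb = \sum_{i=1}^{g} \uu_{i}$, where $\uu_{i}$ is the orthogonal projection of $\bb$ onto the eigenspace $E_{\lambda_{i}}$ associated with the $i$-th distinct eigenvalue $\lambda_{i}$ of $\AA$. I would then argue that each $\uu_{i} \neq \zero$: if some projection vanished, the sequence $\{\AA^{k}\bb\}_{k\geq 0}$ would stay in the span of at most $g-1$ eigenspaces, forcing $\dim \mathcal{K}_{g}(\AA,\bb) \leq g-1$ and contradicting \cref{def:grade_b}.

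Second, I would show that each individual $\uu_{i}$ lies in $\mathcal{K}_{g}(\AA,\bb)$ via a Vandermonde argument. The identity
\begin{align*}
\begin{bmatrix} \bb & \AA\bb & \cdots & \AA^{g-1}\bb \end{bmatrix} = \begin{bmatrix} \uu_{1} & \uu_{2} & \cdots & \uu_{g} \end{bmatrix} \WW,
\end{align*}
where $\WW \in \real^{g\times g}$ is the Vandermonde matrix with entries $\WW_{ij} = \lambda_{i}^{j-1}$, is immediate from $\AA^{k}\uu_{i} = \lambda_{i}^{k}\uu_{i}$. Since the $\lambda_{i}$ are distinct, $\WW$ is invertible, so each $\uu_{i}$ is a linear combination of $\{\bb,\AA\bb,\ldots,\AA^{g-1}\bb\}$, and hence $\uu_{i} \in \mathcal{K}_{g}(\AA,\bb)$.

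Finally, since $\AA$ is symmetric with $\AA \not\succeq \zero$, at least one distinct eigenvalue $\lambda_{j}$ is strictly negative. The corresponding $\uu_{j}$ is a nonzero element of $\mathcal{K}_{g}(\AA,\bb)$ satisfying $\uu_{j}^{\T}\AA \uu_{j} = \lambda_{j}\|\uu_{j}\|^{2} < 0$. Writing $\uu_{j} = \VV_{g}\ww$ for some nonzero $\ww \in \real^{g}$ gives $\ww^{\T}\TT_{g}\ww = \uu_{j}^{\T}\AA \uu_{j} < 0$, so $\TT_{g} \not\succ \zero$. Invoking \cref{thm:determinant_T} with $\ell = g$ then yields that the NPC condition \cref{eq:c_gama_nc} must hold for some $k \leq g$, completing the proof. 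The only genuinely delicate step is the Vandermonde/grade argument establishing that each eigenspace projection of $\bb$ is both nonzero and expressible inside the Krylov subspace; everything else is linear algebra or a direct appeal to the preceding theorem.
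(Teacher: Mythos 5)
Your proof is correct, and it reaches the same endpoint as the paper---showing $\TT_{g}\not\succ\zero$ and then invoking \cref{thm:determinant_T}---but takes a more self-contained route for the key step. The paper remarks that the grade hypothesis is equivalent to $\bb$ having nonzero projection onto every eigenspace, and then defers to a textbook result (the cited Theorem 2.6.2 of Bj\"{o}rck) to conclude that $\range(\VV_{g})$ contains an eigenvector of a negative eigenvalue; the argument is structured as a contradiction against the assumed $\TT_{k}\succ\zero$ for all $k\le g$. You instead make the construction explicit: you decompose $\bb=\sum_{i}\uu_{i}$ by eigenspace projections, argue that the grade hypothesis forces each $\uu_{i}\neq\zero$ (if one vanished, $\mathcal{K}_{g}(\AA,\bb)$ would collapse into a subspace of dimension at most $g-1$), and use the invertibility of the Vandermonde matrix built on the distinct eigenvalues to show that each $\uu_{i}\in\mathcal{K}_{g}(\AA,\bb)$. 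Since $\uu_{j}$ is a genuine eigenvector (it lies in $E_{\lambda_{j}}$), the choice $\lambda_{j}<0$ immediately yields a vector in $\range(\VV_{g})$ with $\uu_{j}^{\T}\AA\uu_{j}<0$, hence $\TT_{g}\not\succ\zero$ directly, with no case split. What your approach buys is a fully elementary, reference-free justification of exactly the fact the paper outsources, and a slightly cleaner logical structure; what you trade away is only brevity. Both arguments use the same underlying fact---that under the grade hypothesis the Krylov space witnesses every eigenvalue of $\AA$---so this is a refinement rather than a fundamentally different proof.
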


\begin{proof}
	Suppose $ \AA \not \succeq \zero $. If $ k \leq g $ is the first iteration such that $ \TT_{k} \not \succ \zero $, then by \cref{thm:determinant_T}, we have that the NPC condition \cref{eq:c_gama_nc} holds. Now suppose, for all $ k \leq g $, we always have $ \TT_{k} \succ \zero $. In this case, in particular, we have $ \TT_{g}  = \VV_{g}^{\T} \AA \VV_{g} \succ \zero $. Recall that the assumption on $ g $ is equivalent to $ \bb $ having a non-zero projection on the eigenspace corresponding to each eigenvalue of $ \AA $. Hence, using a very similar line of reasoning as in the proof of \cite[Theorem 2.6.2]{bjorck2015numerical}, we can show that $\range(\VV_{g})$ contains an eigenvector corresponding to a negative eigenvalue of $ \AA $. Let $ \vv $ be such an eigenvector, so we get $\dotprod{\vv, \AA \vv} < 0$. Since $ \vv \in \range(\VV_{g}) $, we can write $ \vv = \VV_{g} \ww $ for some $ \ww \in \real^{g} $. Now, it follows that
	\begin{align*}
		\ww^{\T} \TT_{g} \ww = \ww^{\T} \VV_{g}^{\T} \AA \VV_{g} \ww = \vv^{\T} \AA \vv < 0, 
	\end{align*}
	which contradicts having $ \TT_{g} \succ \zero $. Hence, we must have $ \TT_{g} \not \succ \zero $, which by \cref{thm:determinant_T} implies that the NPC condition \cref{eq:c_gama_nc} holds with $ k = g $.
\end{proof}

\begin{remark}
	In cases where one's primary objective is either to obtain a certificate of positive semi-definiteness or a direction of nonpositive curvature for $ \AA $, one can draw $ \bb $ from a suitably chosen random distribution, e.g., uniform distribution on the unit sphere. In this case, $ g $ satisfies the assumption of \cref{thm:A_PSD} with probability one.
\end{remark}

\subsubsection*{Top right $ (k-1)\times(k-1) $ block of $\RR_{k}$}
\cref{thm:determinant_T} provides a characterization for the positive-definiteness of $\TT_{k}$ in terms of the NPC condition \cref{eq:c_gama_nc}. 
Recall that, by Sylvester's criterion, $ \TT_{k} \succ \zero $ is a necessary and sufficient condition for the determinant of all trailing principal minors of $ \TT_{k} $ to be positive.
In this light, and as a corollary of \cref{thm:determinant_T}, the NPC condition \cref{eq:c_gama_nc} provides a characterization for the sign of such determinants.
More specifically, let the determinant of the trailing principal minors of $ \TT_{k} $ be defined as
\begin{align}
	p_{(k,l)} &\defeq \det \TT_{(k,l)} \triangleq \det \begin{bmatrix}
		\alpha_{k-l+1} & \beta_{k-l+2} & & \\
		\beta_{k-l+2} & \ddots & \ddots & \\
		& \ddots & \ddots& \beta_{k} \\
		& & \beta_{k} & \alpha_{k}
	\end{bmatrix}, \qquad 1\leq l \leq k \leq g, \label{eq:det_T_R:T}
\end{align}
where $ g $ is the grade of $ \bb $ with respect to $ \AA $.
\cref{thm:determinant_T} implies that as long as the NPC condition \cref{eq:c_gama_nc} has not been detected after $ k $ iterations of MINRES, we have $ p_{(k,l)} > 0, \;  l = 1, \ldots, k $.  

Recall the upper-triangular matrix $ \RR_{k} $ in \cref{eq:block_R_tilde}. Clearly, by \cref{eq:c_s_r2}, we trivially get that the determinant of the trailing principal minors of $ \RR_{k}, \; 1 \leq k \leq g $, is also positive. It turns out that we can also make a similar statement about the top right $ (k-1) \times (k-1) $ block of $ \RR_{k} $, namely 
\begin{align*}
	\mathbf{S}_{k} =
	\begin{bmatrix}
		\delta_2^{(2)} & \epsilon_3 & & \\
		\gamma_2^{(2)} & \delta_3^{(2)} & \epsilon_4 & \\
		& \gamma_3^{(2)} & \ddots & \ddots & \\
		& & \ddots & \ddots & \epsilon_{k} \\
		& & & \gamma_{k-1}^{(2)}  & \delta_{k}^{(2)} \\
	\end{bmatrix}&.
\end{align*}

\begin{theorem}
	\label{thm:determinants}     
	Let the determinant of the trailing principal minors of $ \mathbf{S}_{k} $ be defined as
	\begin{align}
		q_{(k,l)} &\defeq \det \begin{bmatrix}
			\delta_{k-l+1}^{(2)} & \epsilon_{k-l+2} & & \\
			\gamma_{k-l+1}^{(2)} & \ddots & \ddots & \\
			& \ddots & \ddots& \epsilon_{k} \\
			& & \gamma_{k-1}^{(2)} & \delta_{k}^{(2)}
		\end{bmatrix}, \qquad 1\leq l < k \leq g,  \label{eq:det_T_R:S}
	\end{align}
	where $ g $ is the grade of $ \bb $ with respect to $ \AA $ as in \cref{def:grade_b}.
	As long as the NPC condition \cref{eq:c_gama_nc} has not been detected for $ 2 \leq k \leq g $, we must have 
	\begin{align*}
		q_{(k,l)} > 0, \quad 1 \leq l < k \leq g.
	\end{align*}
\end{theorem}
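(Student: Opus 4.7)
My plan is to combine positivity of the individual entries of $\mathbf{S}_k$ with a three-term determinantal recursion, and then resolve the subtractive nature of the recursion via a Cauchy--Binet identity relating submatrices of $\TT_k$ to those of $\RR_k$.

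First I would establish that the sub- and superdiagonals of $\mathbf{S}_k$, namely $\gamma_j^{(2)}$ and $\epsilon_j = s_{j-2}\beta_j$, are positive; this is immediate from their definitions. For the diagonal, I write $\delta_j^{(2)} = c_{j-1}\delta_j^{(1)} + s_{j-1}\alpha_j = -c_{j-1}c_{j-2}\beta_j + s_{j-1}\alpha_j$. The running assumption $c_{j-1}\gamma_j^{(1)} < 0$ for $1 \leq j \leq k$, together with $c_0 = -1$, forces inductively $\mathrm{sign}(c_j) = (-1)^{j+1}$, so $c_{j-1}c_{j-2} < 0$. Combined with $s_{j-1}, \beta_j > 0$ and $\alpha_j = \TT_j[j,j] > 0$ (via \cref{thm:determinant_T}), both terms defining $\delta_j^{(2)}$ are positive. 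This already handles the case $l = 1$: $q_{(k,1)} = \delta_k^{(2)} > 0$.

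For $l \geq 2$, expanding the $l \times l$ determinant $q_{(k,l)}$ along its last row gives the three-term recursion
\[
q_{(k,l)} = \delta_k^{(2)}\, q_{(k-1, l-1)} - \gamma_{k-1}^{(2)}\epsilon_k\, q_{(k-2, l-2)},
\]
with $q_{(k,0)} = 1$. A direct induction on $l$ fails because the second term is subtracted and both are positive by the inductive hypothesis. To resolve this, I would appeal to the factorization $\TT_k = \QQ_k^{\T}[1{:}k, 1{:}k]\,\RR_k$ and apply Cauchy--Binet to $\TT_k[I, J]$ with $I = \{k-l, \dots, k-1\}$ and $J = \{k-l+1, \dots, k\}$. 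Direct inspection shows $\TT_k[I, J]$ is lower triangular with diagonal $\beta_{k-l+1}, \dots, \beta_k$, so $\det(\TT_k[I,J]) = \prod_{j\in J}\beta_j > 0$. Cauchy--Binet then produces
\[
\prod_{j\in J}\beta_j = \det\bigl(\QQ_k^{\T}[I, I]\bigr)\,q_{(k,l)} + \sum_{S \ne I} \det\bigl(\QQ_k^{\T}[I, S]\bigr)\,\det\bigl(\RR_k[S, J]\bigr).
\]
Using the upper-Hessenberg structure of $\QQ_k^{\T}$ (its subdiagonal entries being $s_1, \dots, s_k$) together with the explicit product form of its off-Hessenberg entries, I would show $\det(\QQ_k^{\T}[I, I]) = (-1)^l c_{k-l-1}c_{k-1}$, which is positive by the sign-alternation of $c_j$.

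The main obstacle will then be to control the residual sum over $S \ne I$. Since $\RR_k$ is upper triangular with bandwidth three and $\QQ_k^{\T}$ is upper Hessenberg, only $S$ lying in a narrow band around $I$ contribute nonzero determinants, and each such term factorizes as an explicit monomial in $\{c_j, s_j\}$ times a product of positive quantities drawn from $\{\gamma_j^{(2)}, \delta_j^{(2)}, \epsilon_j\}$. I would group these contributions by the ``shift pattern'' of $S$ relative to $I$, and exploit systematic sign cancellations rooted in the alternation of $\{c_j\}$ and the identity $c_j^2 + s_j^2 = 1$ to consolidate them. The goal is to show that the residual sum is outweighed by the leading $\prod_{j\in J}\beta_j$ term, which when combined with the positivity of $\det(\QQ_k^{\T}[I,I])$ forces $q_{(k,l)} > 0$.
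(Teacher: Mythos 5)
Your approach — applying Cauchy--Binet to the factorization $\TT_{k} = (\QQ_{k}^{\T})[1{:}k,1{:}k]\,\RR_{k}$ and singling out the $S=I$ term — is genuinely different from the paper's, which proves an explicit closed-form ansatz for $q_{(k,l)}$ in terms of the principal minors $p_{(k,l)}$ of $\TT_{k}$ and verifies it by a single-variable induction in $l$ via the recursion $q_{(k,l)} = \delta_{k-l+1}^{(2)}q_{(k,l-1)} - \gamma_{k-l+1}^{(2)}\epsilon_{k-l+2}q_{(k,l-2)}$. Your preliminary computations are sound: positivity of the entries $\gamma_j^{(2)},\delta_j^{(2)},\epsilon_j$ under the running no-NPC assumption is correct (matching Lemma~\ref{lemma:basic}), the identification $\det(\RR_k[I,J]) = q_{(k,l)}$ is right, and $\TT_k[I,J]$ is indeed lower triangular with determinant $\prod_{j\in J}\beta_j$.

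However, there is a genuine gap, and you yourself flag it as "the main obstacle": the residual sum over $S\ne I$ is not controlled. What you write is a plan, not a proof. The quantity you need is
\begin{align*}
q_{(k,l)} = \frac{\prod_{j\in J}\beta_j - \sum_{S\ne I}\det\bigl(\QQ_k^{\T}[I,S]\bigr)\det\bigl(\RR_k[S,J]\bigr)}{\det\bigl(\QQ_k^{\T}[I,I]\bigr)},
\end{align*}
and to conclude $q_{(k,l)}>0$ you must show the numerator is strictly positive, i.e.\ the residual sum is strictly less than $\prod_{j\in J}\beta_j$. Because $\RR_k$ is upper triangular with bandwidth three, many $S$ near $J$ contribute, and the determinants $\det(\QQ_k^{\T}[I,S])$ are signed products in $\{c_j,s_j\}$ that can be of either sign. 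There is no obvious comparison that bounds this sum by the leading term: the terms do not telescope, and the identity $c_j^2+s_j^2=1$ by itself does not force a one-sided bound. Invoking "systematic sign cancellations" and "grouping by shift pattern" is a program, not an argument; absent the actual cancellation lemma, one cannot conclude. Note also that the positive entries of $\mathbf{S}_k$ are of no direct help, since the recursion $q_{(k,l)}=\delta_k^{(2)}q_{(k-1,l-1)}-\gamma_{k-1}^{(2)}\epsilon_k q_{(k-2,l-2)}$ has a minus sign and positivity of a matrix's entries does not yield positivity of its determinant — which is exactly why the paper needs the much finer ansatz \cref{eq:det_R_formula} that expresses $q_{(k,l)}$ as a sum of manifestly signed terms involving $p_{(k,i)}$ and $\gamma_j^{(1)}$. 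To salvage your route you would have to supply the missing estimate explicitly; as it stands the proof does not close.
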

\begin{proof}
	We give an outline of the proof. The detailed proof is technical and can be found in \cref{sec:appendix}. The proof makes use of the three-term recurrence relation for the determinant of a tridiagonal matrix (e.g.,  \cite[\S 0.9.10]{horn2012matrix} and \cite[Theorem 2.1]{el2004inverse}), namely 
	\begin{align}
		\label{eq:det_iters}
		q_{(k,l)} = \delta_{k-l+1}^{(2)}q_{(k,l-1)} - \gamma_{k-l+1}^{(2)} \epsilon_{k-l+2} q_{(k,l-2)}, \quad l = 1, \ldots, k - 1,
	\end{align}
	where we have set $ q_{(k,-1)} = 0$ and $q_{(k,0)} = 1 $. 
	We then consider the following ansatz for $ q_{(k,l)} $,
	\begin{align}
		\label{eq:det_R_formula}
		q_{(k,l)} = \left( \frac{p_{(k,l)}}{\prod_{i=1}^{l} \gamma_{k-i}^{(2)}} + c_{k-l-1} \sum_{i=1}^{l} \frac{(-1)^{l-i+1} \gamma_{k-i}^{(1)} p_{(k,i-1)}}{\prod_{j=1}^{i} \gamma_{k-j}^{(2)}} \right) \prod_{i=1}^{l} \beta_{k-i+1},
	\end{align}
	where $ p_{(k,l)} $ is defined as in \cref{eq:det_T_R:T}, and we set $ p_{(k,-1)} = 0$ and $p_{(k,0)} = 1 $.    The proof follows by showing that \cref{eq:det_R_formula} is indeed a correct solution to \cref{eq:det_iters}, which is done by induction. The last part of the proof involves showing that \cref{eq:det_R_formula} is positive by establishing the sign of the individual terms in the sum.
\end{proof}

Although \cref{thm:determinants} is a crucial ingredient in the proof of the main results of \cref{sec:monotonicity}, it can be in fact of independent interest.

\subsection{Monotonicity properties}
\label{sec:monotonicity}

When applied to positive definite systems, CG enjoys several monotonicity properties, which have historically motivated its use as subproblem solver within the optimization literature. For example, the fact that, starting with $ \xx_{0} = \zero $, CG iterates have increasing Euclidean norm while monotonically decreasing the quadratic $ \dotprod{\xx,\AA\xx}/2 - \dotprod{\xx,\bb} $ has been heavily leveraged within the trust-region framework, e.g., CG-Steihaug method within trust-region framework  \cite{steihaug1983conjugate,conn2000trust} with $ \AA $ and $ -\bb $ being the Hessian and the gradient of the objective function, respectively. For the special case where $ \AA \succ \zero $, \cite{fong2012cg} provides such monotonicity properties for MINRES. This is done indirectly by analyzing CR and leveraging the fact that, when $ \AA \succ \zero $, MINRES and CR are in fact equivalent algorithms. In this section, we extend those results to the case where $ \AA $ is any symmetric and possibly indefinite and/or singular matrix. By direct analysis of the MINRES algorithm, we show that, as long as nonpositive curvature has not been detected, MINRES satisfies very similar monotonicity properties, which can be useful within various optimization frameworks. The results of this section depend on several technical lemmas, which are given in \cref{sec:appendix}.

\cref{thm:xTr_monotonicity} sheds light on the sign and monotonicity of various relevant quantities of interest. For example, from the construction of CG, when $\AA \succ \zero$, we always have that the quadratic $ \dotprod{\xx,\bb} - \dotprod{\xx,\AA\xx}/2 $ remains positive. \cref{thm:xTr_monotonicity} shows that, as long as the NPC condition \cref{eq:c_gama_nc} has not been detected within MINRES, we can expect to see a similar but meaningfully different (\cref{rem:qaud}) quadratic to remain positive.

\begin{theorem}
	\label{thm:xTr_monotonicity}
	Let $ g $ be the grade of $ \bb $ with respect to $ \AA $  as in \cref{def:grade_b}. As long as the NPC condition \cref{eq:c_gama_nc} has not been detected for $1 \leq k < g $, we have
	\begin{subequations}
		\label{eq:xTr_monotonicity}
		\begin{align}
			\xx_{k}^{\T} \bb - \xx_{k}^{\T} \AA \xx_{k} &> 0, \label{eq:xTr_monotonicity1} \\
			\xx_{k}^{\T} \rr_{k} > \xx_{k-1}^{\T} \rr_{k} &\geq 0, \label{eq:xTr_monotonicity2}\\ \xx_{k}^{\T} \rr_{k-1} > \xx_{k}^{\T} \rr_{k} &> 0, \label{eq:xTr_monotonicity3}
		\end{align}
	\end{subequations}
	and the equality in \cref{eq:xTr_monotonicity2} holds only at $ k = 1 $.
\end{theorem}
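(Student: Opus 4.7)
My plan is to reduce the three inequalities to two algebraic facts via identities coming from the MINRES recursion, and then prove those facts using the cofactor structure of $\RR_{k}$ together with \cref{thm:determinants}. Let $a_{k} \defeq \xx_{k}^{\T} \rr_{k}$. Substituting the residual recursion $\rr_{k} = s_{k}^{2} \rr_{k-1} - \phi_{k} c_{k} \vv_{k+1}$ into $\xx_{k-1}^{\T} \rr_{k}$ and using $\xx_{k-1} \in \mathcal{K}_{k-1} \perp \vv_{k+1}$ yields $\xx_{k-1}^{\T} \rr_{k} = s_{k}^{2} a_{k-1}$. Inverting the same recursion for $\rr_{k-1}$, which is permissible because $k < g$ forces $s_{k} > 0$, and then using $\xx_{k} \perp \vv_{k+1}$ gives $\xx_{k}^{\T} \rr_{k-1} = a_{k}/s_{k}^{2}$. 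Combined with $\xx_{k} = \xx_{k-1} + \tau_{k} \dd_{k}$, this produces the master recursion
\[
a_{k} = s_{k}^{2} a_{k-1} + \tau_{k} \dd_{k}^{\T} \rr_{k}.
\]
All three target inequalities then collapse to the claims $a_{k} > 0$, $\tau_{k} \dd_{k}^{\T} \rr_{k} > 0$, and $s_{k} < 1$; the last is equivalent to $c_{k} \neq 0$, which is immediate from the NPC-not-detected hypothesis $c_{k-1}\gamma_{k}^{(1)} < 0$ via $c_{k} = \gamma_{k}^{(1)}/\gamma_{k}^{(2)}$.

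\textbf{Induction.} The base case $a_{0} = 0$ is immediate, so once $\tau_{k} \dd_{k}^{\T} \rr_{k} > 0$ is established at every relevant step, the master recursion delivers $a_{k} > 0$ for $k \geq 1$. Moreover $\xx_{k-1}^{\T} \rr_{k} = s_{k}^{2} a_{k-1}$ vanishes only at $k=1$ (since $\xx_{0} = \zero$) and is strictly positive for $k \geq 2$, which matches the equality characterisation in \cref{eq:xTr_monotonicity2}.

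\textbf{Key sign computation.} Expand $\dd_{k} = \VV_{k} (\RR_{k}^{-1})_{:,k}$ and $\rr_{k} = \phi_{k} \VV_{k+1} \uu_{k}$ in the Lanczos basis; the coefficients $u_{k,i} = -c_{i-1} \prod_{j=i}^{k} s_{j}$ fall out of the block structure of $\QQ_{k}$. Orthonormality of $\VV_{k+1}$ gives
\[
\dd_{k}^{\T} \rr_{k} = \phi_{k} \sum_{i=1}^{k} (\RR_{k}^{-1})_{ik} \, u_{k,i}.
\]
A cofactor expansion identifies the minor of $\RR_{k}$ obtained by deleting row $k$ and column $i$ as block upper-triangular with diagonal blocks $\RR_{i-1}$ and the $(k-i)\times(k-i)$ matrix whose determinant is $q_{(k,k-i)}$ from \cref{thm:determinants}, producing
\[
(\RR_{k}^{-1})_{ik} = \frac{(-1)^{k-i}\, q_{(k,k-i)}}{\prod_{j=i}^{k} \gamma_{j}^{(2)}}.
\]
Each summand therefore equals $(-1)^{k-i+1} c_{i-1} q_{(k,k-i)}$ times a strictly positive quantity. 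Propagating $c_{0} = -1$ through $c_{i-1}\gamma_{i}^{(1)} < 0$ with $\gamma_{i}^{(2)} > 0$ forces $c_{i-1}$ to have sign $(-1)^{i}$, and \cref{thm:determinants} supplies $q_{(k,k-i)} > 0$; hence every term carries the common sign $(-1)^{k+1}$. Since $\tau_{k} = c_{k} \phi_{k-1}$ has the same sign $(-1)^{k+1}$, the product $\tau_{k} \dd_{k}^{\T} \rr_{k}$ is strictly positive, closing the induction.

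\textbf{Main obstacle.} The crux is the sign computation: all $k$ summands of $\dd_{k}^{\T} \rr_{k}$ must be shown to share the same sign simultaneously, ruling out any cancellation. This hinges on the precise interplay between the parity factor $(-1)^{k-i+1}$ in the cofactor formula for $(\RR_{k}^{-1})_{ik}$ and the alternation pattern of $c_{i-1}$ forced by the NPC condition; without the positivity of $q_{(k,\cdot)}$ guaranteed by \cref{thm:determinants}, the sign of each individual summand would be indeterminate, so that theorem is the essential structural input to the whole argument.
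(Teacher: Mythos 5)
Your proof is correct and follows the same overall skeleton as the paper's: an induction on $a_k \defeq \xx_k^{\T}\rr_k$ driven by the master recursion $a_k = s_k^2 a_{k-1} + \tau_k\dd_k^{\T}\rr_k$, with everything reducing to the single positivity claim $\tau_k\dd_k^{\T}\rr_k > 0$. Where you differ from the paper is in how you establish that key positivity. The paper factors it through a chain of auxiliary lemmas: \cref{lemma:tvr} (which pins down the sign of $\tau_k\vv_{k-i}^{\T}\rr_{k-j}$ by unrolling the residual recursion \cref{eq:rk_rec} step by step) and \cref{lemma:tau_d} (which expands $\dd_k$ in the Lanczos basis by unwinding the three-term recurrence for $\dd_k$, with the coefficients identified as the minors $q_{(k,l)}$ via the determinant recurrence). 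You instead obtain the same coefficients in one shot: the Lanczos coefficients of $\dd_k$ drop out of a cofactor expansion of $\RR_k^{-1}$ (exploiting the block upper-triangular structure of the minor, whose bottom-right block is exactly the matrix defining $q_{(k,k-i)}$), while the Lanczos coefficients of $\rr_k$ come from the explicit last row of $\QQ_k$, $u_{k,i} = -c_{i-1}\prod_{j=i}^k s_j$, read off the recursive block structure \cref{eq:block_Q}. This is cleaner and more self-contained for the case $j=0$ needed here, and the cofactor route to $(\RR_k^{-1})_{ik} = (-1)^{k-i}q_{(k,k-i)}/\prod_{j=i}^k\gamma_j^{(2)}$ is arguably a more transparent derivation of \cref{lemma:tau_d} than the paper's inductive unwinding. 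What the paper's more modular route buys is reuse: \cref{lemma:tau_dTr} is proved at full generality ($0 \le j < k$) precisely because \cref{thm:A_norm} also needs $\tau_k\dd_k^{\T}\rr_{k-1} > 0$, so the paper amortizes the work across two theorems. Both routes rest on \cref{thm:determinants} in exactly the same way — the positivity of all the $q_{(k,l)}$ is the indispensable structural input that prevents cancellation across the $k$ summands — so you have correctly identified the load-bearing ingredient.
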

\begin{proof}
	We prove \cref{eq:xTr_monotonicity1,eq:xTr_monotonicity2} by induction. For $ k = 1 $, we have 
	\begin{align*}
		\xx_{1}^{\T} \rr_{1} &= (\xx_{0} + \tau_{1} \dd_{1})^{\T} \rr_{1} = \xx_{0}^{\T} \rr_{1} + \tau_{1} \dd_{1}^{\T} \rr_{1} > \xx_{0}^{\T} \rr_{1} = 0,
	\end{align*}    
	where the inequality follows from \cref{lemma:tau_dTr}. Now suppose \cref{eq:xTr_monotonicity1,eq:xTr_monotonicity2} holds for some $ k-1 $ with $ k > 1 $. Using the fact that $ \xx_{k-1} \in \mathcal{K}_{k-1}(\AA,\bb) \perp \vv_{k+1} $ as well as \cref{lemma:tau_dTr}, we get 
	\begin{align*}
		\xx_{k}^{\T} \rr_{k} = (\xx_{k-1} + \tau_{k} \dd_{k})^{\T} \rr_{k} = \xx_{k-1}^{\T} \rr_{k} + \tau_{k} \dd_{k}^{\T} \rr_{k} > \xx_{k-1}^{\T} \rr_{k} = \xx_{k-1}^{\T} \left( s_{k}^2 \rr_{k-1} - \phi_{k} c_{k} \vv_{k+1} \right) = s_{k}^2 \xx_{k-1}^{\T}  \rr_{k-1} > 0,
	\end{align*}
	where the last strict inequality follows from the inductive hypothesis and the fact that for any $1 <  k < g $ we must have  $ \beta_{k+1} \neq 0$, which in turn implies $ s_{k} \neq 0 $. This gives us both \cref{eq:xTr_monotonicity1,eq:xTr_monotonicity2}.
	
	Finally, by \cref{eq:s_c},
	\begin{align*}
		\xx_{k}^{\T} \rr_{k} = \xx_{k}^{\T} (s_{k}^2 \rr_{k-1} - \phi_{k} c_{k} \vv_{k+1}) = s_{k}^2 \xx_{k}^{\T} \rr_{k-1} < \xx_{k}^{\T} \rr_{k-1},
	\end{align*}
	which proves \cref{eq:xTr_monotonicity3}. 
\end{proof}

\vspace{1mm}
\begin{remark}
	\label{rem:qaud}
	Suppose $ \AA \succ \zero $. For the iterates of CG, we have $\dotprod{\xx_{k}, \bb} \geq \dotprod{\xx_{k}, \AA \xx_{k}}/2$. However, \cref{eq:xTr_monotonicity1} implies that the iterates of MINRES satisfy $\dotprod{\xx_{k}, \bb} \geq \dotprod{\xx_{k}, \AA \xx_{k}}$ (without the factor $1/2$), which is a stronger property. 
	In the context of solving $ \min_{\ww} f(\ww) $, this can have significant consequences for optimization algorithms that seek a descent direction at every iteration. Indeed, suppose $\AA = \nabla^{2} f(\ww)$ and $\bb = - \nabla f(\ww)$. From \cref{eq:xTr_monotonicity1}, it follows that the angle between the iterates of MINRES and the gradient of $ f $ is expected to be more negative than that given by the iterates of CG. In this light, loosely speaking, MINRES can produce better directions in that the amount of descent can be greater than the equivalent directions obtained from CG. 
\end{remark}

\vspace{1mm}
\begin{remark}
	\label{rem:monoton01}
	The conclusions of \cref{thm:xTr_monotonicity} hold only for when $ k < g $. For the last iteration $ k=g $, we need to consider two separate cases. 
	\begin{enumerate}[label = {\bfseries (\roman*)}]
		\item When $ \bb \in \range(\AA) $, if the NPC condition \cref{eq:c_gama_nc} has not been detected for all $ g $ iterations, since $ \rr_{g} = \zero $, we get
		\begin{align*}
			\xx_{g}^{\T} \rr_{g-1} > \xx_{g}^{\T} \rr_{g} = \xx_{g-1}^{\T} \rr_{g} = 0.
		\end{align*}
		\item When $ \bb \notin \range(\AA) $, if the NPC condition \cref{eq:c_gama_nc} has not been detected for all $ g-1 $ iterations, it is guaranteed to be detected at the $ g\th $ iteration (\cref{lemma:Lanczos}). In this case, we have $ \gamma^{(2)}_{g} = 0 $, which implies $ \tau_{g} = 0 $, and as a result $ \xx_{g} = \xx_{g-1} $ and $ \rr_{g} = \rr_{g-1} $. Now, it can be seen from the proof of \cref{thm:xTr_monotonicity} that
		\begin{align*}
			\xx_{g}^{\T} \bb - \xx_{g}^{\T} \AA \xx_{g} = \xx_{g}^{\T} \rr_{g} = \xx_{g-1}^{\T} \rr_{g} =
			\xx_{g}^{\T} \rr_{g-1} = \xx_{g-1}^{\T} \rr_{g-1} > 0.
		\end{align*}
	\end{enumerate}
\end{remark}

\cref{thm:A_norm} provides a certain set of monotonicity results for MINRES, which for the special case of a positive definite matrix $ \AA $, mimic those of CG.

\begin{theorem}
	\label{thm:A_norm} 
	Let $ g $ be the grade of $ \bb $ with respect to $ \AA $ as in \cref{def:grade_b}. As long as the NPC condition \cref{eq:c_gama_nc} has not been detected for $1 \leq k < g $, the following monotonicity results hold.
	\begin{enumerate}[label = {\bfseries (\alph*)}]
		\item \label{thm:A_norm:energy_norm} If $ \AA \xx^{\star} = \bb $ for some $ \xx^{\star} \in \real^{d} $, i.e., $ \bb \in \range(\AA) $, then $ \| \xx^{\star} - \xx_{k} \|_{\AA} $ decreases strictly monotonically with $ k $.
		\item \label{thm:A_norm:energy_norm_omega} If $ \AA \xx^{\star} = \bb $ for some $ \xx^{\star} \in \real^{d} $, i.e., $ \bb \in \range(\AA) $, then $ \| \xx^{\star} - (\xx_{k-1} + \omega \tau_{k} \dd_{k})  \|_{\AA} $ decreases strictly monotonically over $ \omega \in (-\infty, 1] $. 
		\item $ m(\xx_{k}) \triangleq \dotprod{\xx_{k}, \AA \xx_{k}}/2 - \dotprod{\bb,\xx_{k}} $ decreases strictly monotonically with $ k $.
		\item $ m(\xx_{k-1} + \omega \tau_{k} \dd_{k}) $ decreases strictly monotonically over $ \omega \in (-\infty, 1] $.
		\item $ \| \xx_{k} \| $ increases strictly monotonically with $ k $.
		\item $ \| \xx_{k-1} + \omega \tau_{k} \dd_{k} \| $ increases strictly monotonically over $ \omega \in [0, \infty) $.
		\item $ \xx_{k}^{\T} \bb $ increases strictly monotonically with $ k $.
	\end{enumerate}
\end{theorem}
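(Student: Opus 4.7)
The strategy is to route every claim through the MINRES update $\xx_k = \xx_{k-1} + \tau_k \dd_k$ together with its differential version $\tau_k \AA \dd_k = \rr_{k-1} - \rr_k$ (obtained by applying $\AA$ to the update and subtracting from $\bb$), and then reduce each item to a sign determination of one of a handful of scalar inner products. A first observation is that (a) and (b) are corollaries of (c) and (d): whenever $\bb \in \range(\AA)$, picking any $\xx^\star$ with $\AA \xx^\star = \bb$ and expanding yields
\begin{align*}
    \| \xx^\star - \xx \|_{\AA} \;=\; \xx^{\star\T} \bb \;+\; 2\,m(\xx),
\end{align*}
well-defined independently of the choice of $\xx^\star$. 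Hence $\| \xx^\star - \xx \|_{\AA}$ is a strictly increasing affine function of $m(\xx)$, both globally in $k$ and along any line through $\dd_k$, so it suffices to prove (c)--(g).

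For (c), substituting the update into $m$ and using $\tau_k \AA \dd_k = \rr_{k-1} - \rr_k$ gives
\begin{align*}
    m(\xx_k) - m(\xx_{k-1}) \;=\; -\tau_k \dd_k^\T \rr_{k-1} + \tfrac{\tau_k^2}{2}\,\dd_k^\T \AA \dd_k \;=\; -\tfrac{1}{2}\bigl(\tau_k \dd_k^\T \rr_{k-1} + \tau_k \dd_k^\T \rr_k\bigr),
\end{align*}
so the claim reduces to $\tau_k \dd_k^\T \rr_{k-1} > 0$ and $\tau_k \dd_k^\T \rr_k \geq 0$ under the NPC-not-detected hypothesis; these are the same positivities already invoked in the proof of \cref{thm:xTr_monotonicity} via the technical lemmas of \cref{sec:appendix}. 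For (d), the map $\omega \mapsto m(\xx_{k-1} + \omega \tau_k \dd_k)$ is a convex parabola whose leading coefficient $\tfrac{1}{2}\tau_k^2 \dd_k^\T \AA \dd_k = \tfrac{1}{2}\tau_k (\dd_k^\T \rr_{k-1} - \dd_k^\T \rr_k)$ is strictly positive, and whose unique minimizer $\omega^\star = \dd_k^\T \rr_{k-1}/(\tau_k\,\dd_k^\T \AA \dd_k)$ satisfies $\omega^\star \geq 1$ iff $\tau_k \dd_k^\T \rr_k \geq 0$, giving strict decrease on $(-\infty, 1]$. Part (g) is immediate from $\xx_k^\T \bb - \xx_{k-1}^\T \bb = \tau_k \dd_k^\T \bb = \beta_1 \tau_k (\RR_k^{-1})_{1,k}$, obtained by writing $\bb = \beta_1 \vv_1$ and using $\VV_k = \DD_k \RR_k$; the sign of the top-right entry of $\RR_k^{-1}$ is fixed by Cramer's rule together with the positivity of $q_{(k,k-1)} = \det(\mathbf{S}_k)$ delivered by \cref{thm:determinants}.

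For (e)--(f), writing
\begin{align*}
    \| \xx_{k-1} + \omega \tau_k \dd_k \|^2 \;=\; \| \xx_{k-1} \|^2 + 2 \omega \tau_k \xx_{k-1}^\T \dd_k + \omega^2 \tau_k^2 \| \dd_k \|^2
\end{align*}
exhibits a convex parabola in $\omega$ whose derivative at the origin is $2\tau_k \xx_{k-1}^\T \dd_k$; under the NPC-not-detected hypothesis one has $\gamma_k^{(2)} \neq 0$, hence $\dd_k \neq \zero$ and the $\omega^2$ coefficient is strictly positive. Consequently (f), and (e) as its $\omega = 1$ specialization, follow once we certify $\tau_k \xx_{k-1}^\T \dd_k \geq 0$. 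To get this I would expand $\xx_{k-1} = \DD_{k-1} \ttt_{k-1}$ and use the three-term recursion $\gamma_k^{(2)} \dd_k = \vv_k - \delta_k^{(2)} \dd_{k-1} - \epsilon_k \dd_{k-2}$ to reduce $\tau_k \xx_{k-1}^\T \dd_k$ to a linear combination of entries of $\DD_{k-1}^\T \DD_{k-1}$ and $\DD_{k-1}^\T \vv_k$, whose signs are once again governed by the determinantal positivity in \cref{thm:determinants}.

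\textbf{Main obstacle.} The algebraic identities above do essentially no work on their own; each item ultimately collapses to signing one of $\tau_k \dd_k^\T \rr_{k-1}$, $\tau_k \dd_k^\T \rr_k$, $\tau_k \xx_{k-1}^\T \dd_k$, or $\tau_k \dd_k^\T \bb$. None of these signs are visible by inspection from the MINRES recursion: they require unfolding $\dd_k$ via $\DD_k = \VV_k \RR_k^{-1}$ and tracking the signs of minors of $\RR_k$, complicated by the case analysis forced by $\tau_k = c_k \phi_{k-1}$ with $c_k$ of indefinite sign. \cref{thm:determinants} is precisely the bridge from those minor signs to the required positivities, and I expect the bulk of the effort to lie in this bookkeeping rather than in the difference computations themselves.
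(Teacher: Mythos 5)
Your outline is correct and, at bottom, rides on the same machinery the paper uses: every item is telescoped into a sign claim about one of $\tau_k\dd_k^{\T}\rr_{k-1}$, $\tau_k\dd_k^{\T}\rr_k$, $\tau_k\dd_k^{\T}\xx_{k-1}$, or $\tau_k\dd_k^{\T}\bb$, and those signs are delivered by \cref{thm:determinants} together with Lemmas \ref{lemma:tau_dTr}--\ref{lemma:tau_dTb}. You have correctly identified that all the real content lives there; your difference computations for (c) and (d) are accurate, as is the minimizer $\omega^{\star} = \dd_k^{\T}\rr_{k-1}/(\tau_k\dd_k^{\T}\AA\dd_k)$ and the equivalence $\omega^\star \geq 1 \iff \tau_k\dd_k^{\T}\rr_k \geq 0$.

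Where you genuinely depart from the paper, to your advantage, is the observation that with $\|\cdot\|_{\AA}$ denoting $\xx^{\T}\AA\xx$ (as the paper's notation section stipulates), the identity
\[
\|\xx^{\star} - \xx\|_{\AA} \;=\; \xx^{\star\T}\bb + 2\,m(\xx)
\]
reduces items (a) and (b) outright to (c) and (d). The paper instead redoes the telescoping computation for (a)/(b), arriving at $-\tau_k\dd_k^{\T}\rr_k - \tau_k\dd_k^{\T}\rr_{k-1}$, which is just twice the (c)/(d) expression $-\tfrac12\tau_k\dd_k^{\T}\rr_k - \tfrac12\tau_k\dd_k^{\T}\rr_{k-1}$; your affine reduction makes that redundancy explicit and eliminates two of the seven computations. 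Your Cramer's-rule route for (g), $\tau_k\dd_k^{\T}\bb = \beta_1\tau_k(\RR_k^{-1})_{1,k}$ with $(\RR_k^{-1})_{1,k} = (-1)^{k+1}q_{(k,k-1)}/\prod_j\gamma_j^{(2)}$ and sign of $\tau_k$ pinned by \cref{eq:c_gama_tau_c}, is mathematically the same as \cref{lemma:tau_dTb} but stated in a way that makes the role of $q_{(k,k-1)} = \det\mathbf{S}_k$ more transparent. One small imprecision: after rewriting $\tfrac12\tau_k^2\dd_k^{\T}\AA\dd_k$ as $\tfrac12\tau_k(\dd_k^{\T}\rr_{k-1} - \dd_k^{\T}\rr_k)$ you declare it strictly positive, but that rewriting by itself does not show positivity (it is a difference of two nonnegative terms); the positivity is best obtained directly from $\dd_k \in \mathcal{K}_k(\AA,\bb)$ and $\TT_k \succ \zero$ (via \cref{thm:determinant_T}), which is exactly what the paper invokes in its proof of (b). Similarly, for (e)/(f) you correctly note that $\tau_k\dd_k^{\T}\xx_{k-1} \geq 0$ suffices (with equality forced at $k=1$ since $\xx_0 = \zero$), and you sketch rather than carry out the reduction to minor signs; this is the content of \cref{lemma:tau_dTx}, so the outline is sound but not self-contained.
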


\begin{proof}
	\hfill
	\begin{enumerate}[label = {\bfseries (\alph*)}]
		\item We have
		\begin{align*}
			\vnorm{\xx^{\star} - \xx_{k}}_{\AA}^2 - \vnorm{\xx^{\star} - \xx_{k-1}}_{\AA}^2 &= \left( \xx^{\star} - \xx_{k} \right)^{\T} \AA \left( \xx^{\star} - \xx_{k} \right) - \left( \xx^{\star} - \xx_{k-1} \right)^{\T} \AA \left( \xx^{\star} - \xx_{k-1} \right) \\
			&= - 2 \xx_{k}^{\T} \bb + \xx_{k}^{\T} \AA \xx_{k} + 2 \xx_{k-1}^{\T} \bb - \xx_{k-1}^{\T} \AA \xx_{k-1} \\
			&= - 2 \left( \xx_{k-1} + \tau_{k} \dd_{k} \right)^{\T} \bb + \left( \xx_{k-1} + \tau_{k} \dd_{k} \right)^{\T} \AA \left( \xx_{k-1} + \tau_{k} \dd_{k} \right) \\
			& \quad + 2 \xx_{k-1}^{\T} \bb - \xx_{k-1}^{\T} \AA \xx_{k-1} \\
			&= - 2 \tau_{k} \dd_{k}^{\T} \bb + \tau_{k} \dd_{k}^{\T} \AA \xx_{k} +  \tau_{k} \dd_{k}^{\T} \AA \xx_{k-1} = - \tau_{k} \dd_{k}^{\T} \rr_{k} - \tau_{k} \dd_{k}^{\T} \rr_{k-1} < 0,
		\end{align*}
		where the last inequality comes from \cref{lemma:tau_dTr}.
		\item Let $ \omega_{1} < \omega_{2} \leq 1 $. We have
		\begin{align*}
			&\quad \vnorm{\xx^{\star} - (\xx_{k-1} + \omega_{2} \tau_{k} \dd_{k})}_{\AA}^2 - \vnorm{\xx^{\star} - (\xx_{k-1} + \omega_{1} \tau_{k} \dd_{k})}_{\AA}^2 \\
			&= - 2 (\omega_{2} - \omega_{1}) \tau_{k} \dd_{k}^{\T} \bb + 2 (\omega_{2} - \omega_{1}) \tau_{k} \dd_{k} \AA \xx_{k-1} + (\omega_{2}^2 - \omega_{1}^2) \tau_{k}^2 \dd_{k}^{\T} \AA \dd_{k} \\
			&< - 2 (\omega_{2} - \omega_{1}) \tau_{k} \dd_{k}^{\T} \bb + 2 (\omega_{2} - \omega_{1}) \tau_{k} \dd_{k} \AA \xx_{k-1} + 2 (\omega_{2} - \omega_{1}) \tau_{k}^2 \dd_{k}^{\T} \AA \dd_{k} \\
			&= - 2 (\omega_{2} - \omega_{1}) \tau_{k} \dd_{k}^{\T} \bb + 2 (\omega_{2} - \omega_{1}) \tau_{k} \dd_{k} \AA \xx_{k} = - 2 (\omega_{2} - \omega_{1}) \tau_{k} \dd_{k}^{\T} \rr_{k} < 0,
		\end{align*}
		where the first inequality follows from the facts that $ \omega_{2} + \omega_{1} < 2 $, $ (\omega_{2}^2 - \omega_{1}^2) = (\omega_{2} + \omega_{1}) (\omega_{2} - \omega_{1})$, and $ \dd_{k}^{\T} \AA \dd_{k} > 0 $, which is due to $ \dd_{k} \in \mathcal{K}_{k}(\AA, \bb) $ and $ \TT_{k} \succ \zero $ by \cref{thm:determinant_T}, and the last inequality is given using \cref{lemma:tau_dTr}. 
		\item Similar to the proof of \labelcref{thm:A_norm:energy_norm}, we have
		\begin{align*}
			m(\xx_{k}) - m(\xx_{k-1}) &= - \xx_{k}^{\T} \bb + \hf \xx_{k}^{\T} \AA \xx_{k} + \xx_{k-1}^{\T} \bb - \hf \xx_{k-1}^{\T} \AA \xx_{k-1} \\
			&= - \left( \xx_{k-1} + \tau_{k} \dd_{k} \right)^{\T} \bb + \hf \left( \xx_{k-1} + \tau_{k} \dd_{k} \right)^{\T} \AA \left( \xx_{k-1} + \tau_{k} \dd_{k} \right) + \xx_{k-1}^{\T} \bb - \hf \xx_{k-1}^{\T} \AA \xx_{k-1} \\
			&= - \tau_{k} \dd_{k}^{\T} \bb + \hf \tau_{k} \dd_{k}^{\T} \AA \xx_{k} +  \hf \tau_{k} \dd_{k}^{\T} \AA \xx_{k-1} = - \hf \tau_{k} \dd_{k}^{\T} \rr_{k} - \hf \tau_{k} \dd_{k}^{\T} \rr_{k-1} < 0,
		\end{align*}
		where again the last inequality follows from \cref{lemma:tau_dTr}.
		\item Similar to the proof of \labelcref{thm:A_norm:energy_norm_omega}, let $ \omega_{1} < \omega_{2} \leq 1 $ and we have
		\begin{align*}
			& \quad m(\xx_{k-1} + \omega_{2} \tau_{k} \dd_{k}) - m(\xx_{k-1} + \omega_{1} \tau_{k} \dd_{k}) \\
			&= - (\xx_{k-1} + \omega_{2} \tau_{k} \dd_{k})^{\T} \bb + \hf (\xx_{k-1} + \omega_{2} \tau_{k} \dd_{k})^{\T} \AA (\xx_{k-1} + \omega_{2} \tau_{k} \dd_{k}) \\
			&\quad + (\xx_{k-1} + \omega_{1} \tau_{k} \dd_{k})^{\T} \bb - \hf (\xx_{k-1} + \omega_{1} \tau_{k} \dd_{k})^{\T} \AA (\xx_{k-1} + \omega_{1} \tau_{k} \dd_{k}) \\
			&= - (\omega_{2} - \omega_{1}) \tau_{k} \dd_{k}^{\T} \bb + (\omega_{2} - \omega_{1}) \tau_{k} \dd_{k} \AA \xx_{k-1} + \hf (\omega_{2}^2 - \omega_{1}^2) \tau_{k}^2 \dd_{k}^{\T} \AA \dd_{k} \\
			&< - (\omega_{2} - \omega_{1}) \tau_{k} \dd_{k}^{\T} \bb + (\omega_{2} - \omega_{1}) \tau_{k} \dd_{k} \AA \xx_{k-1} + (\omega_{2} - \omega_{1}) \tau_{k}^2 \dd_{k}^{\T} \AA \dd_{k} \\
			&= - (\omega_{2} - \omega_{1}) \tau_{k} \dd_{k}^{\T} \bb + (\omega_{2} - \omega_{1}) \tau_{k} \dd_{k} \AA \xx_{k} = - (\omega_{2} - \omega_{1}) \tau_{k} \dd_{k}^{\T} \rr_{k} < 0,
		\end{align*}    
		where the last inequality comes from \cref{lemma:tau_dTr}.
		
		\item we have
		\begin{align*}
			\vnorm{\xx_{k}}^2 - \vnorm{\xx_{k-1}}^2 &= \left( \xx_{k-1} + \tau_{k} \dd_{k} \right)^{\T} \left( \xx_{k-1} + \tau_{k} \dd_{k} \right) - \vnorm{\xx_{k-1}}^2 \\
			&= 2 \tau_{k} \dd_{k}^{\T} \xx_{k-1} + \tau_{k}^2 \dd_{k}^{\T} \dd_{k} = \tau_{k} \dd_{k}^{\T} \xx_{k-1} + \tau_{k} \dd_{k}^{\T} \xx_{k} > 0,
		\end{align*}
		where the last inequality comes from \cref{lemma:tau_dTx}.
		
		\item Let $ \omega_{1} < \omega_{2} $. We have
		\begin{align*}
			\vnorm{\xx_{k-1} + \omega_{2} \tau_{k} \dd_{k}}^2 - \vnorm{\xx_{k-1} + \omega_{1} \tau_{k} \dd_{k}}^2 &= 2 (\omega_{2} - \omega_{1}) \tau_{k} \dd_{k}^{\T} \xx_{k-1} + (\omega_{2}^2 - \omega_{1}^2) \tau_{k}^2 \vnorm{\dd_{k}}^2 > 0,
		\end{align*}
		where the last inequality follows from \cref{lemma:tau_dTx} and noting that $ \omega_{1} + \omega_{2} > 0 $. 
		
		\item By \cref{lemma:tau_dTb}, we simply get the result as 
		\begin{align*}
			\xx_{k}^{\T} \bb - \xx_{k-1}^{\T} \bb = \tau_{k} \dd_{k}^{\T} \bb > 0.
		\end{align*}
	\end{enumerate}
\end{proof}

\begin{remark}
	\label{rem:monoton02}
	The conclusions of \cref{thm:A_norm} hold only when $ k < g $. For the last iteration $ k=g $, we again need to consider two separate cases. 
	\begin{enumerate}[label = {\bfseries (\roman*)}]
		\item When $ \bb \in \range(\AA) $, we always have $ \gamma^{(2)}_{g} \neq 0 $ (see \cref{rem:gamma_2}). In this case, if the NPC condition \cref{eq:c_gama_nc} has not been detected for all $ g $ iterations, the conclusions of \cref{thm:A_norm} continue to hold for $ k = g $.
		\item When $ \bb \notin \range(\AA) $, as discussed in \cref{rem:monoton01}, we always have $ \gamma^{(2)}_{g} = 0 $, $ \tau_{g} = 0 $, and $ \xx_{g} = \xx_{g-1} $. Also, if the NPC condition \cref{eq:c_gama_nc} has not been detected for all $ g-1 $ iterations, it is guaranteed to be detected at the $ g\th $ iteration. This implies that at the very last iteration, we have 
		\begin{align*}
			m(\xx_{g}) &= m(\xx_{g-1}), \quad \vnorm{\xx_{g}} = \vnorm{\xx_{g-1}}, \quad \text{and} \quad \dotprod{\xx_{g}, \bb} = \dotprod{\xx_{g-1}, \bb}.
		\end{align*}
	\end{enumerate}
\end{remark}


\section{Numerical experiments}
\label{sec:exp}

We now give several numerical experiments to not only verify our main theoretical results (\cref{sec:exp:theory}), but also to showcase the advantages of using the NPC direction detected as part of the MINRES iterations for optimization algorithms (\cref{sec:exp:NewtonMR}). 

\subsection{NPC detection and monotonicity properties}
\label{sec:exp:theory}
In this section, we numerically verify the results from \cref{thm:determinant_T,thm:A_norm,thm:xTr_monotonicity}
To do this, we consider $ d = 20 $ and generate symmetric matrices $ \AA $, $ \BB $ and $ \CC $ as three random realizations of the Gaussian orthogonal ensemble. We then manually change the top $ 19 $ eigenvalues to be logarithmically spaced points in the interval $ [1, 10^3] $. The last eigenvalues of $ \AA $ and $ \BB $ are set to $0$ and $-1$, respectively, while the last two eigenvalues of $ \CC $ are chosen to be $ -1 $ and $ -10 $. As a result, the first $ 18 $ positive eigenvalues of $ \CC $ are the same as those of $ \AA $ and $ \BB $. We then consider solving the least-squared problem using \cref{alg:MINRES} with $ \bb = \one $, i.e., the vector of all ones, and the underlying matrices $ \AA $, $ \BB $ and $ \CC $. \cref{fig:minres} depicts several quantities of interest across the iterations of MINRES on these three problem instances.  

In \cref{fig:minres}, small special marks, i.e., ``$ \bm{\times} $'',``$ \bm{+} $'', and ``$\bm{\star}$'', represent MINRES iterates, which are then connected to one another in sequence using dashed lines. The special marks corresponding to those iterations where the NPC condition \cref{eq:c_gama_nc} is detected are enlarged to distinguish them from other iterates. 
Let us consider all the iterates before the NPC condition \cref{eq:c_gama_nc} is detected for the first time. For all these iterations, we see that the smallest eigenvalue of $ \TT_{k} $ remains strictly positive, i.e., $ \TT_{k} \succ \zero $, which verifies \cref{thm:determinant_T}. 
Throughout these iterations, the quantity $ \dotprod{\xx_{k}, \rr_{k}} $ remains positive, which validates the result of \cref{thm:xTr_monotonicity}.
Similarly, for all these iterates, as predicted by \cref{thm:A_norm}, the quadratic $ m(\xx_{k}) \triangleq \dotprod{\xx_{k}, \AA \xx_{k}}/2 - \dotprod{\bb,\xx_{k}} $ is monotonically decreasing while the quantities $ \vnorm{\xx_{k}} $ and $ \dotprod{\xx_{k},\bb} $ are monotonically increasing.  
In sharp contrast, as soon as the NPC condition \cref{eq:c_gama_nc} is detected for the first time, the above monotonicity properties are no longer guaranteed and can be violated as of that iteration. 
Note that, since the construction of the matrix $ \AA $ ensures that $ \AA \succeq \zero $ and $ \bb \notin \range(\AA) $, the residual of the corresponding system will not vanish, and as anticipated by \cref{lemma:Lanczos}, the NPC condition \cref{eq:c_gama_nc} is detected at the very last iteration.

\begin{figure}[!htbp]
	\centering
	\hspace*{-1.5cm}
	\includegraphics[width=1\textwidth]{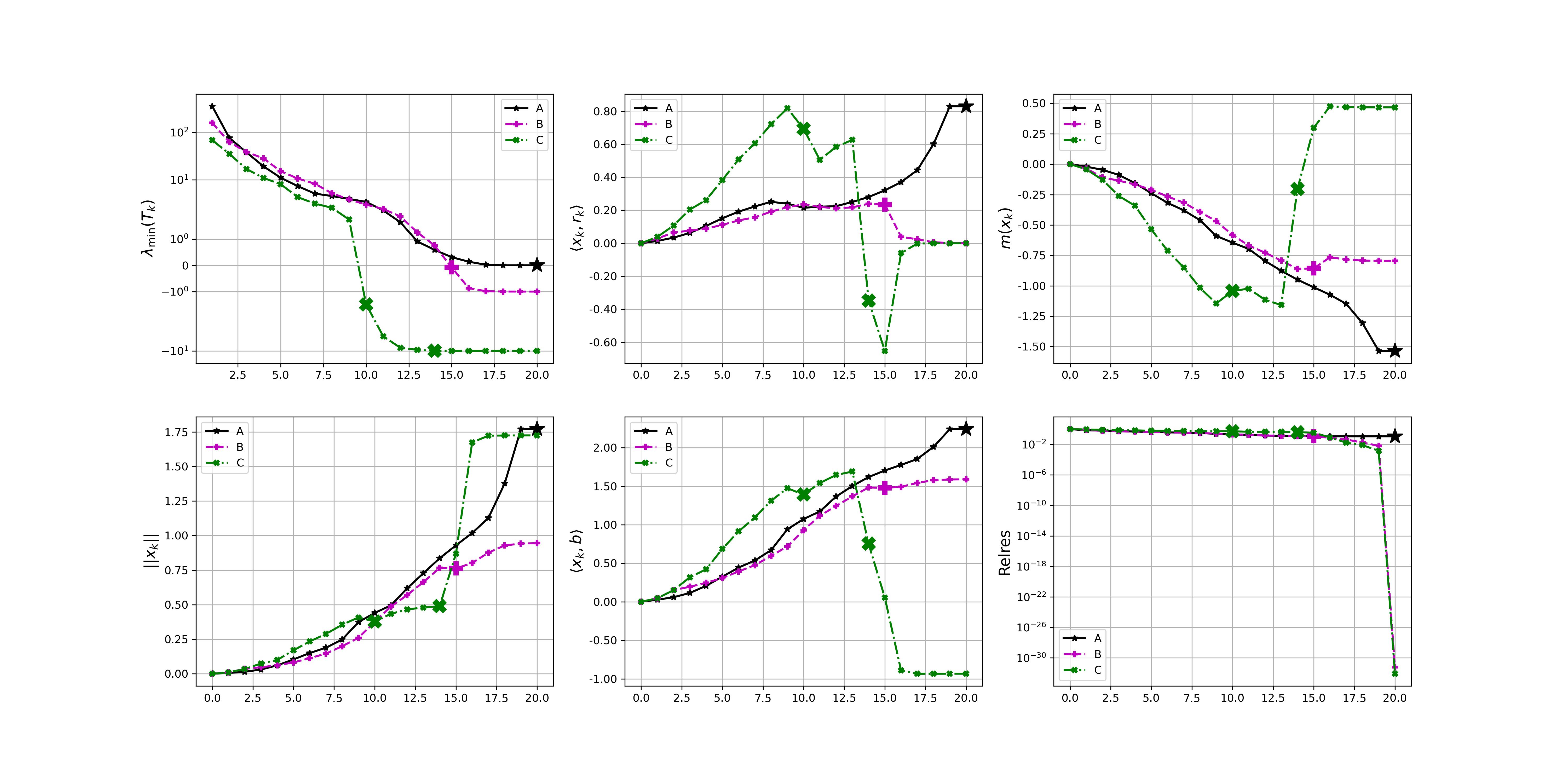}
	\caption{Several relevant quantities, namely $ \lambda_{\min}(\TT_k) $, $ \dotprod{\xx_{k}, \rr_{k}} $, $ m(\xx_{k}) \triangleq \dotprod{\xx_{k}, \AA \xx_{k}}/2 - \dotprod{\bb,\xx_{k}} $, $ \vnorm{\xxk} $, $ \dotprod{\xx_{k}, \bb} $,  and $ \vnorm{\rrk}/\vnorm{\bb} $, across iterations of MINRES using $ d = 20 $, $ \bb = \one $, and with matrices $ \AA $, $ \BB $ and $ \CC $ as constructed in \cref{sec:exp:theory}. In all the plots, the x-axis represents the iteration counter and large special marks on each plot highlight the iterations where the NPC condition \cref{eq:c_gama_nc} is detected. \label{fig:minres}}
\end{figure}

\subsection{NPC direction and optimization algorithms}
\label{sec:exp:NewtonMR}
We now move onto to present preliminary experiments to showcase advantages of using a NPC direction provided by MINRES for optimization algorithms. We do this by consider an unconstrained minimization problem involving a twice continuously differentiable function $ f:\real^{d} \to \real $. We focus on our prior work \cite{roosta2018newton,liu2021convergence}, henceforth referred to as \texttt{Newton-MR-grad}, as a variant of inexact Newton Method in which the least-squares subproblems are approximately solved using MINRES. By construction, \texttt{Newton-MR-grad} solves $\min_{\ww} \vnorm{\nabla f(\ww)}^{2} $ as a proxy for solving $\min_{\ww} f(\ww)$, and hence can be applied for unconstrained optimization of a class
of non-convex problems known as invex \cite{mishra2008invexity}. However, beyond invex problems, \texttt{Newton-MR-grad} does not provide any guarantees on the quality of the solutions and, therefore, it may terminate near a saddle point or even a local maximum. This can be remedied by employing a NPC direction arising as part of the MINRES iterations. 

For this, we consider a new variant, referred to as \texttt{Newton-MR}, which differs from \texttt{Newton-MR-grad} by two simple modifications. First, unlike \texttt{Newton-MR-grad}, the iterations of MINRES for \texttt{Newton-MR} are terminated as soon as a NPC condition is detected and the corresponding NPC direction is return to be used for line-search purposes. Second, the Armijo-type line search is performed to obtain a step-size to reduce $ f(\ww) $, as opposed to $ \| \nabla f(\ww) \|^2 $ for \texttt{Newton-MR-grad}. For both methods, the initial trial step-size for line-search is set to one. As a consequence of this construction, starting from the same initial point, $ \ww_{0} $, both algorithms take identical steps until either a NPC direction is detected within MINRES or the step-sizes returned as part of the corresponding line-search procedures are different.

Following \cite{xuNonconvexEmpirical2017}, we consider a binary classification problem using a (regularized) nonlinear least-square objective of the form
\begin{align}
	\label{eq:NLS}  
	f(\ww) = \frac{1}{n} \sum_{i=1}^{n}  \left(\frac{1}{1+e^{-\dotprod{\aa_i, \ww}}} - b_i \right)^2 + \psi(\ww),
\end{align}
where $ \psi(\ww) $ is some regularization function, and the dataset $ \{\aa_i, b_i\}_{i=1}^n $ consists of feature vectors $ \aa_i \in \mathbb{R}^d, \; i=1,\ldots,n $ and the corresponding labels $ b_i \in \{0, 1\}, \; i=1,\ldots,n $. We consider three different regularization strategies, namely, a typical convex $ \ell_{2} $ regularization $ \psi(\ww) = 0.5 \| \ww \|^2 $, a nonconvex regularization $ \psi(\ww) = 0.01 \sum_{i=1}^{d} w_i^2 / (1+w_i^2) $, and also the case where there is no regularization, $ \psi(\ww) = 0 $. We have used \texttt{CIFAR10} dataset \cite{krizhevsky2009learning}, which contains $60,000$ color images of size $32 \times 32 $ in $10$ classes. For our binary classification setting, we have relabeled the odd and even classes, respectively, as $ 0 $ and $1 $. 

We terminate the optimization algorithms once the Euclidean norm of the gradient of $ f $ reaches below $ 10^{-10} $. The MINRES iterations are terminated if a relative residual tolerance of $0.01$ is reached (\texttt{Newton-MR} and \texttt{Newton-MR-grad}) or if a NPC direction is detected (\texttt{Newton-MR}).  To measure performance, we use the same metric as that considered in \cite{roosta2018newton}. Specifically, we consider total number of oracle calls of the function, gradient and Hessian-vector product as a complexity measure. In this light, respectively, total number of oracle calls for every iteration of \texttt{Newton-MR-grad} and \texttt{Newton-MR} will be $ 2 N_s + 2 N_{l} + 2$ and  $ 2 N_s + N_{l} + 2$, where $N_s$ and $N_l$ denote the total number of iterations for MINRES and the line search.

\begin{figure}[!htbp]
	\centering
	\hspace*{-1cm}
	\subfigure[Convex regularization]
	{\includegraphics[scale=0.3]{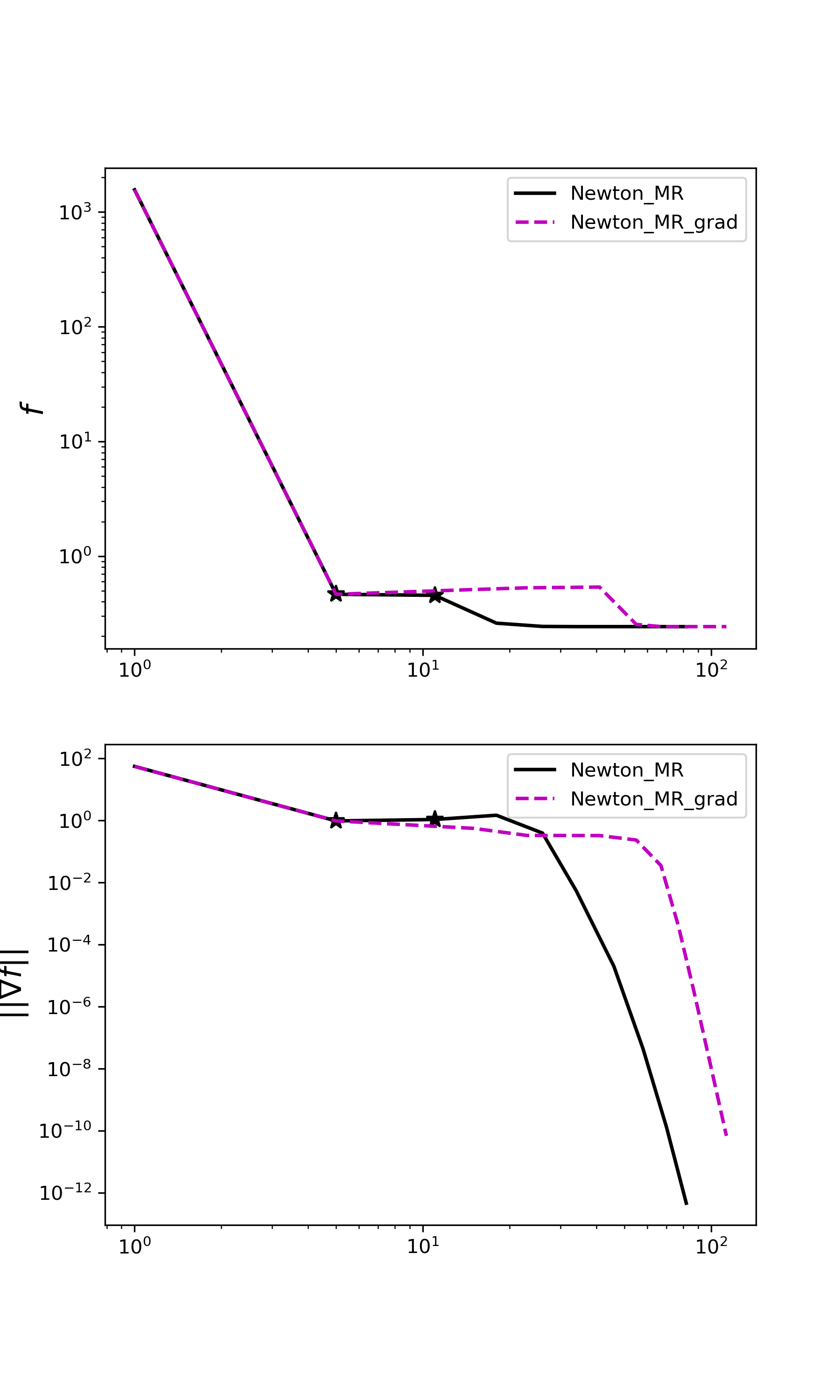}}
	\subfigure[No regularization]
	{\includegraphics[scale=0.3]{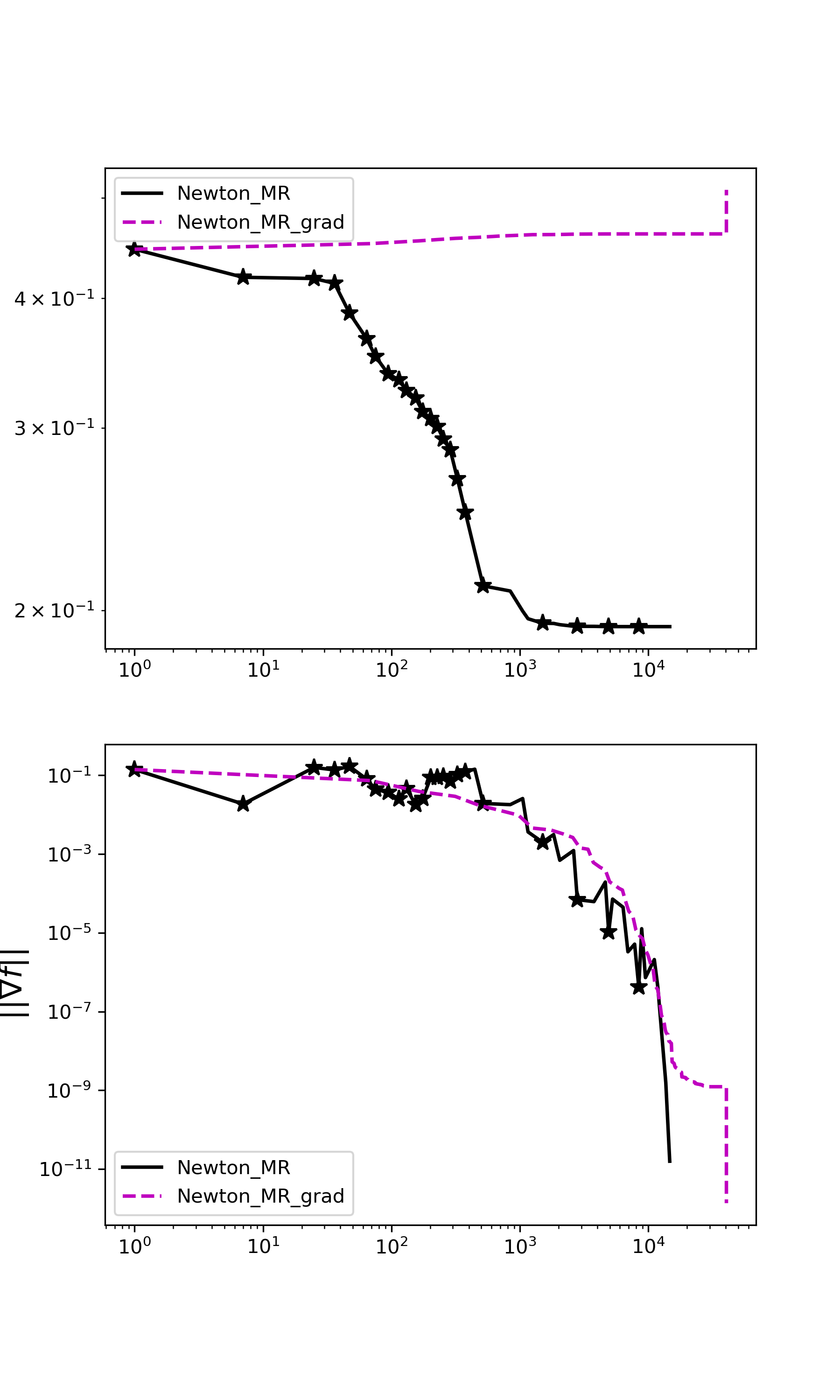}}
	\subfigure[Nonconvex regularization]
	{\includegraphics[scale=0.3]{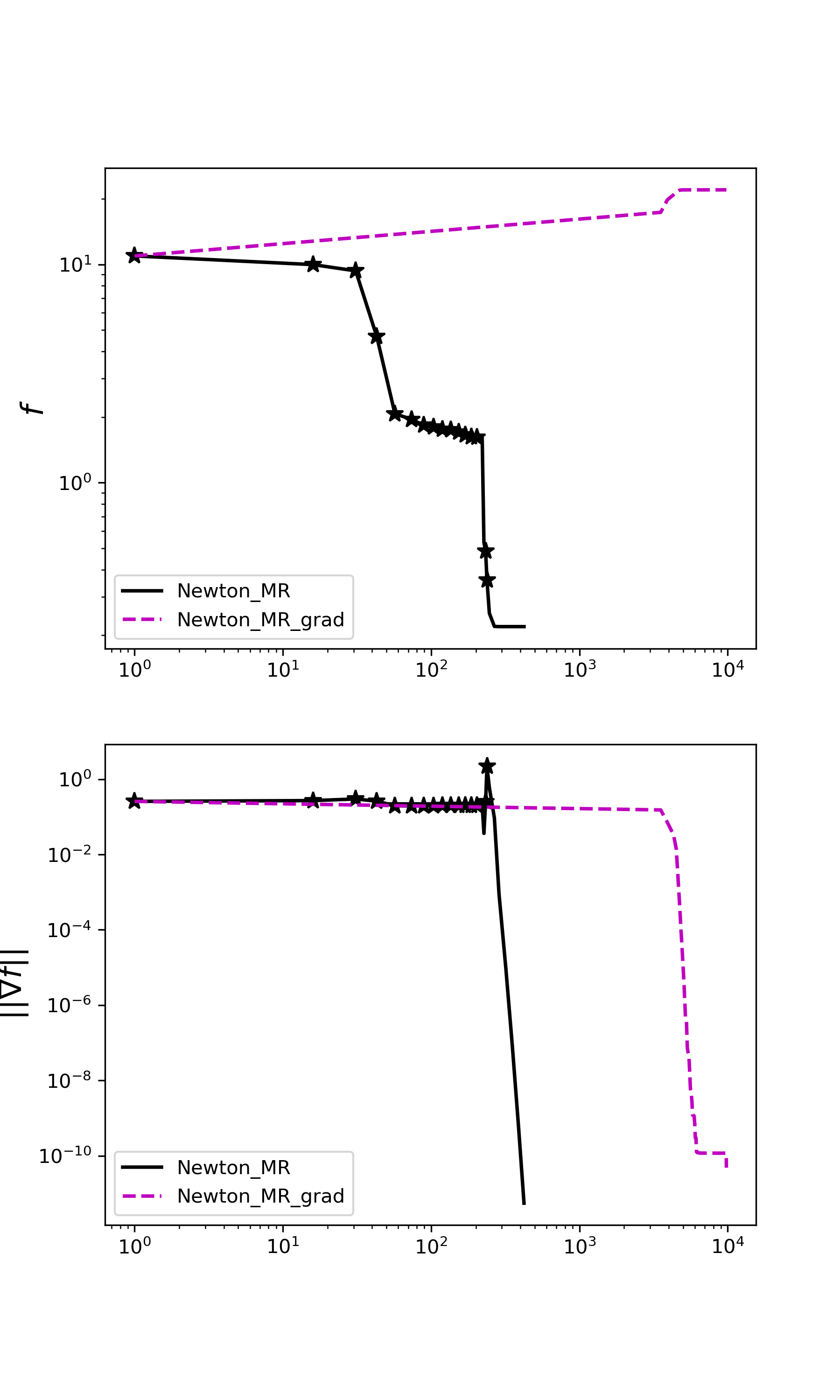}}
	\caption{The performance of \texttt{Newton-MR} v.s.\ \texttt{Newton-MR-grad} on the (regularized) non-linear least squares objective \cref{eq:NLS} using three different regularization strategies. The $x$-axis represents the total number of oracle calls. In the plots corresponding to \texttt{Newton-MR}, the special marks, denoted by ``$\bm{\star}$'', represent iterations where MINRES detects a NPC direction, which is then used as an update direction within the line-search procedure. \label{fig:NewtonMR}}
\end{figure}

\cref{fig:NewtonMR} shows the performance of \texttt{Newton-MR-grad} and \texttt{Newton-MR} across three regularization functions. In the plots corresponding to \texttt{Newton-MR}, the special marks, denoted by ``$\bm{\star}$'', represent iterations where MINRES detects a NPC direction, which is then used as an update direction within the line-search procedure for \texttt{Newton-MR}. When the regularization is convex, both methods perform similarly. Despite the fact that this problem is not invex, the presence of convex regularization provides a favorable optimization landscape for \texttt{Newton-MR-grad}, and as a result, both methods are able to find a reasonable solution with a comparative amount of work.  However, beyond the convex regularization, the benefits of using NPC direction become quite clear.  Indeed, the nonconvex regularization and removing regularization term altogether, both, lead to highly non-convex optimization landscapes for which \texttt{Newton-MR-grad} clearly converges to undesirable local maxima. However, \texttt{Newton-MR}, which can employ NPC directions detected as part of MINRES iterations, gives monotonic decrease in objective value and converges to satisfactory local minima. Establishing the theoretical guarantees of our prototype algorithm, \texttt{Newton-MR}, is the subject of a future work.

\section{Conclusions}
\label{sec:conclusion}
For solving linear least-squares problems involving a real symmetric but potentially indefinite and/or singular  matrix $ \AA $, we considered the celebrated minimal residual (MINRES) method of Paige and Saunders \cite{paige1975solution}. We showed that MINRES comes equipped with an inherent ability to detect directions of nonpositive curvature. Such a direction can be detected by monitoring a certain readily available condition within the MINRES iterations (NPC Condition \cref{eq:c_gama_nc}). We showed that whenever such a condition holds, the residual of the previous iteration is one such direction. As a consequence, MINRES has a built-in mechanism to provide a certificate for the positive semi-definiteness (or lack thereof) of $ \AA $. 
We then established several monotonicity properties, that mimic those of CG but are applicable for any real symmetric matrix. We also numerically verified our main results and showcased the advantages of using the NPC directions arising as part of MINRES iterations for optimization algorithms. As anticipated in \cite{dahito2019conjugate}, it is hoped that these properties will allow MINRES to be considered as a potentially superior alternative to CG for all Newton-type nonconvex optimization algorithms that employ CG as their subproblem solver. 

\appendix
\section{Technical Lemmas}
\label{sec:appendix}
To establish the main results of this paper, we need a few technical lemmas, which are all gathered here. 

The following \cref{lemma:basic,lemma:tvr,lemma:tau_d} are used in the proof of \cref{thm:determinants,thm:xTr_monotonicity}.

\begin{lemma}
	\label{lemma:basic}
	Let $ g $ be the grade of $ \bb $ with respect to $ \AA $ as in \cref{def:grade_b}. As long as the NPC condition \cref{eq:c_gama_nc} has not been detected for $ 1 \leq k \leq g $, we have
	\begin{subequations}
		\label{eq:basic}
		\begin{align}
			\alpha_{k} &> 0, \quad \beta_{k} > 0   \label{eq:alpha_beta} \\
			1 > s_{k} &\geq 0, \quad 1 \geq \abs{c_t} > 0,   \label{eq:s_c} \\
			(-1)^{k-i} c_{i} \gamma_{k}^{(1)} &> 0, \quad (-1)^{k-i} c_{i} \tau_{k} > 0, \quad (-1)^{k-i} c_{i} c_{k} > 0, \qquad 0 \leq i \leq k, \label{eq:c_gama_tau_c} \\
			\gamma_{k}^{(2)} &> 0, \quad \delta_{k}^{(2)} > 0, \quad \epsilon_{k} > 0, \label{eq:R_entries} \\
			& \hspace{-15mm} \Span \{ \rr_{0}, \ldots, \rr_{k-1}\} = \mathcal{K}_{k} (\AA, \bb).  \label{eq:krylov_rk}
		\end{align} 
	\end{subequations}
\end{lemma}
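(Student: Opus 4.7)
The plan is induction on $k$, with heavy reuse of \cref{thm:determinant_T}. For the base case $k=1$, I would read off $\gamma_1^{(1)} = \alpha_1$ from the initialization $c_0 = -1$, $s_0 = 0$, $\delta_1^{(1)} = 0$ together with the recurrence $\gamma_k^{(1)} = s_{k-1}\delta_k^{(1)} - c_{k-1}\alpha_k$. The NPC-not-detected hypothesis at $k=1$ becomes $c_0\gamma_1^{(1)} = -\alpha_1 < 0$, which gives $\alpha_1 > 0$, and all the remaining $k=1$ claims (signs of $c_1, \tau_1$, the bounds on $s_1, c_1$, positivity of $\gamma_1^{(2)}$, and the trivial span equality) follow at once.

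For the inductive step, the single most useful input is that \cref{thm:determinant_T} already guarantees $\TT_k \succ \zero$ whenever the NPC condition has not triggered through iteration $k$. Positive definiteness of $\TT_k$ forces all its diagonal entries to be positive, giving $\alpha_k > 0$ with no further work, while $\beta_k > 0$ for $k \leq g$ is immediate from the definition of the grade. The alternating sign pattern for the $c_i$ (which is what drives \cref{eq:c_gama_tau_c}) is then the core of the induction: the assumption $c_{k-1}\gamma_k^{(1)} < 0$ together with $c_k = \gamma_k^{(1)}/\gamma_k^{(2)}$ and $\gamma_k^{(2)} > 0$ forces $\sign(c_k) = -\sign(c_{k-1})$, and combining with the inductive hypothesis yields $(-1)^{k-i}c_i c_k > 0$ and its companions. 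The bounds $1 > s_k \geq 0$ and $1 \geq |c_k| > 0$ then drop out of $c_k^2 + s_k^2 = 1$, after observing that $c_k = 0$ would give $\gamma_k^{(1)} = 0$ and hence $c_{k-1}\gamma_k^{(1)} = 0$, violating the NPC assumption; positivity of $\gamma_k^{(2)} = \sqrt{(\gamma_k^{(1)})^2 + \beta_{k+1}^2}$ follows from the same observation.

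For the entries of $\RR_k$, I would substitute $\delta_k^{(1)} = -c_{k-2}\beta_k$ (the previous-step output of the QR recurrence) into
\[
\delta_k^{(2)} = c_{k-1}\delta_k^{(1)} + s_{k-1}\alpha_k = -c_{k-1}c_{k-2}\beta_k + s_{k-1}\alpha_k.
\]
By the alternating-sign property just established, $c_{k-1}c_{k-2} < 0$, so the first term is strictly positive; the second is nonnegative because $\alpha_k, \beta_k > 0$ and $s_{k-1} \geq 0$, which yields $\delta_k^{(2)} > 0$ in the range where it is nontrivial. Positivity of $\epsilon_k = s_{k-2}\beta_k$ follows similarly from the inductive $s_{k-2} > 0$ and $\beta_k > 0$. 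Finally, the span identity \cref{eq:krylov_rk} is handled by induction on the residual recurrence $\rr_k = s_k^2 \rr_{k-1} - \phi_k c_k \vv_{k+1}$ from \cref{eq:rk_rec}: since $\phi_k c_k \neq 0$ under the hypotheses, $\vv_{k+1}$ is recoverable from $\{\rr_{k-1}, \rr_k\}$, allowing one to pass between the bases $\{\vv_1, \dots, \vv_k\}$ and $\{\rr_0, \dots, \rr_{k-1}\}$ of $\mathcal{K}_k(\AA, \bb)$.

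The main obstacle will be the bookkeeping at the boundaries: the first few iterations where some $\delta_k^{(2)}$ or $\epsilon_k$ are trivially zero from initialization (so the positivity claims only have content for $k$ large enough), and the terminal iteration $k = g$ where one may have $\beta_{g+1} = 0$ and $s_g = 0$. I need to be careful that the induction does not tacitly use $\alpha_k > 0$ to conclude $\TT_k \succ \zero$ (that direction would be circular); instead the flow is strictly NPC-not-detected $\Rightarrow$ $\TT_k \succ \zero$ (via \cref{thm:determinant_T}) $\Rightarrow$ $\alpha_k > 0$ $\Rightarrow$ positivity of the QR quantities $\Rightarrow$ the sign bookkeeping for $c_k, \tau_k, \gamma_k^{(1)}$.
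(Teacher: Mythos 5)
Your proposal is correct and follows essentially the same route as the paper's proof: both hinge on \cref{thm:determinant_T} to deduce $\TT_k \succ \zero$ (hence $\alpha_k > 0$) from the NPC-not-detected hypothesis, derive the alternating-sign pattern of the $c_i$ from $c_{k-1}\gamma_k^{(1)} < 0$, $c_k = \gamma_k^{(1)}/\gamma_k^{(2)}$, and $c_0 = -1$, read off positivity of $\gamma_k^{(2)}, \delta_k^{(2)}, \epsilon_k$ from the QR recurrences plus that sign pattern, and recover \cref{eq:krylov_rk} by inverting the residual recurrence \cref{eq:rk_rec}. The paper presents the argument sequentially rather than as a formally labeled induction, but the logic bootstraps forward from $c_0 = -1$ in exactly the way your induction makes explicit, so the two are the same proof.
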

\begin{proof}
	We first note that if the NPC condition \cref{eq:c_gama_nc} does not hold for any $ k \leq g $, then we must have $ c_{k-1} \neq 0 $, $ \phi_{k-1} \neq 0 $, $ \gamma_{k}^{(1)} \neq 0 $, and consequently $ \gamma_{k}^{(2)} \neq 0 $ by \cref{eq:c_s_r2}. Also, from \cref{eq:rAr,thm:determinant_T}, as long as the NPC condition \cref{eq:c_gama_nc} has not been detected for $ k \leq g $, then we must have $ \rr_{k-1}^{\T} \AA \rr_{k-1} > 0 $ and $ \TT_{k} \succ \zero $.
	
	By construction in \cref{alg:MINRES}, we always have $ \beta_{k} > 0, \; k \leq g $, and from $ \TT_{k} \succ \zero $ it follows that $ \alpha_{k} > 0, \; k \leq g  $. This gives \cref{eq:alpha_beta}.
	
	From \cref{eq:c_s_r2} and the fact that $ c_{k-1} \neq 0 $, we get $ 0 \leq s_{k-1} < 1 $ and $ 0 < |c_{k-1}| \leq 1 $, which gives \cref{eq:s_c}.

	By \cref{eq:c_s_r2} and the construction of \cref{alg:MINRES}, we know that, if $ \gamma_{i}^{(1)} \neq 0 $, then $ c_{i}, \tau_{i}$ and $\gamma_{i}^{(1)} $ all have the same sign. Using the assumption that the NPC condition \cref{eq:c_gama_nc} does not hold, coupled with the fact that $ c_{0} = -1 $, we get 
	\begin{align*}
		\left\{
		\begin{array}{ll}
			c_{i} > 0, \tau_{i} > 0, \gamma_{i}^{(1)} > 0, \quad \text{$ i $ is odd}, \\
			c_{i} < 0, \tau_{i} < 0, \gamma_{i}^{(1)} < 0, \quad \text{$ i $ is even},
		\end{array}
		\right.
	\end{align*}
	which gives us \cref{eq:c_gama_tau_c}.      
	Again, from \cref{alg:MINRES,eq:c_gama_tau_c}, we have that 
	\begin{align*}
		\gamma_{k}^{(2)} = \sqrt{(\gamma_{k}^{(1)})^2 + \beta_{k+1}^2} &> 0, \quad k \geq 1, \\
		\delta_{k}^{(2)} = -c_{k-2} c_{k-1} \beta_{k} + s_{k-1} \alpha_{k} &> 0, \quad k \geq 2, \\
		\epsilon_{k} = s_{k-2} \beta_{k} &> 0, \quad k \geq 3,
	\end{align*}    
	which gives \cref{eq:R_entries}. 
	
	To prove \cref{eq:krylov_rk}, we need to consider \cref{eq:rk_rec}. If $ \phi_{k-1} \neq 0 $ and  $ c_{k} \neq 0 $, it follows that $ \vv_{k} \in \Span\{\rr_{k-1},\rr_{k-2}\} $, which in turn implies 
	\begin{align*}
		\Span \{ \rr_{0}, \ldots, \rr_{k-1}\} = \Span \{ \vv_{1}, \ldots, \vv_{k}\} = \mathcal{K}_{k} (\AA, \bb).
	\end{align*}
\end{proof}

\begin{remark}
	\label{rem:gamma_2}
	Since we always have $ \beta_{g+1} = 0 $, from the construction of \cref{alg:MINRES}, it follows that $ \gamma_{g}^{(2)} \neq 0 $ is equivalent to having $ \rr_{g} = \zero $, i.e., $ \bb \in \range(\AA) $. In this light, in what follows, the condition $ \gamma_{g}^{(2)} \neq 0  $ can be thought of as implying $ \bb \in \range(\AA) $.
\end{remark}

\begin{lemma}
	\label{lemma:tvr}   
	Let $ g $ be the grade of $ \bb $ with respect to $ \AA $  as in \cref{def:grade_b}. As long as the NPC condition \cref{eq:c_gama_nc} has not been detected for $ 1 \leq k < g $, we have
	\begin{align*}
		(-1)^{i} \tau_{k} \vv_{k-i}^{\T} \rr_{k-j} > 0, \quad 0 \leq i < k, \quad 0 \leq j \leq i + 1. 
	\end{align*}
	Furthermore, if $ \gamma^{(2)}_{g} \neq 0 $ and the NPC condition \cref{eq:c_gama_nc} has not been detected for all $ g $ iterations, the conclusion continues to hold for $ k = g $ and $ 1 \leq j \leq i+1 $.
\end{lemma}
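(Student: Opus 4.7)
The plan is to compute $\vv_{k-i}^{\T} \rr_{k-j}$ in closed form by unrolling the MINRES residual recurrence \cref{eq:rk_rec}, namely $\rr_m = s_m^{2} \rr_{m-1} - \phi_m c_m \vv_{m+1}$, and then to read off the sign of the product $(-1)^{i} \tau_k \vv_{k-i}^{\T} \rr_{k-j}$ from the sign inventory supplied by \cref{lemma:basic}. So at a high level, the proof reduces to (a) an explicit identity for $\vv_n^{\T}\rr_\ell$, and (b) a short parity count.

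The key auxiliary identity I would establish is
\begin{align*}
\vv_{n}^{\T} \rr_{\ell} = -\left(\prod_{p=n}^{\ell} s_{p}^{2}\right) \phi_{n-1} c_{n-1}, \qquad 1 \leq n \leq \ell + 1 \leq g,
\end{align*}
with the empty product (arising in the case $n = \ell + 1$) interpreted as $1$. This is shown by induction on $\ell - n$: the base case $n = \ell + 1$ follows by taking the inner product of \cref{eq:rk_rec} with $\vv_{\ell+1}$ and using $\rr_{\ell-1} \in \mathcal{K}_{\ell}(\AA,\bb) \perp \vv_{\ell+1}$, while the inductive step $n \leq \ell$ uses $\vv_n \perp \vv_{\ell+1}$ to collapse \cref{eq:rk_rec} into $\vv_n^{\T}\rr_\ell = s_\ell^{2}\vv_n^{\T}\rr_{\ell-1}$.

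Specializing to $n = k - i$ and $\ell = k - j$ (which is valid exactly because the hypothesis $0 \leq j \leq i+1$ gives $n \leq \ell + 1$), the identity and \cref{eq:s_c,eq:c_gama_tau_c} yield
\begin{align*}
\mathrm{sgn}\bigl(\vv_{k-i}^{\T}\rr_{k-j}\bigr) \;=\; -\mathrm{sgn}(c_{k-i-1}) \;=\; (-1)^{k-i+1},
\end{align*}
since $s_p > 0$, $\phi_{n-1} > 0$, and $\mathrm{sgn}(c_m) = (-1)^{m+1}$ (which follows from $c_0 = -1$ together with \cref{eq:c_gama_tau_c}). Combining this with $\mathrm{sgn}(\tau_k) = (-1)^{k+1}$ (\cref{eq:c_gama_tau_c} at $i = 0$) gives
\begin{align*}
\mathrm{sgn}\bigl((-1)^{i}\tau_k \vv_{k-i}^{\T}\rr_{k-j}\bigr) \;=\; (-1)^{i + (k+1) + (k-i+1)} \;=\; (-1)^{2k+2} \;=\; +1,
\end{align*}
which is the claimed strict positivity.

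For the last-iteration extension with $k = g$ and $\gamma_g^{(2)} \neq 0$, \cref{rem:gamma_2} gives $\bb \in \range(\AA)$ and hence $\rr_g = \zero$, which is exactly why $j = 0$ must be excluded in that regime. For $1 \leq j \leq i+1$ however $k - j \leq g - 1$, so $\rr_{k-j}$ is a standard MINRES residual and the closed-form identity above still applies, while $\tau_g = c_g \phi_{g-1}$ is nonzero and obeys the same parity rule thanks to $\gamma_g^{(2)} \neq 0$. The main obstacle is really just careful bookkeeping: one has to merge the two subcases ($j = i+1$ versus $j \leq i$) of the unrolled recurrence into a single formula, and then tally the $\pm 1$ factors without off-by-one slips. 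Once the closed form for $\vv_n^{\T}\rr_\ell$ is in hand, the rest of the proof is a one-line sign count.
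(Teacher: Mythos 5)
Your proof is correct and takes essentially the same route as the paper's: both unroll the residual recurrence \cref{eq:rk_rec} and extract the sign from \cref{lemma:basic}. The only difference is organizational—you package the unrolling into a single closed-form identity $\vv_n^{\T}\rr_\ell = -\bigl(\prod_{p=n}^{\ell} s_p^2\bigr)\phi_{n-1}c_{n-1}$ proved by induction on $\ell - n$, replacing the paper's four-way case split ($j = i+1$ vs.\ $j < i+1$, $i+1 < k$ vs.\ $i+1 = k$) with one uniform parity count.
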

\begin{proof}
	Since $ 0 \leq i < k < g $, we have $\phi_{k-i-1} > 0$, which from $ \phi_{k-i-1} = s_{k-i-1} s_{k-i-2} \ldots s_{1} \neq 0 $ and \cref{eq:s_c} implies $ 0 < s_{\ell} < 1 $ for $ 1 \leq \ell \leq k-i-1 $. Also from \cref{eq:krylov_rk}, we have $ \Span \{ \rr_{0}, \ldots, \rr_{\ell-1}\} = \mathcal{K}_{\ell} (\AA, \bb) = \Span \{ \vv_{1}, \ldots, \vv_{\ell}\} $. First, consider the case where $ j = i+1 < k $. From \cref{alg:MINRES} and \cref{eq:c_gama_tau_c}, we have
	\begin{align*}
		(-1)^{i} \tau_{k} \vv_{k-i}^{\T} \rr_{k-i-1} = (-1)^{i} \tau_{k} \vv_{k-i}^{\T} \left( s_{k-i-1}^2 \rr_{k-i-2} - \phi_{k-i-1} c_{k-i-1} \vv_{k-i} \right) =  (-1)^{i+1} \tau_{k} c_{k-i-1} \phi_{k-i-1} > 0.
	\end{align*}
	Now, suppose $ j < i+1 < k $. Again, from \cref{alg:MINRES} and \cref{eq:c_gama_tau_c}, it follows that
	\begin{align*}
		(-1)^{i} \tau_{k} \vv_{k-i}^{\T} \rr_{k-j}  &= (-1)^{i} \tau_{k} \vv_{k-i}^{\T} \left( s_{k-j}^2 \rr_{k-j-1} - \phi_{k-j} c_{k-j} \vv_{k-j+1} \right) = (-1)^{i} \tau_{k} s_{k-j}^2 \vv_{k-i}^{\T} \rr_{k-j-1}\\
		& \quad \vdots \\
		&= (-1)^{i} \tau_{k} s_{k-j}^2 s_{k-j-1}^2 \ldots s_{k-i}^2 \vv_{k-i}^{\T} \rr_{k-i-1}\\
		&= (-1)^{i} \tau_{k} s_{k-j}^2 s_{k-j-1}^2 \ldots s_{k-i}^2 \vv_{k-i}^{\T} \left( s_{k-i-1}^2 \rr_{k-i-2} - \phi_{k-i-1} c_{k-i-1} \vv_{k-i} \right) \\
		&= (-1)^{i+1} \tau_{k} c_{k-i-1} \phi_{k-i-1}  \prod_{\ell=j}^{i} s_{k-\ell}^2 > 0.
	\end{align*}
	Now, let's suppose $ j = i + 1 = k $. Similarly to above, we have
	\begin{align*}
		(-1)^{k-1} \tau_{k} \vv_{1}^{\T} \rr_{0}  = (-1)^{k-1} \tau_{k} \vnorm{\bb} = (-1)^{k+1} \tau_{k} \vnorm{\bb} = (-1)^{k} c_{0} \tau_{k} \vnorm{\bb} > 0,
	\end{align*}
	where we have again used \cref{eq:c_gama_tau_c} and the fact that $ c_{0} = -1 $. Similarly, for $ j < i + 1 = k $, we have
	\begin{align*}
		(-1)^{k-1} \tau_{k} \vv_{1}^{\T} \rr_{k-j}  &= (-1)^{k-1} \tau_{k} s_{k-j}^2 s_{k-j-1}^2 \ldots s_{1}^2 \vv_{1}^{\T} \rr_{0} = (-1)^{k-1} \tau_{k} s_{k-j}^2 s_{k-j-1}^2 \ldots s_{1}^2 \vnorm{\bb} > 0.
	\end{align*}
\end{proof}

\begin{lemma}
	\label{lemma:tau_d}
	Let $ g $ be the grade of $ \bb $ with respect to $ \AA $  as in \cref{def:grade_b}. We have 
	\begin{align*}
		\dd_{k} = \sum_{l=0}^{k-1} \frac{(-1)^{l} q_{(k,l)} }{\prod_{\ell=0}^{l} \gamma_{k-l+\ell}^{(2)}} \vv_{k-l}, \quad 1 \leq k < g,
	\end{align*}
	where $q_{(k,l)} $ is defined in \cref{eq:det_T_R:S} and $q_{(k,0)} = 1 $.
	Furthermore, if $ \gamma^{(2)}_{g} \neq 0 $, the conclusion continues to hold for $ k = g $.
\end{lemma}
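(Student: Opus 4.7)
The plan is to establish the formula by strong induction on $k$, driven by two parallel three-term recursions: the MINRES update rule
\[
\dd_{k} \;=\; \frac{1}{\gamma_{k}^{(2)}}\!\left(\vv_{k} - \delta_{k}^{(2)} \dd_{k-1} - \epsilon_{k} \dd_{k-2}\right),
\]
derived from $\VV_{k} = \DD_{k} \RR_{k}$, and a cross-$k$ recursion
\[
q_{(k,j)} \;=\; \delta_{k}^{(2)}\, q_{(k-1,\,j-1)} \;-\; \gamma_{k-1}^{(2)}\epsilon_{k}\, q_{(k-2,\,j-2)},
\]
obtained by Laplace expansion of the determinant in \cref{eq:det_T_R:S} along its last row.

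For the base cases, I would directly check $k = 1$ (where $\dd_{1} = \vv_{1}/\gamma_{1}^{(2)}$ matches the single summand with $q_{(1,0)} = 1$) and $k = 2$ (where $q_{(2,1)} = \delta_{2}^{(2)}$ gives the correct coefficient of $\vv_{1}$). For the inductive step, assuming the claim for $k-1$ and $k-2$, I would substitute the inductive expressions into the MINRES recursion and read off the coefficient of each $\vv_{k-j}$. The coefficient of $\vv_{k}$ is simply $1/\gamma_{k}^{(2)} = \mu_{0}^{(k)}$; for $j \geq 1$, combining the contributions from $\dd_{k-1}$ and $\dd_{k-2}$ and extracting the common denominator $\prod_{\ell=0}^{j}\gamma_{k-j+\ell}^{(2)}$ yields
\[
\frac{(-1)^{j}\bigl[\delta_{k}^{(2)} q_{(k-1,\,j-1)} - \gamma_{k-1}^{(2)} \epsilon_{k}\, q_{(k-2,\,j-2)}\bigr]}{\prod_{\ell=0}^{j}\gamma_{k-j+\ell}^{(2)}},
\]
which matches the claimed $\mu_{j}^{(k)}$ provided the numerator equals $q_{(k,j)}$.

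The main obstacle—and the only substantive step—is justifying this determinant identity. This is handled by expanding $q_{(k,j)}$ along the last row, whose only nonzero entries are $\gamma_{k-1}^{(2)}$ (in column $j-1$) and $\delta_{k}^{(2)}$ (in column $j$). The cofactor of $\delta_{k}^{(2)}$ is the leading $(j-1)\times(j-1)$ principal minor, which by inspection is $q_{(k-1,\,j-1)}$. The cofactor of $\gamma_{k-1}^{(2)}$ carries sign $(-1)^{2j-1} = -1$, and after deleting row $j$ and column $j-1$ the resulting matrix has a last column supported only at $\epsilon_{k}$; a further expansion along that column reduces the minor to $\epsilon_{k}\,q_{(k-2,\,j-2)}$. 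The boundary values $q_{(k,-1)} = 0$ and $q_{(k,0)} = 1$ then make the cases $j = 1$ and $j = 2$ work cleanly. Beyond this, the real care is purely bookkeeping: one must verify that the index shifts in $\prod_{\ell=0}^{l}\gamma_{k-l+\ell}^{(2)}$ align correctly when $k$ is replaced by $k-1$ or $k-2$, so that a common denominator is available before matching coefficients.

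Finally, the extension to $k = g$ under the assumption $\gamma_{g}^{(2)} \neq 0$ is immediate: the MINRES update $\dd_{g} = (\vv_{g} - \delta_{g}^{(2)} \dd_{g-1} - \epsilon_{g} \dd_{g-2})/\gamma_{g}^{(2)}$ is well-defined, and the Laplace expansion argument above is purely combinatorial and does not use $k < g$ anywhere, so the same induction step carries through at $k = g$.
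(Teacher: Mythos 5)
Your proof is correct, but it takes a genuinely different route from the paper's. The paper identifies the coefficients via the \emph{within-$k$} determinant recursion
$q_{(k,l)} = \delta_{k-l+1}^{(2)}q_{(k,l-1)} - \gamma_{k-l+1}^{(2)} \epsilon_{k-l+2} q_{(k,l-2)}$
(Laplace expansion from the top-left corner, holding $k$ fixed and recursing over $l$), and establishes the formula by iteratively unrolling the MINRES recursion $\dd_{k} = (\vv_{k} - \delta_{k}^{(2)}\dd_{k-1} - \epsilon_{k}\dd_{k-2})/\gamma_{k}^{(2)}$ until only Lanczos vectors remain, identifying the growing scalar coefficients with $q_{(k,l)}$ at each stage. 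You instead derive a \emph{cross-$k$} recursion $q_{(k,j)} = \delta_{k}^{(2)}q_{(k-1,j-1)} - \gamma_{k-1}^{(2)}\epsilon_{k}q_{(k-2,j-2)}$ by expanding along the \emph{last} row and then along the last column of the resulting minor, and run a clean strong induction on $k$. Both Laplace expansions are valid; your choice has the advantage that the determinant recursion exactly mirrors the two-step structure of the $\dd_{k}$ recurrence, so the inductive step reduces to a one-line coefficient match (after factoring out $\gamma_{k}^{(2)}$ and $\gamma_{k-1}^{(2)}$ from the respective denominator products), whereas the paper's route requires unrolling the full telescoping expansion. The index bookkeeping you flag is exactly the right thing to watch, and your boundary handling ($q_{(\cdot,-1)}=0$, $q_{(\cdot,0)}=1$) and the remark on $k=g$ under $\gamma_{g}^{(2)}\neq 0$ are both sound.
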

\begin{proof}
	The proof follows by expanding the definition of $ \dd_{k} $ from the construction of \cref{alg:MINRES} as well as using using the three-term recurrence relation \cref{eq:det_iters}.
	Indeed, we have
	\begin{align*}
		\dd_{k} &= \frac{1}{\gamma_{k}^{(2)}} \left( \vv_{k} - \delta_{k}^{(2)} \dd_{k-1} - \epsilon_{k} \dd_{k-2}\right) \\
		&= \frac{1}{\gamma_{k}^{(2)}}  \vv_{k} - \frac{\delta_{k}^{(2)}}{\gamma_{k-1}^{(2)} \gamma_{k}^{(2)}}   \left(\vv_{k-1} - \delta_{k-1}^{(2)} \dd_{k-2} - \epsilon_{k-1} \dd_{k-3}\right) - \frac{\epsilon_{k}}{\gamma_{k}^{(2)}}  \dd_{k-2} \\
		&= \frac{1}{\gamma_{k}^{(2)}}  \vv_{k} - \frac{\delta_{k}^{(2)}}{\gamma_{k-1}^{(2)} \gamma_{k}^{(2)}}   \vv_{k-1} + \frac{\delta_{k-1}^{(2)} \delta_{k}^{(2)} - \gamma_{k-1}^{(2)} \epsilon_{k}}{\gamma_{k-1}^{(2)} \gamma_{k}^{(2)}}  \dd_{k-2} + \frac{\epsilon_{k-1} \delta_{k}^{(2)}}{\gamma_{k-1}^{(2)} \gamma_{k}^{(2)}}  \dd_{k-3} \\
		&= \frac{1}{\gamma_{k}^{(2)}}  \vv_{k} - \frac{\delta_{k}^{(2)}}{\gamma_{k-1}^{(2)} \gamma_{k}^{(2)}}   \vv_{k-1}  + \frac{\delta_{k-1}^{(2)} \delta_{k}^{(2)} - \gamma_{k-1}^{(2)} \epsilon_{k}}{\gamma_{k-2}^{(2)} \gamma_{k-1}^{(2)} \gamma_{k}^{(2)}}  \left(\vv_{k-2} - \delta_{k-2}^{(2)} \dd_{k-3} - \epsilon_{k-2} \dd_{k-4}\right) + \frac{\epsilon_{k-1} \delta_{k}^{(2)}}{\gamma_{k-1}^{(2)} \gamma_{k}^{(2)}}  \dd_{k-3} \\
		&= \frac{1}{\gamma_{k}^{(2)}}  \vv_{k} - \frac{\delta_{k}^{(2)}}{\gamma_{k-1}^{(2)} \gamma_{k}^{(2)}}   \vv_{k-1} + \frac{\delta_{k-1}^{(2)} \delta_{k}^{(2)} - \gamma_{k-1}^{(2)} \epsilon_{k}}{\gamma_{k-2}^{(2)} \gamma_{k-1}^{(2)} \gamma_{k}^{(2)}}  \vv_{k-2} \\
		&\quad - \frac{\delta_{k-2}^{(2)}\left(\delta_{k-1}^{(2)} \delta_{k}^{(2)} - \gamma_{k-1}^{(2)} \epsilon_{k}\right) - \gamma_{k-2}^{(2)} \epsilon_{k-1} \delta_{k}^{(2)}}{\gamma_{k-2}^{(2)} \gamma_{k-1}^{(2)} \gamma_{k}^{(2)}}  \dd_{k-3} - \frac{\epsilon_{k-2}\left(\delta_{k-1}^{(2)} \delta_{k}^{(2)} - \gamma_{k-1}^{(2)} \epsilon_{k}\right)}{\gamma_{k-2}^{(2)} \gamma_{k-1}^{(2)} \gamma_{k}^{(2)}}  \dd_{k-4} \\
		&= \frac{q_{(k,0)}}{\gamma_{k}^{(2)}}  \vv_{k} - \frac{q_{(k,1)}}{\gamma_{k-1}^{(2)} \gamma_{k}^{(2)}}   \vv_{k-1} + \frac{q_{(k,2)}}{\gamma_{k-2}^{(2)} \gamma_{k-1}^{(2)} \gamma_{k}^{(2)}}  \vv_{k-2}  - \frac{\delta_{k-2}^{(2)}q_{(k,2)} - \gamma_{k-2}^{(2)} \epsilon_{k-1} q_{(k,1)}}{\gamma_{k-2}^{(2)} \gamma_{k-1}^{(2)} \gamma_{k}^{(2)}}  \dd_{k-3} \\
		& \quad - \frac{\epsilon_{k-2} q_{(k,2)}}{\gamma_{k-2}^{(2)} \gamma_{k-1}^{(2)} \gamma_{k}^{(2)}}  \dd_{k-4} \\
		&= \frac{q_{(k,0)}}{\gamma_{k}^{(2)}}  \vv_{k} - \frac{q_{(k,1)}}{\gamma_{k-1}^{(2)} \gamma_{k}^{(2)}}   \vv_{k-1} + \frac{q_{(k,2)}}{\gamma_{k-2}^{(2)} \gamma_{k-1}^{(2)} \gamma_{k}^{(2)}}  \vv_{k-2} - \frac{q_{(k,3)}}{\gamma_{k-2}^{(2)} \gamma_{k-1}^{(2)} \gamma_{k}^{(2)}}  \dd_{k-3} - \frac{\epsilon_{k-2} q_{(k,2)}}{\gamma_{k-2}^{(2)} \gamma_{k-1}^{(2)} \gamma_{k}^{(2)}}  \dd_{k-4}. 
	\end{align*}
	Continuing to expand in this way, we obtain
	\begin{align*}
		\dd_{k} &= \frac{q_{(k,0)}}{\gamma_{k}^{(2)}}  \vv_{k} - \frac{q_{(k,1)}}{\gamma_{k-1}^{(2)} \gamma_{k}^{(2)}}   \vv_{k-1} + \frac{q_{(k,2)}}{\gamma_{k-2}^{(2)} \gamma_{k-1}^{(2)} \gamma_{k}^{(2)}}  \vv_{k-2} - \frac{q_{(k,3)}}{\gamma_{k-3}^{(2)} \gamma_{k-2}^{(2)} \gamma_{k-1}^{(2)} \gamma_{k}^{(2)}}  \vv_{k-3} + \ldots \\
		&\quad +\frac{\delta_{k-l}^{(2)} q_{(k,l)} - \gamma_{k-l-1}^{(2)} \epsilon_{k-l} q_{(k,l-1)}}{\gamma_{k-l}^{(2)} \ldots \gamma_{k-1}^{(2)} \gamma_{k}^{(2)}} (-1)^{l+1}  \dd_{k-l-1} + \frac{\epsilon_{k-l} q_{(k,l)}}{\gamma_{k-l}^{(2)} \ldots \gamma_{k-1}^{(2)} \gamma_{k}^{(2)}} (-1)^{l+2}  \dd_{k-l-2} \\
		&\quad \vdots \\
		&= \sum_{l=0}^{k-1} \frac{(-1)^{l} q_{(k,l)}}{\prod_{\ell=0}^{l} \gamma_{k-l+\ell}^{(2)}}  \vv_{k-l}.
	\end{align*}
\end{proof}

\begin{proof}[Proof of \cref{thm:determinants}]
	Recall the three-term recurrence relation \cref{eq:det_iters} and the ansatz \cref{eq:det_R_formula}. We proceed to prove \cref{eq:det_R_formula} by induction. 
	By \cref{alg:MINRES}, it is easy to verify that $ q_{(k,1)} $ satisfies \cref{eq:det_R_formula}. Indeed, we have 
	\begin{align*}
		q_{(k,1)} &= \delta_{k}^{(2)} = c_{k-1} \delta_{k}^{(1)} + s_{k-1} \alpha_{k} = -c_{k-1} c_{k-2} \beta_{k} + s_{k-1} \alpha_{k} \\
		&= -\frac{\gamma_{k-1}^{(1)}}{\gamma_{k-1}^{(2)}} c_{k-2} \beta_{k} + \frac{\beta_{k}}{\gamma_{k-1}^{(2)}} \alpha_{k} = \left( \frac{p_{(k,1)}}{\gamma_{k-1}^{(2)}} - c_{k-2} \frac{\gamma_{k-1}^{(1)} p_{(k,0)}}{\gamma_{k-1}^{(2)}} \right) \beta_{k}.
	\end{align*}
	By constructions in \cref{alg:MINRES} as well as \cref{eq:c_s_r2}, we have
	\begin{align*}
		\delta_{k}^{(2)} \delta_{k-1}^{(2)} &= (c_{k-1} \delta_{k}^{(1)} + s_{k-1} \alpha_{k}) (c_{k-2} \delta_{k-1}^{(1)} + s_{k-2} \alpha_{k-1}) \\
		&= (-c_{k-1} c_{k-2} \beta_{k} + s_{k-1} \alpha_{k}) (-c_{k-2} c_{k-3} \beta_{k-1} + s_{k-2} \alpha_{k-1}) \\
		&= \left( \frac{-c_{k-2} \gamma_{k-1}^{(1)} + \alpha_{k} }{\gamma_{k-1}^{(2)}} \right) \left(\frac{-c_{k-3} \gamma_{k-2}^{(1)} + \alpha_{k-1}}{\gamma_{k-2}^{(2)}}\right) \beta_{k} \beta_{k-1}, 
	\end{align*}
	and
	\begin{align*}
		\gamma_{k-1}^{(2)} \epsilon_{k} = \left(\frac{(\gamma_{k-1}^{(1)})^2 + \beta_{k}^2}{\gamma_{k-1}^{(2)}}\right) \left(\frac{\beta_{k-1}}{\gamma_{k-2}^{(2)}}\right) \beta_{k}. 
	\end{align*}
	Putting this all together, it follows that
	\begin{align*}
		q_{(k,2)} &= \delta_{k}^{(2)} \delta_{k-1}^{(2)} - \gamma_{k-1}^{(2)} \epsilon_{k} \\
		&= \frac{\beta_{k-1} \beta_{k}}{\gamma_{k-2}^{(2)}\gamma_{k-1}^{(2)}} \left( (-c_{k-2} \gamma_{k-1}^{(1)} + \alpha_{k})(-c_{k-3} \gamma_{k-2}^{(1)} + \alpha_{k-1}) - (\gamma_{k-1}^{(1)})^2 - \beta_{k}^2 \right) \\
		&= \frac{\beta_{k-1} \beta_{k}}{\gamma_{k-2}^{(2)}\gamma_{k-1}^{(2)}} \left(\alpha_{k-1} \alpha_{k} - \beta_{k}^{2} - c_{k-3} \gamma_{k-2}^{(1)} \alpha_{k} + \gamma_{k-1}^{(1)} (c_{k-3} c_{k-2} \gamma_{k-2}^{(1)} - c_{k-2} \alpha_{k-1} - \gamma_{k-1}^{(1)}) \right) \\
		&= \frac{\beta_{k-1} \beta_{k}}{\gamma_{k-2}^{(2)}\gamma_{k-1}^{(2)}} \left(\alpha_{k-1} \alpha_{k} - \beta_{k}^{2} - c_{k-3} \gamma_{k-2}^{(1)} \alpha_{k} + c_{k-3} \gamma_{k-1}^{(1)} \gamma_{k-2}^{(2)} \right),
	\end{align*}
	where the last equality follows since 
	\begin{align*}
		\gamma_{k-1}^{(1)} \left( c_{k-3} c_{k-2} \gamma_{k-2}^{(1)} - c_{k-2} \alpha_{k-1} - \gamma_{k-1}^{(1)} \right) = \gamma_{k-1}^{(1)} \left( c_{k-3} c_{k-2} \gamma_{k-2}^{(1)} + c_{k-3} s_{k-2} \beta_{k-1} \right) = c_{k-3} \gamma_{k-1}^{(1)} \gamma_{k-2}^{(2)}.
	\end{align*}
	Hence, it follows that $ q_{(k,2)} $ also satisfies \cref{eq:det_R_formula} since
	\begin{align*}
		q_{(k,2)} &= \frac{\beta_{k-1} \beta_{k}}{\gamma_{k-2}^{(2)}\gamma_{k-1}^{(2)}} \left(\alpha_{k-1} \alpha_{k} - \beta_{k}^{2} - c_{k-3} \gamma_{k-2}^{(1)} \alpha_{k} + c_{k-3} \gamma_{k-1}^{(1)} \gamma_{k-2}^{(2)} \right) \\
		&= \frac{\beta_{k-1} \beta_{k}}{\gamma_{k-2}^{(2)}\gamma_{k-1}^{(2)}} \left( p_{(k,2)} - c_{k-3} \gamma_{k-2}^{(1)} p_{(k,1)} + c_{k-3} \gamma_{k-1}^{(1)} \gamma_{k-2}^{(2)} p_{(k,0)} \right) \\
		&= \left( \frac{p_{(k,2)}}{\prod_{i=1}^{2} \gamma_{k-i}^{(2)}} + c_{k-3} \sum_{i=1}^{2} \frac{(-1)^{i-1} \gamma_{k-i}^{(1)} p_{(k,i-1)}}{\prod_{j=1}^{i} \gamma_{k-j}^{(2)}} \right) \prod_{i=1}^{2} \beta_{k-i+1}.
	\end{align*}
	Now, suppose \cref{eq:det_R_formula} holds for $ q_{(k,l-1)}$ and $q_{(k,l-2)}$ for any $ 3 \leq l \leq k-1 $. By \cref{alg:MINRES,eq:c_s_r2}, we have
	\begin{align}
		\delta_{k-l+1}^{(2)} q_{(k,l-1)}  &=  \left( \frac{\alpha_{k-l+1} \beta_{k-l+1}  - c_{k-l-1} \gamma_{k-l}^{(1)} \beta_{k-l+1} }{\gamma_{k-l}^{(2)}} \right) \nonumber \\
		& \quad \times \left( \frac{p_{(k,l-1)}}{\prod_{i=1}^{l-1} \gamma_{k-i}^{(2)}} + c_{k-l} \sum_{i=1}^{l-1} \frac{(-1)^{l-i} \gamma_{k-i}^{(1)} p_{(k,i-1)}}{\prod_{j=1}^{i} \gamma_{k-j}^{(2)}} \right) \prod_{i=1}^{l-1} \beta_{k-i+1} \nonumber \\
		&= \left( \frac{\alpha_{k-l+1} - c_{k-l-1} \gamma_{k-l}^{(1)}}{\gamma_{k-l}^{(2)}} \right) \nonumber \\
		& \quad \times \left( \frac{p_{(k,l-1)}}{\prod_{i=1}^{l-1} \gamma_{k-i}^{(2)}} + c_{k-l} \sum_{i=1}^{l-1} \frac{(-1)^{l-i} \gamma_{k-i}^{(1)} p_{(k,i-1)}}{\prod_{j=1}^{i} \gamma_{k-j}^{(2)}} \right) \prod_{i=1}^{l} \beta_{k-i+1},
		\label{eq:delta_delta}
	\end{align}
	and
	\begin{align}
		\gamma_{k-l+1}^{(2)} \epsilon_{k-l+2} q_{(k,l-2)} &= \gamma_{k-l+1}^{(2)} \left( \frac{
			\beta_{k-l+1} \beta_{k-l+2}}{\gamma_{k-l}^{(2)}} \right) \nonumber \\
		& \quad \times \left( \frac{p_{(k,l-2)} }{\prod_{i=1}^{l-2} \gamma_{k-i}^{(2)}} + c_{k-l+1} \sum_{i=1}^{l-2} \frac{(-1)^{l-i-1} \gamma_{k-i}^{(1)} p_{(k,i-1)}}{\prod_{j=1}^{i} \gamma_{k-j}^{(2)}} \right) \prod_{i=1}^{l-2} \beta_{k-i+1} \nonumber \\
		&= \left( \frac{(\gamma_{k-l+1}^{(1)})^2 + \beta_{k-l+2}^2}{\gamma_{k-l+1}^{(2)} \gamma_{k-l}^{(2)}} \right) \nonumber \\
		& \quad \times \left( \frac{p_{(k,l-2)}}{\prod_{i=1}^{l-2} \gamma_{k-i}^{(2)}} + c_{k-l+1} \sum_{i=1}^{l-2} \frac{(-1)^{l-i-1} \gamma_{k-i}^{(1)} p_{(k,i-1)}}{\prod_{j=1}^{i} \gamma_{k-j}^{(2)}} \right) \prod_{i=1}^{l} \beta_{k-i+1}.
		\label{eq:gamma_epsilon}
	\end{align}
	
	We now verify that \cref{eq:det_R_formula} is indeed the difference of \cref{eq:delta_delta,eq:gamma_epsilon} and hence agrees with \cref{eq:det_iters}. Factoring out the common term ``$ \prod_{i=1}^{l} \beta_{k-i+1} $'', we now combine and simplify some terms in this difference. 
	
	We first note that 
	\begin{align}
		\label{eq:R_part1}  
		\frac{\alpha_{k-l+1}}{\gamma_{k-l}^{(2)}} \frac{p_{(k,l-1)}}{\prod_{i=1}^{l-1} \gamma_{k-i}^{(2)}} - \frac{\beta_{k-l+2}^2}{\gamma_{k-l+1}^{(2)} \gamma_{k-l}^{(2)}} \frac{p_{(k,l-2)}}{\prod_{i=1}^{l-2} \gamma_{k-i}^{(2)}} = \frac{\alpha_{k-l+1} p_{(k,l-1)} - \beta_{k-l+2}^2 p_{(k,l-2)}}{\prod_{i=1}^{l} \gamma_{k-i}^{(2)}} = \frac{p_{(k,l)}}{\prod_{i=1}^{l} \gamma_{k-i}^{(2)}},
	\end{align}
	and
	\begin{align}   
		\label{eq:R_part2}  
		\frac{- c_{k-l-1} \gamma_{k-l}^{(1)}}{\gamma_{k-l}^{(2)}} \frac{p_{(k,l-1)}}{\prod_{i=1}^{l-1} \gamma_{k-i}^{(2)}} = \frac{- c_{k-l-1} \gamma_{k-l}^{(1)} p_{(k,l-1)}}{\prod_{i=1}^{l} \gamma_{k-i}^{(2)}}.
	\end{align}
	%
	We have
	\begin{align*}
		&\quad - \left( \alpha_{k-l+1} - c_{k-l-1} \gamma_{k-l}^{(1)} \right) c_{k-l} \gamma_{k-l+1}^{(1)} - (\gamma_{k-l+1}^{(1)})^2 \\
		&= \gamma_{k-l+1}^{(1)} \left( -c_{k-l} \alpha_{k-l+1} + c_{k-l} c_{k-l-1} \gamma_{k-l}^{(1)} - \left( - c_{k-l-1} s_{k-l} \beta_{k-l+1} - c_{k-l} \alpha_{k-l+1} \right) \right) \\
		&= c_{k-l-1} \gamma_{k-l+1}^{(1)} \left( c_{k-l} \gamma_{k-l}^{(1)} + s_{k-l} \beta_{k-l+1} \right) = c_{k-l-1} \gamma_{k-l+1}^{(1)} \frac{(\gamma_{k-l}^{(1)})^2 + \beta_{k-l+1}^2}{\gamma_{k-l}^{(2)}} = c_{k-l-1} \gamma_{k-l+1}^{(1)} \gamma_{k-l}^{(2)}.
	\end{align*}
	This gives
	\begin{align}
		\label{eq:R_part3}  
		\left(\frac{\alpha_{k-l+1} - c_{k-l-1} \gamma_{k-l}^{(1)}}{\gamma_{k-l}^{(2)}}\right) \left(\frac{-c_{k-l} \gamma_{k-l+1}^{(1)} p_{(k,l-2)}}{\prod_{j=1}^{l-1} \gamma_{k-j}^{(2)}}\right) - \frac{(\gamma_{k-l+1}^{(1)})^2}{\gamma_{k-l+1}^{(2)} \gamma_{k-l}^{(2)}} \frac{p_{(k,l-2)}}{\prod_{i=1}^{l-2} \gamma_{k-i}^{(2)}} = \frac{c_{k-l-1} \gamma_{k-l+1}^{(1)} p_{(k,l-2)}}{\prod_{i=1}^{l-1} \gamma_{k-i}^{(2)}}.
	\end{align}
	Finally, for any $ 1 \leq i \leq l-2 $, we have 
	\begin{align}   
		& \quad \left(\frac{\alpha_{k-l+1} - c_{k-l-1} \gamma_{k-l}^{(1)}}{\gamma_{k-l}^{(2)}}\right) \left(\frac{c_{k-l} (-1)^{l-i} \gamma_{k-i}^{(1)} p_{(k,i-1)}}{\prod_{j=1}^{i} \gamma_{k-j}^{(2)}}\right) - \left( \frac{\gamma_{k-l+1}^{(2)}}{\gamma_{k-l}^{(2)}} \right) \frac{c_{k-l+1} (-1)^{l-i-1} \gamma_{k-i}^{(1)} p_{(k,i-1)}}{\prod_{j=1}^{i} \gamma_{k-j}^{(2)}} \nonumber \\
		&= \left( \frac{- c_{k-l} \alpha_{k-l+1} + c_{k-l} c_{k-l-1} \gamma_{k-l}^{(1)}}{\gamma_{k-l}^{(2)}} - \frac{c_{k-l+1} \gamma_{k-l+1}^{(2)}}{\gamma_{k-l}^{(2)}} \right) \frac{ (-1)^{l-i+1} \gamma_{k-i}^{(1)} p_{(k,i-1)}}{\prod_{j=1}^{i} \gamma_{k-j}^{(2)}} \nonumber \\
		&= \left( \frac{- c_{k-l} \alpha_{k-l+1} + c_{k-l} c_{k-l-1} \gamma_{k-l}^{(1)} - \gamma_{k-l+1}^{(1)}}{\gamma_{k-l}^{(2)}}\right) \frac{ (-1)^{l-i+1} \gamma_{k-i}^{(1)} p_{(k,i-1)}}{\prod_{j=1}^{i} \gamma_{k-j}^{(2)}} \nonumber \\
		&= \left( \frac{- c_{k-l} \alpha_{k-l+1} + c_{k-l} c_{k-l-1} \gamma_{k-l}^{(1)} - \left( - c_{k-l-1} s_{k-l} \beta_{k-l+1} - c_{k-l} \alpha_{k-l+1} \right)}{c_{k-l} \gamma_{k-l}^{(1)} + s_{k-l} \beta_{k-l+1}}\right) \frac{ (-1)^{l-i+1} \gamma_{k-i}^{(1)} p_{(k,i-1)}}{\prod_{j=1}^{i} \gamma_{k-j}^{(2)}} \nonumber \\
		&= \left( \frac{c_{k-l-1} (c_{k-l} \gamma_{k-l}^{(1)} + s_{k-l} \beta_{k-l+1})}{c_{k-l} \gamma_{k-l}^{(1)} + s_{k-l} \beta_{k-l+1}}\right) \frac{ (-1)^{l-i+1} \gamma_{k-i}^{(1)} p_{(k,i-1)}}{\prod_{j=1}^{i} \gamma_{k-j}^{(2)}} = c_{k-l-1} \frac{(-1)^{l-i+1} \gamma_{k-i}^{(1)} p_{(k,i-1)}}{\prod_{j=1}^{i} \gamma_{k-j}^{(2)}}. \label{eq:R_part4}
	\end{align} 
	
	Now, summing up \cref{eq:R_part1,eq:R_part2,eq:R_part3,eq:R_part4}, and putting ``$ \prod_{i=1}^{l} \beta_{k-i+1} $'' factor back in, we obtain \cref{eq:det_R_formula}. 
	The claim follows by noticing that from \cref{thm:determinant_T}, \cref{eq:R_entries,eq:c_gama_tau_c}, all of \cref{eq:R_part1,eq:R_part2,eq:R_part3,eq:R_part4} evaluate to be positive. Furthermore, by \cref{eq:alpha_beta}, we also have $ \prod_{i=1}^{l} \beta_{k-i+1} > 0 $. Putting this all together, we get the desired result.
\end{proof}

\cref{lemma:tau_dTr} allow us to make a statement about the ``angle'' between the update vector $ \tau_{k}\dd_{k} $ and the residual vectors, which is used in the proofs of \cref{thm:A_norm,thm:xTr_monotonicity}.

\begin{lemma}
	\label{lemma:tau_dTr}
	Let $ g $ be the grade of $ \bb $ with respect to $ \AA $ as in \cref{def:grade_b}. As long as the NPC condition \cref{eq:c_gama_nc} has not been detected for $1 \leq k < g $, we have
	\begin{align*}
		\tau_{k} \dd_{k}^{\T} \rr_{k-j} > 0, \quad 0 \leq j < k.
	\end{align*}
	Furthermore, if $ \gamma^{(2)}_{g} \neq 0 $ and the NPC condition \cref{eq:c_gama_nc} has not been detected for all $ g $ iterations, the conclusion continues to hold for $ k = g $ and $ 1 \leq j < g $.
\end{lemma}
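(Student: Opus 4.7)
The plan is to substitute the explicit expansion of $\dd_{k}$ into the inner product and then track the sign of every surviving summand. By \cref{lemma:tau_d},
\begin{align*}
\tau_{k}\dd_{k}^{\T}\rr_{k-j} \;=\; \sum_{l=0}^{k-1} \frac{(-1)^{l}\, q_{(k,l)}}{\prod_{\ell=0}^{l}\gamma_{k-l+\ell}^{(2)}}\; \tau_{k}\vv_{k-l}^{\T}\rr_{k-j}.
\end{align*}
So the whole task reduces to determining the sign of each term in this sum.

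First I would observe that many terms vanish on orthogonality grounds. Unrolling the residual recursion $\rr_{m} = s_{m}^{2}\rr_{m-1} - \phi_{m} c_{m}\vv_{m+1}$ from \cref{eq:rk_rec} shows $\rr_{k-j}\in\Span\{\vv_{1},\ldots,\vv_{k-j+1}\}$, so by orthogonality of the Lanczos vectors, $\vv_{k-l}^{\T}\rr_{k-j}=0$ whenever $k-l>k-j+1$, i.e.\ whenever $l\le j-2$. Hence only the indices $l\ge \max\{0,j-1\}$ contribute, and for those indices the condition $j\le l+1$ required by \cref{lemma:tvr} is exactly satisfied.

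Next I would combine three sign facts on the surviving terms. \cref{lemma:tvr} gives $(-1)^{l}\tau_{k}\vv_{k-l}^{\T}\rr_{k-j}>0$, so the factor $\tau_{k}\vv_{k-l}^{\T}\rr_{k-j}$ has sign $(-1)^{l}$. The coefficient $(-1)^{l}q_{(k,l)}/\prod_{\ell=0}^{l}\gamma_{k-l+\ell}^{(2)}$ also carries sign $(-1)^{l}$, because $q_{(k,0)}=1$ and $q_{(k,l)}>0$ for $1\le l\le k-1$ by \cref{thm:determinants}, while $\gamma_{k-l+\ell}^{(2)}>0$ by \cref{eq:R_entries} in \cref{lemma:basic}. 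Multiplying these two sign contributions together yields $(-1)^{2l}=+1$, so each nonvanishing summand is strictly positive. Since the index $l=k-1$ is always among the surviving indices (and contributes a strictly positive term), the sum itself is strictly positive.

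The extension to $k=g$ under the additional assumption $\gamma_{g}^{(2)}\ne 0$ is immediate: the expansion of $\dd_{g}$ in \cref{lemma:tau_d} and the sign statement in \cref{lemma:tvr} both remain valid in that regime (the range $1\le j<g$ keeps $\phi_{k-j-1}>0$, which is all the proof uses), so the same argument goes through unchanged. The only real obstacle here is bookkeeping — making sure the index $l$ at which $\vv_{k-l}^{\T}\rr_{k-j}$ first becomes non‑orthogonal aligns precisely with the range $j\le l+1$ permitted by \cref{lemma:tvr}; once that alignment is verified, the rest is a direct assembly of signs already established.
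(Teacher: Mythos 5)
Your proof is correct and takes essentially the same route as the paper's: expand $\tau_k\dd_k^{\T}\rr_{k-j}$ via \cref{lemma:tau_d}, drop the terms that vanish by the Lanczos orthogonality $\vv_{k-l}\perp\rr_{k-j}$ for $l<j-1$, and conclude positivity of the surviving summands from \cref{lemma:tvr}, \cref{thm:determinants}, and \cref{eq:R_entries}. You supply more explicit bookkeeping (the sign cancellation $(-1)^{2l}=+1$ and the alignment of the surviving range $l\ge j-1$ with the hypothesis $j\le l+1$ of \cref{lemma:tvr}), but the underlying argument and the lemmas invoked are the same as in the paper.
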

\begin{proof}
	This easily follows from \cref{lemma:tvr,lemma:tau_d},\cref{thm:determinants}, and \cref{eq:R_entries}, because
	\begin{align*}
		\tau_{k} \dd_{k}^{\T} \rr_{k-j} = \tau_{k} \sum_{i=0}^{k-1} \frac{(-1)^{i} q_{(k,i)} }{\prod_{\ell=0}^{i} \gamma_{k-i+\ell}^{(2)}} \vv_{k-i}^{\T} \rr_{k-j} > 0,
	\end{align*}
	where we also used the fact that $ \vv_{k-i} \perp \rr_{k-j}, \; i < j - 1 $.
\end{proof}

The following technical lemmas (\cref{lemma:tvtd,lemma:tau_dTx,lemma:tau_dTb}) will be used in the proof of \cref{thm:A_norm}. 
\begin{lemma}
	\label{lemma:tvtd}
	Let $ g $ be the grade of $ \bb $ with respect to $ \AA $  as in \cref{def:grade_b}. As long as the NPC condition \cref{eq:c_gama_nc} has not been detected for $1 \leq k < g $, we have
	\begin{align*}
		(-1)^{i} \tau_{k} \tau_{k-j} \dd_{k-j}^{\T} \vv_{k-i} > 0, \quad 0 \leq j \leq i < k.
	\end{align*}
	Furthermore, if $ \gamma^{(2)}_{g} \neq 0 $ and the NPC condition \cref{eq:c_gama_nc} has not been detected for all $ g $ iterations, the conclusion continues to hold for $ k = g $.
\end{lemma}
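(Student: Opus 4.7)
The plan is to reduce the claim to a direct sign computation by expanding $\dd_{k-j}$ via \cref{lemma:tau_d} and then picking out a single surviving term by orthogonality of the Lanczos vectors.

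Specifically, I would first apply \cref{lemma:tau_d} to $\dd_{k-j}$ (which is legitimate since $k-j \leq k < g$ in the first case, and since $\gamma^{(2)}_{g} \neq 0$ in the boundary case) to obtain
\begin{align*}
\dd_{k-j} = \sum_{l=0}^{k-j-1} \frac{(-1)^{l} q_{(k-j,l)}}{\prod_{\ell=0}^{l} \gamma_{k-j-l+\ell}^{(2)}} \vv_{k-j-l}.
\end{align*}
Taking the inner product with $\vv_{k-i}$ and using orthonormality of the Lanczos basis, only the term corresponding to $l = i-j$ survives (note $0 \leq i-j \leq i < k$, and $i - j \leq k - j - 1$ since $i < k$, so this index is in the valid range), giving
\begin{align*}
\dd_{k-j}^{\T} \vv_{k-i} = \frac{(-1)^{i-j} q_{(k-j,i-j)}}{\prod_{\ell=0}^{i-j} \gamma_{k-i+\ell}^{(2)}}.
\end{align*}

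Next I would analyze the sign. Multiplying by $(-1)^{i} \tau_{k} \tau_{k-j}$ produces
\begin{align*}
(-1)^{i} \tau_{k} \tau_{k-j} \dd_{k-j}^{\T} \vv_{k-i} = (-1)^{j}\, \tau_{k} \tau_{k-j} \cdot \frac{q_{(k-j,i-j)}}{\prod_{\ell=0}^{i-j} \gamma_{k-i+\ell}^{(2)}},
\end{align*}
since $(-1)^{i+(i-j)} = (-1)^{-j} = (-1)^{j}$. By \cref{eq:c_gama_tau_c} (with $c_{0} = -1$), the sign of $\tau_{m}$ alternates with the parity of $m$; hence $\tau_{k}$ and $\tau_{k-j}$ share the same sign when $j$ is even and opposite signs when $j$ is odd, so $(-1)^{j} \tau_{k} \tau_{k-j} > 0$ in all cases (the individual factors are nonzero because no NPC condition has triggered).

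Finally, I would invoke \cref{thm:determinants} to conclude $q_{(k-j,i-j)} > 0$ (using the convention $q_{(\cdot,0)} = 1$ for the diagonal case $i=j$) and \cref{eq:R_entries} to conclude $\gamma_{k-i+\ell}^{(2)} > 0$ for every $\ell$ in the product. Combining these sign facts with the identity above yields the desired strict positivity. The proof is essentially a bookkeeping exercise; the only mild subtlety is keeping track of the parity cancellations, which is resolved cleanly by the identity $(-1)^{i+(i-j)} = (-1)^{j}$ together with the parity behavior of $\tau_{m}$ established in \cref{lemma:basic}.
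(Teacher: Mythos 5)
Your proof is correct and follows essentially the same route as the paper's: expand $\dd_{k-j}$ via \cref{lemma:tau_d}, use Lanczos orthonormality to isolate the $l=i-j$ term, and then combine \cref{eq:c_gama_tau_c}, \cref{thm:determinants}, and \cref{eq:R_entries} to conclude strict positivity. The only cosmetic difference is in verifying $(-1)^{j}\tau_{k}\tau_{k-j}>0$ --- you argue directly from the parity alternation of $\tau_{m}$ (taking $i=0$ in \cref{eq:c_gama_tau_c}), whereas the paper rewrites $\tau_{k-j}=c_{k-j}\phi_{k-j-1}$ and applies \cref{eq:c_gama_tau_c} with $i=k-j$; the two are equivalent.
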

\begin{proof}
	By \cref{lemma:tau_d}, we have
	\begin{align*}
		\vv_{k-i}^{\T}\dd_{k-j} = \vv_{k-i}^{\T} \left( \sum_{h=0}^{k-j-1} \frac{(-1)^{h} q_{(k-j,h)} }{\prod_{\ell=0}^{h} \gamma_{k-j-h+\ell}^{(2)}} \vv_{k-j-h} \right) = \frac{(-1)^{i-j} q_{(k-j,i-j)} }{\prod_{\ell=0}^{i-j} \gamma_{k-i+\ell}^{(2)}}.
	\end{align*}
	Also, by \cref{eq:c_gama_tau_c} and the definition of $ \tau_{k-j} $ in \cref{alg:MINRES}, we have
	\begin{align*}
		(-1)^{-j} \tau_{k} \tau_{k-j} = \phi_{k-j-1} (-1)^{j} c_{k-j} \tau_{k} > 0.
	\end{align*}
	Hence, from \cref{thm:determinants,eq:R_entries}, it follows that
	\begin{align*}
		(-1)^{i} \tau_{k} \tau_{k-j} \dd_{k-j}^{\T} \vv_{k-i} = (-1)^{i} \tau_{k} \frac{q_{(k-j,i-j)}}{\prod_{\ell=0}^{i-j} \gamma_{k-i+\ell}^{(2)}} (-1)^{i-j} \tau_{k-j} = (-1)^{-j} \tau_{k} \tau_{k-j} \frac{q_{(k-j,i-j)}}{\prod_{\ell=0}^{i-j} \gamma_{k-i+\ell}^{(2)}} > 0.
	\end{align*}
\end{proof}

\begin{lemma}
	\label{lemma:tau_dTx}
	Let $ g $ be the grade of $ \bb $ with respect to $ \AA $  as in \cref{def:grade_b}. As long as the NPC condition \cref{eq:c_gama_nc} has not been detected for $1 \leq k < g $, we have
	\begin{align*}
		\tau_{k} \dd_{k}^{\T} \xx_{k-j} > 0, \quad 0 \leq j < k.
	\end{align*}
	Furthermore, if $ \gamma^{(2)}_{g} \neq 0 $ and the NPC condition \cref{eq:c_gama_nc} has not been detected for all $ g $ iterations, the conclusion continues to hold for $ k = g $.
\end{lemma}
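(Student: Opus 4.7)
The plan is to reduce the statement to the pairwise inner products $\tau_{k}\tau_{m}\dd_{k}^{\T}\dd_{m}$ and then establish the sign of each such term using the explicit expansion of $\dd_{k}$ in the Lanczos basis from \cref{lemma:tau_d}, together with \cref{lemma:tvtd} and the positivity of the minors in \cref{thm:determinants}.

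First I would use the iterate recursion $\xx_{n} = \xx_{n-1} + \tau_{n}\dd_{n}$, which with $\xx_{0} = \zero$ telescopes to
\begin{align*}
\xx_{k-j} \;=\; \sum_{m=1}^{k-j} \tau_{m}\,\dd_{m},
\end{align*}
so that
\begin{align*}
\tau_{k}\,\dd_{k}^{\T}\xx_{k-j} \;=\; \sum_{m=1}^{k-j} \tau_{k}\tau_{m}\,\dd_{k}^{\T}\dd_{m}.
\end{align*}
Since every index $m$ in this sum satisfies $1 \leq m \leq k-1$, it suffices to show that each summand $\tau_{k}\tau_{m}\dd_{k}^{\T}\dd_{m}$ is strictly positive.

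Next I would substitute the explicit expansion of $\dd_{k}$ from \cref{lemma:tau_d} into one of the two factors, obtaining
\begin{align*}
\tau_{k}\tau_{m}\,\dd_{k}^{\T}\dd_{m} \;=\; \sum_{l=0}^{k-1} \frac{(-1)^{l} q_{(k,l)}}{\prod_{\ell=0}^{l} \gamma_{k-l+\ell}^{(2)}}\,\tau_{k}\tau_{m}\,\dd_{m}^{\T}\vv_{k-l}.
\end{align*}
Because $\dd_{m} \in \mathcal{K}_{m}(\AA,\bb) = \Span\{\vv_{1},\ldots,\vv_{m}\}$ and the Lanczos vectors are orthogonal, only the terms with $k - l \leq m$, i.e.\ $l \geq k-m$, contribute. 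For each such surviving $l$, \cref{lemma:tvtd} (with $j$ there set to $k-m$ and $i$ there set to $l$) gives $(-1)^{l}\tau_{k}\tau_{m}\dd_{m}^{\T}\vv_{k-l} > 0$, while \cref{thm:determinants} together with \labelcref{eq:R_entries} in \cref{lemma:basic} ensure that both $q_{(k,l)}$ and $\prod_{\ell=0}^{l}\gamma_{k-l+\ell}^{(2)}$ are strictly positive. So every term in the sum has a positive sign, and the summands $\tau_{k}\tau_{m}\dd_{k}^{\T}\dd_{m}$ are all positive, yielding $\tau_{k}\dd_{k}^{\T}\xx_{k-j} > 0$.

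The extension to $k = g$ under the extra hypothesis $\gamma_{g}^{(2)} \neq 0$ is essentially free, since each of the auxiliary results (\cref{lemma:tau_d}, \cref{lemma:tvtd}, \cref{thm:determinants}, and \cref{lemma:basic}) is already stated to cover the $k=g$ case under this assumption; one simply repeats the argument verbatim. The only subtle point that needs care is the bookkeeping of indices when matching the $(i,j)$ appearing in \cref{lemma:tvtd} to the $l$ and $k-m$ appearing in the present argument, and verifying that the range $k-m \leq l \leq k-1$ falls inside the allowable range $0 \leq j \leq i < k$ of that lemma; this is the only place where a misstep could occur, but it is a direct check rather than a conceptual obstacle.
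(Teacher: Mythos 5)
Your proof is correct and takes essentially the same route as the paper: both rest on expanding $\dd_{k}$ in the Lanczos basis via \cref{lemma:tau_d}, discarding the orthogonal terms, and invoking \cref{lemma:tvtd}, \cref{thm:determinants} and \cref{eq:R_entries} to sign the remaining double sum. The only difference is cosmetic ordering — you telescope $\xx_{k-j}=\sum_{m}\tau_{m}\dd_{m}$ first and then expand $\dd_{k}$, whereas the paper expands $\dd_{k}$ first and telescopes second — and your index match for \cref{lemma:tvtd} (setting its $j=k-m$, $i=l$, with $k-m\le l\le k-1$) falls squarely inside that lemma's allowable range $0\le j\le i<k$.
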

\begin{proof}
	By \cref{lemma:tau_d} and noting that $ \xx_{k} \in \mathcal{K}_{k}(\AA, \bb) \perp \vv_{k+1}$, we have
	\begin{align*}
		\tau_{k} \dd_{k}^{\T} \xx_{k-j} &= \sum_{i=0}^{k-1} \frac{(-1)^{i} \tau_{k} q_{(k,i)}}{\prod_{\ell=0}^{i} \gamma_{k-i+\ell}^{(2)}}  \vv_{k-i}^{\T} \xx_{k-j} = \sum_{i=j}^{k-1} \frac{(-1)^{i} \tau_{k} q_{(k,i)}}{\prod_{\ell=0}^{i} \gamma_{k-i+\ell}^{(2)}} \vv_{k-i}^{\T} \xx_{k-j} \\
		&= \sum_{i=j}^{k-1} \frac{(-1)^{i} \tau_{k} q_{(k,i)}}{\prod_{\ell=0}^{i} \gamma_{k-i+\ell}^{(2)}} \vv_{k-i}^{\T} \left( \sum_{h=1}^{k-j} \tau_{h} \dd_{h} \right) = \sum_{i=j}^{k-1} \frac{(-1)^{i} \tau_{k} q_{(k,i)}}{\prod_{\ell=0}^{i} \gamma_{k-i+\ell}^{(2)}}  \left( \sum_{h=1}^{k-j} \tau_{h} \vv_{k-i}^{\T} \dd_{h} \right).
	\end{align*}
	Again from \cref{lemma:tau_d}, we have
	\begin{align*}
		\vv_{k-i}^{\T}\dd_{h} = \vv_{k-i}^{\T} \left( \sum_{\ell=0}^{h-1} \frac{(-1)^{\ell} q_{(h,\ell)} }{\prod_{\ell=0}^{\ell} \gamma_{h-\ell+\ell}^{(2)}} \vv_{h-\ell} \right) = 0, \quad h < k-i,
	\end{align*}
	which implies
	\begin{align*}
		\tau_{k} \dd_{k}^{\T} \xx_{k-j} = \sum_{i=j}^{k-1} \frac{(-1)^{i} \tau_{k} q_{(k,i)}}{\prod_{\ell=0}^{i} \gamma_{k-i+\ell}^{(2)}}  \left( \sum_{h=k-i}^{k-j} \tau_{h} \vv_{k-i}^{\T} \dd_{h} \right) = \sum_{i=j}^{k-1} \sum_{h=k-i}^{k-j} \frac{q_{(k,i)}}{\prod_{\ell=0}^{i} \gamma_{k-i+\ell}^{(2)}} \left( (-1)^{i} \tau_{k} \tau_{h} \dd_{h}^{\T} \vv_{k-i} \right)  > 0,
	\end{align*}
	where the last inequality follows from \cref{thm:determinants}, \cref{eq:R_entries}, and \cref{lemma:tvtd}.
\end{proof}

\begin{lemma}
	\label{lemma:tau_dTb}
	Let $ g $ be the grade of $ \bb $ with respect to $ \AA $ as in \cref{def:grade_b}. As long as the NPC condition \cref{eq:c_gama_nc} has not been detected for $1 \leq k < g $, we have
	\begin{align*}
		\tau_{k} \dd_{k}^{\T} \bb > 0.
	\end{align*}
	Furthermore, if $ \gamma^{(2)}_{g} \neq 0 $ and the NPC condition \cref{eq:c_gama_nc} has not been detected for all $ g $ iterations, the conclusion continues to hold for $ k = g $.
\end{lemma}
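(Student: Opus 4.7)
The plan is to exploit the fact that $\bb$ is proportional to the first Lanczos vector and then read off the coefficient of $\vv_{1}$ in the expansion of $\dd_{k}$ provided by \cref{lemma:tau_d}. By the initialization in \cref{alg:MINRES}, $\bb = \beta_{1} \vv_{1}$ with $\beta_{1} = \vnorm{\bb} > 0$, so
\begin{align*}
\tau_{k} \dd_{k}^{\T} \bb = \beta_{1} \tau_{k} \dd_{k}^{\T} \vv_{1}.
\end{align*}

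Next, I would invoke \cref{lemma:tau_d} to expand $\dd_{k}$ in the orthonormal basis $\{\vv_{1}, \ldots, \vv_{k}\}$. Picking out the $\vv_{1}$ component corresponds to the index $l = k-1$, so orthonormality gives
\begin{align*}
\dd_{k}^{\T} \vv_{1} = \frac{(-1)^{k-1} q_{(k,k-1)}}{\prod_{\ell=1}^{k} \gamma_{\ell}^{(2)}}.
\end{align*}
Substituting back, the quantity of interest becomes
\begin{align*}
\tau_{k} \dd_{k}^{\T} \bb = \beta_{1} \, \frac{(-1)^{k-1} \tau_{k} \, q_{(k,k-1)}}{\prod_{\ell=1}^{k} \gamma_{\ell}^{(2)}}.
\end{align*}

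It now remains to verify that each factor on the right is positive. By \cref{thm:determinants}, as long as the NPC condition \cref{eq:c_gama_nc} has not been detected, $q_{(k,k-1)} > 0$; by \cref{eq:R_entries}, each $\gamma_{\ell}^{(2)} > 0$; and $\beta_{1} > 0$ by definition. For the sign of $(-1)^{k-1} \tau_{k}$, I would appeal to \cref{eq:c_gama_tau_c} with $i = 0$, using $c_{0} = -1$ from \cref{alg:MINRES}, which gives $(-1)^{k} c_{0} \tau_{k} = (-1)^{k-1} \tau_{k} > 0$. Combining these yields $\tau_{k} \dd_{k}^{\T} \bb > 0$, which is the claim for $1 \leq k < g$.

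For the extension to $k = g$ when $\gamma_{g}^{(2)} \neq 0$, I do not expect any difficulty: the hypotheses of \cref{lemma:tau_d} and \cref{thm:determinants} remain valid in that regime (as already noted in their statements), so the same computation carries through verbatim. The only subtlety worth flagging is that the sign bookkeeping in \cref{eq:c_gama_tau_c} crucially relies on $c_{0} = -1$ and on the NPC condition failing at every prior step; otherwise the alternating-sign pattern of $\tau_{k}$ breaks down. This is the one place where I would double-check the indices carefully, but it is a routine verification rather than a genuine obstacle.
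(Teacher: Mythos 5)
Your proof is correct and matches the paper's own argument essentially line for line: both isolate the $\vv_{1}$-component of $\dd_{k}$ via \cref{lemma:tau_d}, then certify the sign of each factor using \cref{thm:determinants}, \cref{eq:R_entries}, and \cref{eq:c_gama_tau_c} with $c_{0}=-1$. The only cosmetic difference is your re-indexing of the product $\prod_{\ell=0}^{k-1}\gamma_{\ell+1}^{(2)}$ as $\prod_{\ell=1}^{k}\gamma_{\ell}^{(2)}$, which is harmless.
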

\begin{proof}
	By \cref{lemma:tau_d}, we have
	\begin{align*}
		\tau_{k} \dd_{k}^{\T} \bb &= \beta_{1} \left(\sum_{i=0}^{k-1} \frac{q_{(k,i)}}{\prod_{\ell=0}^{i} \gamma_{k-i+\ell}^{(2)}} (-1)^{i} \tau_{k} \vv_{k-i} \right)^{\T} \vv_{1} \\
		&= \frac{q_{(k,k-1)}}{\prod_{\ell=0}^{k-1} \gamma_{\ell+1}^{(2)}} (-1)^{k-1} \tau_{k} \vv_{1}^{\T} \vv_{1} = \frac{\beta_{1} q_{(k,k-1)}}{\prod_{\ell=0}^{k-1} \gamma_{\ell+1}^{(2)}} (-1)^{k} c_{0} \tau_{k} > 0,
	\end{align*}
	where the last inequality comes from \cref{eq:c_gama_tau_c}.
\end{proof}

	\bibliographystyle{plain}
	\bibliography{biblio}

\end{document}